\let\frak\mathfrak
\def\>{\relax\ifmmode\mskip.666667\thinmuskip\relax\else\kern.111111em\fi}
\def\<{\relax\ifmmode\mskip-.333333\thinmuskip\relax\else\kern-.0555556em\fi}
\def\vsk#1>{\vskip#1\baselineskip}
\def\vv#1>{\vadjust{\vsk#1>}\ignorespaces}
\def\vvn#1>{\vadjust{\nobreak\vsk#1>\nobreak}\ignorespaces}
\def\fratop{\genfrac{}{}{0pt}1}
\def\satop#1#2{\fratop{\scriptstyle#1}{\scriptstyle#2}}
  \let\ssize\scriptstyle
\let\sssize\scriptscriptstyle
\let\Medskip\medskip
\def\medskip{\par\Medskip}
\let\Bigskip\bigskip
\def\bigskip{\par\Bigskip}
\let\Maketitle\maketitle
\def\maketitle{\Maketitle\thispagestyle{empty}\let\maketitle\empty}
\newtheorem{thm}{Theorem}[section]
\newtheorem{cor}[thm]{Corollary}
\newtheorem{lem}[thm]{Lemma}
\newtheorem{conj}[thm]{Conjecture}
\newtheorem{defn}[thm]{Definition}
\theoremstyle{definition}                                  
\numberwithin{equation}{section}
\theoremstyle{definition}
\newtheorem*{rem}{Remark}
\let\mc\mathcal
\let\nc\newcommand
\let\al\alpha
\let\ka\kappa
\let\la\lambda
\let\La\Lambda
\let\phi\varphi
\let\si\sigma
\let\om\omega
\let\Om\Omega
\let\der\partial
\let\ox\otimes
\let\ge\geqslant
\let\geq\geqslant
\let\le\leqslant
\let\leq\leqslant
\let\on\operatorname
\let\bi\bibitem
\let\bs\boldsymbol
\def\C{{\mathbb C}}
\def\Z{{\mathbb Z}}
\def\R{{\mathbb R}}
\def\F{{\mathbb F}}   
\def\+#1{^{\{#1\}}}
\def\beq{\begin{equation}}
\def\eeq{\end{equation}}
\def\be{\begin{equation*}}
\def\ee{\end{equation*}}
\nc{\bea}{\begin{eqnarray*}}
\nc{\eea}{\end{eqnarray*}}
\nc{\bean}{\begin{eqnarray}}
\nc{\eean}{\end{eqnarray}}
\def\g{{\mathfrak g}}
\let\ga\gamma
\nc{\Il}{{\mc I_{\bs\la}}}
\nc{\bla}{{\bs\la}}
\nc{\Fla}{\F_\bla}
\nc{\tfl}{{T^*\Fla}}
\nc{\GL}{{GL_n(\C)}}
\nc{\GLC}{{GL_n(\C)\times\C^*}}
\let\sd s 
\def\ddk_#1{\kk_{#1}\<\>\frac\der{\der\<\>\kk_{#1}}}
\def\bul{\mathbin{\raise.2ex\hbox{$\sssize\bullet$}}}
\def\intt{\mathchoice
{\mathop{\raise.2ex\rlap{$\,\,\ssize\backslash$}{\intop}}\nolimits}
{\mathop{\raise.3ex\rlap{$\,\sssize\backslash$}{\intop}}\nolimits}
{\mathop{\raise.1ex\rlap{$\sssize\>\backslash$}{\intop}}\nolimits}
{\mathop{\rlap{$\sssize\<\>\backslash$}{\intop}}\nolimits}}
\let\kk q 
\let\cc c
\let\Ko K
\def\GZ/{Gelfand-Zetlin}
\def\KZ/{{\slshape KZ\/}}
\def\qKZ/{{\slshape qKZ\/}}
\def\XXX/{{\slshape XXX\/}}
\def\Sym{\on{Sym}}
\nc{\A}{{\mc C}}
\def\sll{{\frak{sl}}}
\def\Q{{\mathbb Q}}
\nc{\hsl}{\widehat{{\frak{sl}_2}}}
\nc{\BC}{{ \mathbb C}}
\nc{\lra}{\longrightarrow}
\nc{\CO}{{\mathcal{O}}}
\nc{\BZ}{{ \mathbb Z}}
\nc{\hfn}{\hat{\frak{n}}}
\nc\Zs{{\Z/p^s\Z}}
\nc\Zo{{\Zs[z]^0}}
\nc\gr{{\on{gr}}}
\nc\fD{{\frak D}}
\begin{document}

\hrule width0pt
\vsk->

\title[Ghosts and congruences]
{Ghosts and congruences for $p^s$-approximations 
\\
of hypergeometric periods}

\author[A.\:Varchenko and W.\,Zudilin]
{Alexander Varchenko$^{\star}$ and Wadim Zudilin$^{\diamond}$}

\maketitle

\begin{center}
{\it $^{\star}$ Department of Mathematics, University
of North Carolina at Chapel Hill\\ Chapel Hill, NC 27599-3250, USA\/}

\vsk.5>
{\it $^{ \star}$ Faculty of Mathematics and Mechanics, Lomonosov Moscow State
University\\ Leninskiye Gory 1, 119991 Moscow GSP-1, Russia\/}

\vsk.5>
 {\it $^{ \star}$ Moscow Center of Fundamental and Applied Mathematics
\\ Leninskiye Gory 1, 119991 Moscow GSP-1, Russia\/}

\vsk.5>
 {\it $^{ \diamond}$ 
Institute for Mathematics, Astrophysics and Particle Physics, Radboud University,
\\
 PO Box 9010, 6500 GL, Nijmegen, The Netherlands\/}

\end{center}

\vsk>
{\leftskip3pc \rightskip\leftskip \parindent0pt \Small
{\it Key words\/}:  Hypergeometric equation; KZ equations; Dwork congruences; master polynomials; $p^s$-approximation polynomials.

\vsk.6>
{\it 2020 Mathematics Subject Classification\/}: 11D79 (12H25, 32G34, 33C05, 33E30)
\par}

{\let\thefootnote\relax
\footnotetext{\vsk-.8>\noindent
$^\star\<${\sl E\>-mail}:\enspace anv@email.unc.edu,
supported in part by NSF grant DMS-1954266
\\
$^\diamond\<${\sl E\>-mail}:\enspace  w.zudilin@math.ru.nl
}}

\begin{abstract}

We prove general Dwork-type congruences for constant terms attached to tuples of Laurent polynomials.
We apply this result to establishing arithmetic and $p$-adic analytic properties of functions
 originating from polynomial solutions modulo $p^s$ of hypergeometric and KZ equations, 
 solutions which come as coefficients of master 
polynomials and whose coefficients are integers. 

As an application we show that the simplest example of a $p$-adic KZ connection has an invariant line subbundle 
while its complex analog has no  nontrivial subbundles due to the irreducibility  of its monodromy representation.

\end{abstract}

{\small\tableofcontents\par}

\setcounter{footnote}{0}
\renewcommand{\thefootnote}{\arabic{footnote}}

\section{Introduction}

In the seminal work \cite{Dw} Dwork laid the foundation of the theory 
of $p$-adic hypergeometric differential equations.
His principal working example was the differential equation
\bean
\label{HE_}
x(1-x) I'' +(1-2x)I'-\frac14I=0,
\eean
whose analytic at the origin solution
\bean
\label{111_}
_2F_1\Big(\frac12,\frac 12; 1; x\Big) = \frac 1\pi\,\int_1^\infty t^{-1/2}(t-1)^{-1/2}(t-x)^{-1/2} dt
=\sum_{k = 0}^\infty\binom{-1/2}{k}^2x^k
\eean
encodes periods of the Legendre family $y^2=t(t-1)(t-x)$.
Dwork used the approximations
\bea
F_s(x)=\sum_{k = 0}^{p^s-1}\binom{-1/2}{k}^2x^k \qquad\text{for}\quad s=1,2,\dots,
\eea
which are nothing but truncations of the infinite sum in \eqref{111_} 
and clearly converge to it in the disk $D_{0,1}=\{x\mid |x|_p<1\}$,
to show that the uniform limit $F_{s+1}(x)/F_s(x^p)$ as $s\to\infty$ exists in a larger domain 
$\frak D^{\text{Dw}}$ and this limit, dubbed as the ``unit root'', corresponds to a root of the
 local zeta function of the $x$-fiber in the family.
Dwork's work boosted the whole body of research in the area; we limit ourselves to 
mentioning some recent contributions on the theme \cite{AS, BV, LTYZ}.

Interestingly enough, Dwork himself indicates in \cite{Dw} that he adopts a 
similar point of view in the $p$-adic case to the one Igusa had in~\cite{Ig} 
for modulo $p$ solution of \eqref{HE_}.
Namely, the cycles of the elliptic curve $y^2=t(t-1)(t-x)$ for a given $x$ can 
be thought of as the local at $x$ analytic  solutions of the differential equation
\eqref{HE_}.  At the same time, Igusa's modulo~$p$ solution
\bea
g(x)=\sum_{k = 0}^{(p-1)/2}\binom{(p-1)/2}{k}^2x^k
\eea
of \eqref{HE_}, though indeed coinciding with Dwork's $F_1(x)$ modulo $p$,
 hints at a somewhat different way for approximating \eqref{111_} $p$-adically
  through \emph{natural} truncations of the sum.
This recipe seems to escape its own development until very recently 
Schechtman and Varchenko gave solutions to general Knizhnik--Zamolodchikov (KZ) 
equations modulo~$p$ in \cite{SV2}, recovering Igusa's polynomial as a particular case.
It has been realized in \cite{V4} that this approach goes in parallel with that of Dwork in 
\cite{Dw}; explicitly, the polynomial solutions of hypergeometric equation and of a certain
 class of KZ equations modulo $p^s$ are shown in \cite{V4} to give rise to $p$-adic 
 approximations of the corresponding unit roots.
The principal goal of this paper is to prove such expectations, at least for the cases in
which the technicality of proofs do not overshadow the beauty of outcomes.

\vsk.2>

In this paper we study certain $p^s$-approximation polynomials of hypergeometric periods.
We consider an integral of hypergeometric type like in \eqref{HE_} without specifying the cycle of integration.
For any positive integer $s$ we replace the integrand by a polynomial $\Phi_{s}(t,x)$ with integer coefficients
called the master polynomial and  define the $p^s$-approximation polynomial as the coefficient of
$t^{p^s-1}$ in the master polynomial.
This is our $p^s$-analog of the initial integral.
 In the example of \eqref{HE_} the master polynomial is
$\Phi_{s}(t,x) = t^{(p^s-1)/2}(t-1)^{(p^s-1)/2}(t-x)^{(p^s-1)/2}$ and the $p^s$-approximation polynomial
is 
\bean
\label{P_s} 
P_s(x) = (-1)^{(p^s-1)/2} \sum_{k=0}^{(p^s-1)/2}\binom{(p^s-1)/2}{k}^2 x^k.
\eean
We prove the Dwork-type congruence,
\bean
\label{Pi} 
P_{s+1}(x) P_{s-1}(x^p) \equiv P_{s}(x) P_{s}(x^p) \pmod{p^s},
\eean
in Theorem \ref{conj B}. More general $p^s$-approximation 
constructions are discussed in \cite{SV2, V4, RV1, RV2}.

\vsk.2>  
In Section \ref{sec 7} we consider the simplest example of the KZ connection. The KZ connections are objects 
of conformal field theory, representation theory, enumerative geometry, see \cite{KZ, EFK, MO}.
In our example, the KZ connection is identified with the Gauss--Manin connection of
the family
of elliptic curves $y^2=(t-z_1)(t-z_2)(t-z_3)$. We study the $p^s$-approximation polynomials to the 
elliptic period $\int \!dt/y$ and show that the $p$-adic KZ connection of our example
has an invariant line subbundle. This is a  $p$-adic feature since the corresponding complex KZ connection
has no proper invariant subbundle due to the irreducibility of its monodromy representation.

The invariant subbundles of the KZ connection over $\C$ usually are related to some additional conformal block
constructions, for example see \cite{FSV, SV2, V3}. Apparently our subbundle   is of a
different $p$-adic nature.

\vsk.2>

The results above require proving $p$-adic convergence,
which in turn rests upon establishing certain special congruences.
Considering more general hypergeometric series $F(x)=\sum_{n=0}^\infty A(n)x^n$
and its $p^s$-truncations $F_s(x)=\sum_{n=0}^{p^s-1}A(n)x^n$,
Dwork showed that
\bean
\label{tI}
F_{s+1}(x) F_{s-1}(x^p) \equiv F_{s}(z) F_{s}(x^p) \pmod{p^s}
\quad\text{for}\; s=1,2,\dots,
\eean
in \cite[Theorem~2]{Dw}; this allowed him to conclude 
 the existence of the $p$-adic limit
 \linebreak
  $F_{s+1}(x)/F_s(x^p)$ as $s\to\infty$ in \cite[Theorem~3]{Dw}.
As an auxiliary component of Dwork's argument the other set of congruences, 
a l\`a Lucas, was used for the sequence of coefficients $A(n)$:
\bean
\label{tII}
\\[-5mm] \notag
\frac{A(n+mp^s)}{A([n/p]+mp^{s-1})}\equiv\frac{A(n)}{A([n/p])}\pmod{p^s}
\quad\text{for}\; m,n\in\Z_{\ge0} \;\text{and}\; s=1,2,\dots
\eean
(see Corollary~1(ii) on p.~36 in~\cite{Dw}).
These two different-looking families of congruences \eqref{tI} and \eqref{tII}
 are both known as Dwork congruences, and to distinguish between the two 
 we dub them type~I and type~II, respectively.
Our Igusa-inspired $p^s$-aproximations of solutions of hypergeometric equation, like \eqref{P_s} above, 
are of dual nature. Although congruences \eqref{Pi} look like congruences \eqref{tI} of type I
we may view the sequence $(P_s(x))_{s\geq 1}$ as a subsequence of a suitable
polynomial sequence $A(n;x)$ depending
 on the extra parameter $x$ and satisfying
\bean
\label{tII-x}
\\\notag
\frac{A(n+mp^s;x)}{A([n/p]+mp^{s-1};x^p)}\equiv\frac{A(n;x)}{A([n/p];x^p)}\pmod{p^s}
\qquad\text{for}\quad m,n\in\Z_{\ge0} \quad\text{and}\quad s=1,2,\dots\,.
\eean
Then the restriction of these type~II Dwork congruences to the 
subsequence $(P_s(x))_{s\geq 1}$ reads as
  type~I congruences for the $p^s$-aproximation polynomials 
in parameter $x$.
(We make this explicit in the remark after Theorem \ref{thm C_k}.)
To summarize, our principal tool for establishing the existence of $p$-adic 
convergence are Dwork-type congruences \eqref{tII-x}, of which
 required type~I congruences are particular instances.
General theorems towards Dwork's congruences of type~II
were given by Mellit in~\cite{Me} and, independently, by Samol and van Straten in~\cite{SvS}.
As these results are insufficiently general and only for \eqref{tII} rather than \eqref{tII-x},
we extend them further using the method from Mellit's unpublished preprint \cite{Me}
(see also \cite{MV} for another application).
Our result based on Mellit's elegant approach is displayed in Section~\ref{sec 2}, 
and its power is illustrated by the congruences \eqref{Pi} from Theorem \ref{conj B}
and by several other quite different applications in later sections.

In Section~\ref{sec 8} we conjecture some  stronger congruences for the polynomials
$P_s(x)$.

\subsection*{Acknowledgements}
The authors thank Frits Beukers, Andrew Granville, Anton Mellit, Rich\'ard Rim\'anyi, Steven Sperber, and Masha Vlasenko 
 for useful discussions.

\section{On ghosts}
\label{sec 2}

In this paper $p$ is an odd prime.

\subsection{Mellit's theorem}
 Let $\La(t)$ be a Laurent polynomial in variables $t=(t_1,\dots,t_r)$
with coefficients in $\Z_p$ and constant term $\on{CT}_t(\La)$. 
Assume that the Newton polytope  of $\La(t)$ contains only one interior point $\{0\}$.

\vsk.2>
For a tuple   $a=(a_0,a_1,\dots,a_{l-1})$, denote by $l(a)=l$ its length.
For two tuples $a$ and $b$, the concatenation product $a*b$ is the tuple
 of length $l(a)+l(b)$ obtained by gluing $a$ and $b$ together.
For  $a=(a_0$, \dots, $a_{l-1})$ of length $l$, denote by 
$a'=(a_1,\dots,a_{l-1})$ the ``derivative'' tuple of length $l-1$.
If  $a$ is a tuple of numbers, denote  $|a|=\sum_{i=0}^{l-1}a_i$.

\vsk.2>
For a tuple  $m=(m_0,\dots,m_l)$ of integers from $\{1,\dots,p-1\}$
denote by $\on{CT}_t(\La^m)$ the constant term of the Laurent polynomial
$\La(t)^{m_0+m_1p+m_2p^2+\dots+m_{l-1}p^{l-1}}$.

\begin{thm} [{Anton Mellit, 2009, \cite{Me}, unpublished}]
\label{thm Mellit}
Let $a,b,c$ be tuples  of integers from $\{1,\dots,p-1\}$, where $b,c,a'$ can be empty, that is, of length 0.
Then
\bean
\label{Me}
\on{CT}_t(\La^{a*b}) \on{CT}_t(\La^{a'*c}) \equiv \on{CT}_t(\La^{a'*b}) 
\on{CT}_t(\La^{a*c}) \pmod{p^{l(a)}}.
\eean

\end{thm}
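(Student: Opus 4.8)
The plan is to follow Mellit's method: introduce the ring $R=\Z_p[[t,t^{-1}]]$-style formal neighborhood of the constant term functional, and encode the quantities $\on{CT}_t(\La^m)$ via a suitable operator acting on Laurent polynomials. Concretely, let $U$ be the $\Z_p$-linear operator on Laurent polynomials in $t=(t_1,\dots,t_r)$ that sends $t^\alpha$ to $t^{\alpha/p}$ if $p\mid\alpha$ (componentwise) and to $0$ otherwise; this is the standard ``Cartier-type'' or ``$U_p$'' operator, and the key observation is that $\on{CT}_t(f)$ equals the coefficient extraction that can be iterated through $U$. For a tuple $m=(m_0,\dots,m_{l-1})$ one writes $\La^{m_0+m_1p+\cdots+m_{l-1}p^{l-1}}$ and peels off digits from the bottom: multiplying by $\La^{m_0}$, applying $U$, multiplying by $\La^{m_1}$, applying $U$, and so on, so that
\bean
\label{plan-U}
\on{CT}_t(\La^m)=\on{CT}_t\bigl(\La^{m_{l-1}}U(\La^{m_{l-2}}U(\cdots U(\La^{m_0})\cdots))\bigr).
\eean
The hypothesis that the Newton polytope of $\La$ has $\{0\}$ as its only interior point is exactly what guarantees that these intermediate objects live in a fixed finitely generated $\Z_p$-lattice (the monomials appearing stay in a bounded polytope), so that reduction modulo $p^s$ is well-behaved and the operator $U$ preserves the relevant lattice.

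Next I would set up the induction on $l(a)$. Write $a=(a_0)*a'$ so that $a_0$ is the bottom digit common to $\La^{a*b}$ and $\La^{a*c}$, and $a'$ is the common ``upper part''. Define, for any tuple $d$, the Laurent polynomial $\La^{a_0}U^{-1}$-free object $v_d:=\La^{d_{l-1}}U(\cdots U(\La^{d_0})\cdots)$ built from the digits of $d$, so that $\on{CT}_t(\La^d)=\on{CT}_t(v_d)$. The four constant terms in \eqref{Me} are then $\on{CT}_t$ of polynomials built by first forming $w_b:=U(\La^{b_{l(b)-1}}U(\cdots U(\La^{b_0})\cdots))$ and $w_c$ similarly for $c$ (note $b,c$ sit \emph{below} $a$ in the digit expansion, since in $a*b$ the tuple $a$ provides the high digits — I would be careful here about the digit ordering dictated by the $*$ convention in the statement, adjusting \eqref{plan-U} accordingly), then applying the common string of operations coming from $a$, respectively from $a'$. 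The heart of the matter is a ``one-step'' lemma: if $x\equiv y\pmod{p^k}$ in the relevant lattice, then $U(\La^{a_0}x)\equiv U(\La^{a_0}y)\pmod{p^k}$, which is immediate since $U$ and multiplication by $\La^{a_0}\in\Z_p[t,t^{-1}]$ are $\Z_p$-linear and lattice-preserving; this lets one propagate a mod-$p^k$ congruence up through the digits of $a'$.

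The crucial base-level input — and the step I expect to be the main obstacle — is the congruence that starts the induction, namely the $l(a)=1$ case, which is a Lucas-type statement: $U(\La^{a_0}x)\cdot(\text{something})\equiv(\text{something})\pmod p$ capturing that ``dividing the exponent by $p$'' commutes with raising $\La$ to a power up to the $p$-power Frobenius, i.e. $\La(t)^{a_0+pn}\equiv \La(t^p)^{\,?}\La(t)^{a_0}\cdot(\cdots)$ type relations modulo $p$, together with the fact that $U(\La(t^p)g(t))=\La(t)U(g(t))$. Combining these, one shows the basic mod-$p$ ``determinant'' identity
\bean
\label{plan-base}
\on{CT}_t(\La^{(a_0)*b})\,\on{CT}_t(\La^{c})
\equiv \on{CT}_t(\La^{b})\,\on{CT}_t(\La^{(a_0)*c}) \pmod p,
\eean
which is the $l(a)=1$ instance of \eqref{Me}. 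For the inductive step, write $a=(a_0,\dots,a_{l-1})$, peel off the \emph{top} digit $a_{l-1}$, apply the inductive hypothesis to the length-$(l-1)$ tuple $(a_0,\dots,a_{l-2})$ to get a congruence modulo $p^{l-1}$ between the two ``inner'' constant-term-like lattice elements, and then use the one-step lemma plus the base identity \eqref{plan-base} to upgrade: the product structure of \eqref{Me} is designed precisely so that, writing each side as (inner object from $a'$-part) times (correction from the top digit), the difference of the two sides factors as $p^{l-1}\times(\text{mod-}p\text{ quantity})$, giving modulo $p^l$. Making this bookkeeping precise — tracking exactly which lattice the intermediate objects lie in, and verifying that the ``telescoping'' of the two products indeed produces the extra factor of $p$ beyond the inductive $p^{l-1}$ — is the technical core; everything else is linear algebra over $\Z_p$ and the elementary fact that $\binom{a_0+pn}{pk+j}\equiv\binom{a_0}{j}\binom{n}{k}\pmod p$ for $0\le j\le a_0\le p-1$ underlying \eqref{plan-base}.
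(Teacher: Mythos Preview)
Your plan has a genuine gap at its foundation: the displayed identity
\[
\on{CT}_t(\La^m)=\on{CT}_t\bigl(\La^{m_{l-1}}U(\La^{m_{l-2}}U(\cdots U(\La^{m_0})\cdots))\bigr)
\]
is false. It would hold for the constant term of $\La(t)^{m_0}\La(t^p)^{m_1}\cdots\La(t^{p^{l-1}})^{m_{l-1}}$, since then $U$ genuinely peels off one Frobenius twist at a time via $U(g(t^p)h(t))=g(t)\,U(h(t))$. But the object in Mellit's theorem is $\La(t)^{m_0+m_1p+\cdots+m_{l-1}p^{l-1}}$, and $\La(t)^p\ne\La(t^p)$; one only has $\La(t)^p=\La(t^p)+pG(t)$ for some Laurent polynomial $G$. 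Your formula therefore holds modulo $p$ (this is Lucas), but the discrepancies accumulate and are precisely of the size you are trying to control. Your ``one-step lemma'', being pure $\Z_p$-linearity, only propagates a congruence at a fixed power of $p$; it never manufactures the extra factor of $p$ required in the inductive step, and your sketch does not say where that factor would come from. (You also have the digit ordering reversed: in $a*b$ the tuple $a$ supplies the \emph{low} digits and $b$ the high ones, as $a'$ drops the bottom digit $a_0$.)

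What the paper actually does, following Mellit, is to confront these error terms directly via \emph{ghosts}: write $\La^{p^m}=\sum_{k=0}^m R_k(t^{p^{m-k}})$ with $R_k\equiv 0\pmod{p^k}$ (so $R_0=\La$ and $R_m(t)=\La(t)^{p^m}-\La(t^p)^{p^{m-1}}$), and expand $\La^{m_0+m_1p+\cdots+m_{l-1}p^{l-1}}$ as a sum of ghost products indexed by tuples $m=(m_0,\dots,m_{l-1})$ with $0\le m_i\le i$. The Newton-polytope hypothesis forces the constant term of each such product to factor along the unique decomposition of $m$ into \emph{indecomposable} pieces, and a short combinatorial lemma shows that an indecomposable piece of length $k$ already carries a factor $p^{k-1}$. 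Expanding both sides of the congruence, one discards all terms divisible by $p^{l(a)}$ and exhibits an explicit bijection between the surviving terms on the two sides by matching prefixes of their factorizations. No induction on $l(a)$ is used. If you want to rescue the $U$-operator picture you will have to track the ghost corrections through every application of $U$, at which point you are essentially reproducing the paper's argument in different notation.
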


We modify the statement and three-page Mellit's proof of Theorem \ref{thm Mellit}
and prove Theorem \ref{thm cg} below.

\subsection{Convex polytopes}
\label{sec A1}

Given a positive integer $r$ we consider convex polytopes, which are convex hulls of finite subsets of $\Z^r\subset \R^r$.

\begin{defn}
\label{defn}

A tuple $(N_0, N_1,\dots,N_{l-1})$ of convex polytopes is called $\on{admissible}$ if for any
$0\leq i\leq j \leq l-1$ we have
\bea
\big(N_i + pN_{i+1}+ \dots + p^{j-i}N_j\big)\cap p^{j-i+1}\Z^r = \{0\}.
\eea

\end{defn}

\subsection{Definition of ghosts}

Let $\La(t,z)$ be a Laurent polynomial in some variables 
$t=(t_1,\dots,t_r)$, $z=(z_1,\dots,z_{r'})$
with coefficients in $\Z_p$.  We 
define the ghost terms $R_m(\La)$, $m\geq 0,$
 as the unique sequence of Laurent polynomials  in $t,z$ satisfying the following two properties:
\begin{enumerate}[(i)]

\item \label{ghost-i}
For each $m$ we have
\bea
\La(t,z)^{p^m} = R_{0}(\La)(t^{p^{m}}, z^{p^{m}})
+R_{1}(\La)(t^{p^{m-1}}, z^{p^{m-1}})+\dots + R_{m}(\La)(t, z).
\eea

\item \label{ghost-ii}
For each $m$ the coefficients of $R_m(\La)(t,z)$ are divisible by $p^m$ in $\Z_p$.

\end{enumerate}

\vsk.2>
Properties \eqref{ghost-i}--\eqref{ghost-ii} recursively determine the polynomials $R_{m}(\La)(t, z)$.
Namely,
\bean
\label{R_m}
R_m(\La)(t,z) = \La(t,z)^{p^m} - \La(t^p,z^p)^{p^{m-1}}\,, \quad R_0(\La)(t,z) = \La(t,z). 
\eean

\vsk.2>
Let $F(t,z)$ be a Laurent polynomial in $t,z$. Let $N(F)$ be the Newton polytope of $F(t,z)$ with respect to 
the $t$ {\it variables only}.  Clearly, we have 
\bean
\label{subset}
N(R_m(\La))\subset p^mN(\La).
\eean

\subsection{Composed ghosts}

Let $\la = (\La_0(t,x), \dots , \La_{l-1}(t,z))$ be a tuple of Laurent polynomials with coefficients in $\Z_p$. 
We decompose the product 
\bea
\tilde \la(t,z) := \La_0(t,x) (\La_1(t,z))^p\cdots (\La_{l-1}(t,z))^{p^{l-1}}
\eea
into the sum of ghost terms of $\La_0, \dots, \La_{l-1}$. As the result we obtain that
$\tilde \la$ is the sum of the products
\bea
R_{m,\la}(t,z) 
&:=&
R_{m_0}(\La_0)(t,z)\cdot
R_{m_1}(\La_1)(t^{p^{1-m_1}},z^{p^{1-m_1}})
\cdot
R_{m_2}(\La_2)(t^{p^{2-m_2}},z^{p^{2-m_2}})
\cdots
\\
&&
\cdots R_{m_{l-1}}(\La_{l-1})(t^{p^{l-1-m_{l-1}}},z^{p^{l-1-m_{l-1}}}),
\eea
where $m=(m_0,\dots,m_{l-1})$ runs over the set of all $l$-tuples of integers satisfying $0\leq m_i \leq i$.
Clearly, we have
\bea
R_{m,\la}(t,z) \equiv 0 \pmod{p^{|m|}}
\eea
and
\bea
N(R_{m,\la}(t,z)) \subset N(\La_0(t,z))
+  pN(\La_1(t,z)) + \dots + p^{l-1} N(\La_{l-1}(t,z)).
\eea

\subsection{Indecomposable tuples}

Denote by $S_k$ the set of all $k$-tuples $m=(m_0,\dots,m_{k-1})$ of integers
such that $0\leq m_i\leq i$.
Put $S=\bigcup_{k=1}^\infty S_k$. A tuple $m\in S$ is called 
{\it indecomposable} if it cannot be presented as $m'*m''$ for
$m',m''\in S$. Denote by $S_k^{\on{ind}}$ the set of all indecomposable $k$-tuples and put 
$S^{\on{ind}} = \bigcup_{k=1}^\infty S_k^{\on{ind}}$.

\begin{lem}
\label{lem ind}
If $m\in S_k^{\on{ind}}$, then $|m|\geq k-1$.

\end{lem}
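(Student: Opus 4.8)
The plan is to first translate indecomposability into an explicit combinatorial condition on the entries of $m$, then recast that condition as a statement about covering $\{1,\dots,k-1\}$ by certain intervals, and finally conclude by a counting argument.

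First I would analyze what it means to split $m=(m_0,\dots,m_{k-1})\in S_k$ as $m=m'*m''$ with $m'=(m_0,\dots,m_{j-1})$ and $m''=(m_j,\dots,m_{k-1})$ for some $1\le j\le k-1$. The point is that $m'$ \emph{automatically} lies in $S_j$: the constraint $0\le m_i\le i$ defining membership in $S_j$ is simply inherited from $m\in S_k$. Hence $m$ is decomposable if and only if, for some $1\le j\le k-1$, the suffix $m''$, re-indexed to start at position $0$ (so its $i$-th entry is $m_{j+i}$), lies in $S_{k-j}$; that is, $m$ is decomposable iff there is $j\in\{1,\dots,k-1\}$ with $m_{j+i}\le i$ for all $0\le i\le k-1-j$. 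Negating, $m$ is indecomposable iff for every $j\in\{1,\dots,k-1\}$ there exists $i\in\{0,\dots,k-1-j\}$ with $m_{j+i}\ge i+1$.

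Next I would reformulate this. Substituting $\ell=j+i$, as $i$ ranges over $\{0,\dots,k-1-j\}$ the index $\ell$ ranges over $\{j,\dots,k-1\}$, and $m_{j+i}\ge i+1$ becomes $m_\ell\ge\ell-j+1$, i.e.\ $j\ge\ell-m_\ell+1$. So, defining for each $\ell\in\{1,\dots,k-1\}$ the set $I_\ell:=\{\ell-m_\ell+1,\ell-m_\ell+2,\dots,\ell\}$, indecomposability of $m$ says exactly that every $j\in\{1,\dots,k-1\}$ lies in $I_\ell$ for some $\ell$, i.e.\ $\{1,\dots,k-1\}\subseteq\bigcup_{\ell=1}^{k-1}I_\ell$. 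Since $0\le m_\ell\le\ell\le k-1$, each $I_\ell$ is a set of exactly $m_\ell$ consecutive integers contained in $\{1,\dots,k-1\}$ (empty when $m_\ell=0$). The counting argument then finishes it:
\[
k-1=|\{1,\dots,k-1\}|\le\sum_{\ell=1}^{k-1}|I_\ell|=\sum_{\ell=1}^{k-1}m_\ell=|m|,
\]
the last equality using $m_0=0$.

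The only real content is the first step: noticing that the prefix of any splitting is automatically admissible, so that decomposability is controlled purely by the suffix inequalities $m_{j+i}\le i$. Once that is in hand, the interval-covering reformulation and the final counting are routine, so I do not anticipate a serious obstacle beyond getting the bookkeeping of indices right.
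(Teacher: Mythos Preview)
Your proof is correct and is essentially the same argument as the paper's: both translate indecomposability into the statement that each $j\in\{1,\dots,k-1\}$ lies in one of the intervals $\{\ell-m_\ell+1,\dots,\ell\}$ (the paper writes this as ``for each $i$ there is $j\ge i$ with $j\ge i>j-m_j$''), and then count. You simply spell out the reduction to suffix inequalities and the interval-covering reformulation more explicitly than the paper does.
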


\begin{proof}
If $m$ is indecomposable, then for each $i\in\{1,\dots,k-1\}$ there exists
 $j\geq i$ such that $m_j>j-i$, i.e. $j\geq i>j-m_j$. The number 
 of such $i$ for a given $j$ is $m_j$. The total number of $i$ is $k-1$, therefore the sum of $m_j$ is at least $k-1$.
\end{proof}

\begin{lem}
\label{lem un dec}
For each $m\in S$, there exist unique indecomposable $m^1,\dots, m^r$ such that
$m=m^1*\dots *m^r$.
\end{lem}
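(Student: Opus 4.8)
The plan is to prove both existence and uniqueness of the factorization $m = m^1 * \dots * m^r$ into indecomposable tuples, proceeding by induction on the length $k$ of $m$.

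For \emph{existence}, I would argue by strong induction on $k = l(m)$. If $m \in S_k$ is itself indecomposable we are done with $r = 1$. Otherwise, by definition there is a decomposition $m = m' * m''$ with $m', m'' \in S$; note that both $m'$ and $m''$ are then genuine members of $S$ (not just abstract tuples), because the defining inequalities $0 \le m_i \le i$ for the initial segment $m'$ are inherited directly, and for the tail $m''$ one checks that if $m'' = (m_{k'}, \dots, m_{k-1})$ then the required inequalities $m''_j = m_{k'+j} \le k'+j$ hold a fortiori since $m_{k'+j} \le k'+j$ would need $m''_j \le j$ — here I would need to verify carefully that being a \emph{suffix} that starts at position $k'$ still lands in $S$, i.e. that the relevant condition is $0 \le m''_j \le j$ and this follows. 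Once $m'$ and $m''$ both lie in $S$ with strictly smaller lengths, the inductive hypothesis factors each into indecomposables, and concatenating the two lists gives the desired factorization of $m$.

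For \emph{uniqueness}, the key observation is that the leftmost factor is canonically determined: among all proper decompositions $m = m' * m''$ with $m', m'' \in S$, there is a shortest possible first block $m'$, and I claim this shortest first block is exactly $m^1$ and is indecomposable. Indeed, if it could be further split as $m^1 = u * v$ with $u, v \in S$, then $m = u * (v * m'')$ would be a decomposition with a strictly shorter first block, a contradiction; so the minimal first block is automatically indecomposable. Conversely any indecomposable prefix of $m$ that arises in \emph{some} factorization must be this minimal one: if $m = n^1 * (\text{rest})$ with $n^1$ indecomposable and $l(n^1) > l(m^1)$, then since $m^1$ is a prefix of $m$ and $n^1$ is a prefix of $m$, one of $m^1, n^1$ is a prefix of the other; as $l(m^1) \le l(n^1)$, $m^1$ is a prefix of $n^1$, so $n^1 = m^1 * w$ for some tuple $w$, and one checks $w \in S$ — contradicting indecomposability of $n^1$ unless $w$ is empty, i.e. $n^1 = m^1$. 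Thus $m^1$ is forced, and stripping it off, the remainder $m^2 * \dots * m^r$ is a factorization of the shorter tuple $(m^1)^{\text{rest}}$, so uniqueness follows by induction on $k$.

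The main obstacle I anticipate is the bookkeeping around which suffixes of a tuple in $S$ again lie in $S$: the condition $0 \le m_i \le i$ is position-dependent, so when one cuts $m = m' * m''$ the tail $m''$ has its indices re-based to start at $0$, and one must confirm this re-based tail still satisfies the inequalities — which it does precisely because $m''_j$ (the entry at re-based position $j$) equals $m_{k'+j} \le k'+j$, and since $k' \ge 1$ whenever $m''$ is a proper suffix... wait, actually one needs $m''_j \le j$, and $m_{k'+j} \le k'+j$ does \emph{not} immediately give $m_{k'+j} \le j$. So the real content is: the definition of $S$ must be such that proper decompositions $m' * m''$ land in $S \times S$ — this is presumably guaranteed by how ``$m$ can be presented as $m' * m''$ for $m', m'' \in S$'' is stated, meaning only those cuts are allowed where the tail genuinely re-bases into $S$. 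I would therefore be careful to use ``decomposable'' strictly as: there exist $m', m'' \in S$ (in the re-based sense) with $m = m' * m''$, and indecomposability is the negation; with that convention the prefix/suffix manipulations above are consistent and the induction closes cleanly.
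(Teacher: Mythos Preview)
Your argument is correct, and the overall strategy (induction on length, compare two factorizations by matching an extremal factor) is the same as the paper's. The one genuine difference is that you peel from the \emph{left} while the paper peels from the \emph{right}: the paper compares the last factors $m^r$ and $n^s$, assumes $l(n^s)\ge l(m^r)$, and observes that if the inequality is strict then $n^s = w * m^r$ with $w$ a \emph{prefix} of $n^s$, hence automatically in $S$ (since the condition $0\le m_i\le i$ is inherited by initial segments). This immediately contradicts indecomposability of $n^s$, so $n^s=m^r$ and one strips them and inducts.

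Your left-to-right version runs into exactly the bookkeeping issue you flagged: when you write $n^1 = m^1 * w$, the piece $w$ is a \emph{suffix} of $n^1$, and suffixes of $S$-tuples need not lie in $S$. The fix, which you gesture at but do not write out, is that $w$ is simultaneously a \emph{prefix} of the tail $m^2*\cdots*m^r$ (or, in your formulation, a prefix of the tail in the decomposition $m = m^1 * (\text{tail})$ with $\text{tail}\in S$), and that tail is in $S$, so $w\in S$ after all. Once that is said your argument closes. The paper's right-to-left choice is thus slightly slicker: it sidesteps the suffix issue entirely because the complementary piece is always a prefix. Your existence argument, by the way, needs none of the suffix discussion---decomposability by definition hands you $m',m''\in S$, so the worry in your first paragraph is unnecessary there.
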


\begin{proof} The proof is by induction on $l(m)$. If $l(m)=1$, then $m=(m_0)=(0)$ and $m$ is indecomposable.
Let us prove the induction step. Let 
\bea
m=m^1*\dots *m^r = n^1*\dots *n^s
\eea
be two decompositions into indecomposable factors. We may assume that $l(n^s)\geq l(m^r)$.
If $l(n^s) = l(m^r)$, then  $n^s = m^r$. In this case we can conclude that
$m^1*\dots *m^{r-1} = n^1*\dots *n^{s-1}$, and the statement follows from the induction hypothesis.
If $l(n^s)> l(m^r)$, then the sequence $n^s$ contains the sequence $m^r$ as its last
$l(m^r)$-part. This contradicts to the indecomposability of $n^s$. The lemma is proved.
\end{proof}

\subsection{Polynomials $I_\la$}

For an $l$-tuple $\la=(\La_0(t,z),\La_1(t,z),\dots,\La_{l-1}(t,z))$ of Laurent polynomials  with coefficients
in $\Z_p$ define
\bea
I_\la(t,z) = \sum_{m\in S_{l}^{\on{ind}}} R_{m,\la}(t,z).
\eea
We have
\bean
\label{equiv I}
I_\la(x,z) \equiv 0 \qquad \pmod{p^{l-1}}
\eean
by Lemma \ref{lem ind} and
\bea
N(I_{\la}(t,z)) \subset N(\La_0(t,z))
+  pN(\La_1(t,z)) + \dots + p^{l-1} N(\La_{l-1}(t,z))\,.
\eea

\begin{lem}
\label{lem decomp}
We have
\bean
\label{decomp}
&&
\\
&&
\notag
\tilde \la(t,z) =\sum_{\la=\la^1*\dots *\la^s}
I_{\la^1}(t,z)
I_{\la^2}(t^{p^{l(\la^1)}},z^{p^{l(\la^1)}})
\cdots
I_{\la^s}(t^{p^{l(\la^1)+\dots +l(\la^{s-1})}},z^{p^{l(\la^1)+\dots+l(\la^{s-1})}}),
\eean
where the sum is over the set of all possible decompositions of the tuple $\la$ into a product of tuples.
\end{lem}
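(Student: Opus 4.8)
The plan is to expand both sides into composed ghost terms $R_{m,\la}$ and then match them up using the unique factorization of Lemma~\ref{lem un dec}. Recall that, by the very construction of composed ghosts (apply property~(i) of the definition of ghosts to each factor $\La_i(t,z)^{p^i}$ of $\tilde\la$ and multiply the resulting sums out), one has
\[
\tilde\la(t,z)=\sum_{m\in S_l}R_{m,\la}(t,z),
\]
the sum being over all $l$-tuples $m=(m_0,\dots,m_{l-1})$ with $0\le m_i\le i$. So it will suffice to show that the right-hand side of \eqref{decomp}, once each $I_{\la^j}$ is written out via $I_{\la^j}=\sum_{m^j\in S_{l(\la^j)}^{\on{ind}}}R_{m^j,\la^j}$, collapses back to this same sum.

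The key technical point is the behaviour of composed ghosts under concatenation. I would first prove that if $m=a*b$ with $l(a)=k$, so $a\in S_k$ and $b\in S_{l-k}$, and $\la=\la^a*\la^b$ is the corresponding splitting with $\la^a=(\La_0,\dots,\La_{k-1})$, $\la^b=(\La_k,\dots,\La_{l-1})$, then
\[
R_{m,\la}(t,z)=R_{a,\la^a}(t,z)\cdot R_{b,\la^b}\!\big(t^{p^k},z^{p^k}\big).
\]
This is a direct comparison of the three products defining $R_{m,\la}$, $R_{a,\la^a}$, $R_{b,\la^b}$: the first $k$ factors of $R_{m,\la}$ are literally the factors of $R_{a,\la^a}$, while the last $l-k$ factors, namely $R_{m_{k+j}}(\La_{k+j})\big(t^{p^{(k+j)-m_{k+j}}},z^{p^{(k+j)-m_{k+j}}}\big)$ for $0\le j\le l-k-1$, become, after relabelling $\La_{k+j}$ as the $j$-th entry $\mu_j$ of $\la^b$, exactly the factors $R_{b_j}(\mu_j)\big((t^{p^k})^{p^{\,j-b_j}},(z^{p^k})^{p^{\,j-b_j}}\big)$ of $R_{b,\la^b}$ evaluated at $(t^{p^k},z^{p^k})$. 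Iterating this identity along a factorization $m=m^1*\dots*m^s$ with $l(m^j)=k_j$ and $\la=\la^1*\dots*\la^s$ correspondingly yields
\[
R_{m,\la}(t,z)=R_{m^1,\la^1}(t,z)\,R_{m^2,\la^2}\!\big(t^{p^{k_1}},z^{p^{k_1}}\big)\cdots R_{m^s,\la^s}\!\big(t^{p^{k_1+\dots+k_{s-1}}},z^{p^{k_1+\dots+k_{s-1}}}\big).
\]

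Finally, I would put the pieces together. Expanding the right-hand side of \eqref{decomp} turns it into a sum over data consisting of a decomposition $\la=\la^1*\dots*\la^s$ (equivalently, a composition $l=k_1+\dots+k_s$ with $k_j\ge1$) together with a choice of indecomposable tuple $m^j\in S_{k_j}^{\on{ind}}$ for each $j$; by the previous display, the corresponding summand is exactly $R_{m,\la}(t,z)$ with $m=m^1*\dots*m^s\in S_l$. By Lemma~\ref{lem un dec}, the assignment $(m^1,\dots,m^s)\mapsto m^1*\dots*m^s$ is a bijection from the set of all such tuples of indecomposables onto $S_l$, so the regrouped sum is precisely $\sum_{m\in S_l}R_{m,\la}(t,z)=\tilde\la(t,z)$, which is \eqref{decomp}. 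The only thing that needs genuine care is the index bookkeeping in the concatenation identity — checking that the global substitution $t\mapsto t^{p^k}$, $z\mapsto z^{p^k}$ matches the exponent shifts $p^{\,i-m_i}$ built into the definition of $R_{m,\la}$; everything else is a formal rearrangement of finitely many terms.
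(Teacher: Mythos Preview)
Your proof is correct and follows essentially the same route as the paper: both start from $\tilde\la=\sum_{m\in S_l}R_{m,\la}$, establish the factorization identity $R_{m,\la}(t,z)=\prod_j R_{m^j,\la^j}(t^{p^{k_1+\dots+k_{j-1}}},z^{p^{k_1+\dots+k_{j-1}}})$ for $m=m^1*\dots*m^s$, and invoke Lemma~\ref{lem un dec} to match the two sides term by term. You supply a bit more detail on the concatenation identity than the paper (which just states the iterated version \eqref{ram} as self-evident), but the argument is the same.
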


\begin{proof} We have
\bea
\tilde \la(t,z) = \sum_{m\in S_{l}} R_{m,\la}(t,z).
\eea
For any $m\in S_{l}$, let $m=m^1*\dots *m^s$ be its unique idecomposition into
indecomposable factors. Let $\la=\la^1*\dots*\la^s$ be the corresponding factorization of
the sequence $\la$.  Then 
\bean
\label{ram}
{}
\\
\notag
R_{m,\la}(t,z) = R_{m^1,\la^1}(t,z) R_{m^2,\la^2}(t^{p^{l(\la^1)}},z^{p^{l(\la^1)}})
\cdots
R_{m^s,\la^s}(t^{p^{l(\la^1)+\dots+l(\la^{s-1})}},z^{p^{l(\la^1)+\dots+l(\la^{s-1})}}).
\eean
This product contributes to the the expansion of the product
\bean
\label{II}
I_{\la^1}(t,z)
I_{\la^2}(t^{p^{l(\la^1)}},z^{p^{l(\la^1)}})
\cdots
I_{\la^s}(t^{p^{l(\la^1)+\dots +l(\la^{s-1})}},z^{p^{l(\la^1)+\dots+l(\la^{s-1})}})
\eean
into the sum, and conversely each summand in the expansion of \eqref{II}
comes from \eqref{ram} for a unique indecomposable factorization $m=m^1*\dots*m^s$.
\end{proof}

\subsection{Admissible tuples of Laurent polynomials}

\begin{defn}
\label{defn2}
A tuple $\la=(\La_0(t,z),\La_1(t,z),\dots,\La_{l-1}(t,z))$ of Laurent polynomials 
is called $\on{admissible}$ if the tuple 
$(N(\La_0(t,z)), N(\La_1(t,z)),\dots,N(\La_{l-1}(t,z)))$ of its Newton polytopes with respect to variables $t$
is admissible.

\end{defn}

Denote by $\on{CT}_t(\La)(z)$ the constant term of the Laurent polynomial $\La(t,z)$ with respect to
the variables $t$.
The constant term $\on{CT}_t(\La)(z)$ is a Laurent polynomial in $z$.

\begin{lem}
\label{lem 1pt}

Let $\la=(\La_0(t,z),\La_1(t,z),\dots,\La_{l-1}(t,z))$  be an admissible tuple of Laurent polynomials
with coefficients in $\Z_p$ and $\la=\la^1*\dots *\la^s$. Then
\bean
\label{CII}
&&
\on{CT}_t\!\bigg(\prod_{i=1}^s
I_{\la^i}\big(t^{p^{l(\la^1)+\dots +l(\la^{i-1})}},z^{p^{l(\la^1)+\dots+l(\la^{i-1})}}\big)\bigg)\!(z)
=
\\
\notag
&&
\phantom{aaaaaaaaaaaaaaaa}
=\,\prod_{i=1}^s
\on{CT}_t \big(I_{\la^i}(t,z)\big)(z^{p^{l(\la^1)+\dots+l(\la^{i-1})}}).
\eean

\end{lem}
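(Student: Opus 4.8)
The plan is to reduce the multi-factor claim to the single-factor statement that constant-term extraction commutes with the Frobenius-type substitution $t\mapsto t^{p^k}$, and then to control the interaction of the several factors via the admissibility hypothesis. First I would observe that each $I_{\la^i}(t,z)$ is, by construction, a $\Z_p$-linear combination of the composed ghosts $R_{m,\la^i}$ with $m\in S_{l(\la^i)}^{\on{ind}}$, and hence its Newton polytope in the $t$-variables is contained in $N(\La_0^i)+pN(\La_1^i)+\dots+p^{l(\la^i)-1}N(\La^i_{l(\la^i)-1})$, where $\La^i_j$ are the entries of $\la^i$. Writing $e_i:=l(\la^1)+\dots+l(\la^{i-1})$, the $i$th factor in the product on the left of \eqref{CII} has $t$-Newton polytope inside $p^{e_i}\big(N(\La^i_0)+pN(\La^i_1)+\dots\big)$, so the product on the left has $t$-support inside $N(\La_0)+pN(\La_1)+\dots+p^{l-1}N(\La_{l-1})$, which is exactly the admissible sum $N_0+pN_1+\dots+p^{l-1}N_l$ with $l=l(\la)$.

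Next I would extract the constant term termwise. The constant term of a product of Laurent polynomials in $t$ is the sum, over all ways of choosing one monomial from each factor whose product is $t^0$, of the product of the corresponding coefficients. So the left side of \eqref{CII} equals the sum over tuples of exponent vectors $\al^i\in\Z^r$, with $\al^i$ in the $t$-Newton polytope of $I_{\la^i}$ (i.e.\ $\al^i\in N(\La^i_0)+pN(\La^i_1)+\dots$), subject to $\sum_i p^{e_i}\al^i=0$, of the product of the matching coefficients of the $I_{\la^i}$. The key point now is that admissibility forces each $\al^i=0$ individually: if not, let $i$ be the \emph{largest} index with $\al^i\neq 0$; then $p^{e_i}\al^i=-\sum_{j<i}p^{e_j}\al^j$, and dividing by $p^{e_i}$ we get $\al^i=-\sum_{j<i}p^{e_j-e_i}\al^j$, where each $e_j-e_i<0$; rearranging, $\al^i+p\,(\text{stuff})=0$ with the ``stuff'' lying in the appropriate lower-index polytope sum, which contradicts the admissibility condition $\big(N_{e_i}+pN_{e_i+1}+\dots\big)\cap p\,\Z^r=\{0\}$ applied to the block of polytopes whose Newton data constrain $\al^i$ (more precisely, one peels off the factors one block at a time, using the definition of admissibility for the nested sums $N_a+pN_{a+1}+\dots+p^{b-a}N_b$). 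Hence only the all-zero tuple of exponent vectors contributes, and the left side collapses to the product of the coefficients of $t^0$ in each $I_{\la^i}$, evaluated with $z$ replaced by $z^{p^{e_i}}$ — because the substitution $t\mapsto t^{p^{e_i}},\,z\mapsto z^{p^{e_i}}$ sends the $t^0$-coefficient of $I_{\la^i}(t,z)$, which is $\on{CT}_t(I_{\la^i})(z)$, to $\on{CT}_t(I_{\la^i})(z^{p^{e_i}})$. This is precisely the right side of \eqref{CII}.

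The main obstacle I expect is the bookkeeping in the ``only $\al^i=0$ contributes'' step: one has to apply the admissibility definition to the correct nested windows $(N_{e_i},N_{e_i+1},\dots,N_{e_{i+1}-1})$ and argue by descending induction on the block index, carefully tracking that after clearing the top block the residual relation among the lower $\al^j$ again has the shape covered by an admissibility window. Everything else — the Newton-polytope containments, the termwise constant-term expansion, and the behavior of coefficients under $t\mapsto t^{p^k}$ — is formal. I would therefore organize the write-up as: (1) record $N(I_{\la^i})\subset N(\La^i_0)+pN(\La^i_1)+\dots$; (2) expand the constant term of the product as a sum over exponent tuples with $\sum p^{e_i}\al^i=0$; (3) prove by descending induction, using Definition \ref{defn}, that every such tuple is identically zero; (4) conclude by identifying the surviving term with the right-hand product.
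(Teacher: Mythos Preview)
Your overall strategy is sound and essentially matches the paper's: both arguments hinge on the containment
\[
N(I_{\la^i})\subset N(\La_{e_i})+pN(\La_{e_i+1})+\dots+p^{l(\la^i)-1}N(\La_{e_{i+1}-1})
\]
(your step (1)) and the observation that only the constant-in-$t$ pieces of each factor can contribute to the constant term of the product (your step (2)). But step (3) as written does not work. After writing $p^{e_i}\al^i=-\sum_{j<i}p^{e_j}\al^j$ for the \emph{largest} $i$ with $\al^i\ne0$, you ``divide by $p^{e_i}$'' and then ``rearrange'' to get $\al^i+p\cdot(\text{stuff})=0$; but the exponents $e_j-e_i$ are negative, so the right-hand side is not in $\Z^r$, and no such rearrangement is possible. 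Accordingly the admissibility window you invoke, $(N_{e_i}+pN_{e_i+1}+\dots)\cap p\,\Z^r=\{0\}$, is the wrong one: nothing in the relation tells you that $\al^i$ lies in $p\,\Z^r$.

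The fix is to use admissibility of the \emph{lower} block, not the top one. The sum $\sum_{j<i}p^{e_j}\al^j$ lies in $N(\La_0)+pN(\La_1)+\dots+p^{e_i-1}N(\La_{e_i-1})$ and equals $-p^{e_i}\al^i\in p^{e_i}\Z^r$; admissibility of $(N(\La_0),\dots,N(\La_{e_i-1}))$ then forces this sum to vanish, whence $\al^i=0$. Equivalently\,---\,and this is exactly how the paper proceeds\,---\,peel off the \emph{first} factor rather than the last: since all factors with $i\ge2$ have $t$-exponents in $p^{l(\la^1)}\Z^r$, while $N(I_{\la^1})\cap p^{l(\la^1)}\Z^r=\{0\}$ by admissibility, the constant term of the product splits as $\on{CT}_t(I_{\la^1})(z)$ times the constant term of the remaining product in the variables $t^{p^{l(\la^1)}}$, $z^{p^{l(\la^1)}}$; then induct on~$s$. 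So your plan goes through once you reverse the direction of the peeling in step~(3).
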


\begin{proof} 
We have
\bea
N(I_{\la^1}(t,z)) \subset 
N(\La_0(t,z))
+  pN(\La_1(t,z)) + \dots + p^{l(\la^1)-1} N(\La_{l(\la^1)-1}(t,z)).
\eea
Hence
\bea
N(I_{\la^1}(t,z))\cap p^{l(\la^1)}\Z^r = \{0\}
\eea
and
\bea
&&
\on{CT}_t\!\Big(\prod_{i=1}^s
I_{\la^i}(t^{p^{l(\la^1)+\dots +l(\la^{i-1})}},z^{p^{l(\la^1)+\dots+l(\la^{i-1})}})\Big)\!(z)
=
\\
&&
\phantom{aaaaaa}
= \on{CT}_t\big(I_{\la^1}(t,z)\big)(z)
\on{CT}_t\!\Big(
\prod_{i=2}^s
I_{\la^i}\big(t^{p^{l(\la^2)+\dots+l(\la^{i-1})}},z^{p^{l(\la^2)+\dots+l(\la^{i-1})}}\big)\Big)
\big(z^{p^{l(\la^1)}}\big).
\eea
Thus by induction on $s$ we prove the statement.
\end{proof}

\begin{cor}
\label{cor Cp}
We have
\bean
\label{Cp}
&&
\\
\notag
&&
\on{CT}_t\big(\tilde \la\big)(z)
 =\sum_{\la=\la^1*\dots *\la^s}
\on{CT}_t(I_{\la^1})(z)\cdot
\on{CT}_t(I_{\la^2})(z^{p^{l(\la^1)}})
\cdots
\on{CT}_t(I_{\la^s})(z^{p^{l(\la^1)+\dots+l(\la^{s-1})}}),
\eean
where the sum is over the set of all decompositions of $\la$ into a product of tuples.
\end{cor}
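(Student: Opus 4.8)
The plan is to obtain Corollary~\ref{cor Cp} by applying the constant-term operator $\on{CT}_t$ to the decomposition of $\tilde\la$ supplied by Lemma~\ref{lem decomp} and then invoking the multiplicativity statement of Lemma~\ref{lem 1pt} summand by summand (here, as throughout this subsection, $\la$ is assumed admissible). First I would recall that Lemma~\ref{lem decomp} expresses $\tilde\la(t,z)$ as the finite sum
\be
\tilde \la(t,z)=\sum_{\la=\la^1*\dots *\la^s}\ \prod_{i=1}^s
I_{\la^i}\big(t^{p^{l(\la^1)+\dots +l(\la^{i-1})}},z^{p^{l(\la^1)+\dots+l(\la^{i-1})}}\big),
\ee
the sum ranging over all ways of writing $\la$ as a concatenation of consecutive subtuples. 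Since $\on{CT}_t$ merely extracts the coefficient of $t^0$, it is linear and hence commutes with this finite sum, so the task reduces to evaluating $\on{CT}_t$ on each individual product.

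Second, I would check that every factor $\la^i$ occurring in such a decomposition is itself an admissible tuple in the sense of Definition~\ref{defn2}: the inequalities of Definition~\ref{defn} required for the Newton polytopes attached to $\la^i$ form a subfamily of those that hold for $\la$, simply because $\la^i$ is built from consecutive entries of $\la$. Consequently Lemma~\ref{lem 1pt}, in particular formula~\eqref{CII}, applies to each summand and yields
\be
\on{CT}_t\!\Big(\prod_{i=1}^s
I_{\la^i}\big(t^{p^{l(\la^1)+\dots +l(\la^{i-1})}},z^{p^{l(\la^1)+\dots+l(\la^{i-1})}}\big)\Big)\!(z)
=\prod_{i=1}^s\on{CT}_t\big(I_{\la^i}(t,z)\big)\big(z^{p^{l(\la^1)+\dots+l(\la^{i-1})}}\big).
\ee
Summing these identities over all decompositions $\la=\la^1*\dots*\la^s$ and using linearity of $\on{CT}_t$ once more then produces exactly \eqref{Cp}.

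The entire argument is bookkeeping on top of the two preceding lemmas, so I do not anticipate a genuine obstacle. The one point that deserves a line of justification is the hereditary property of admissibility just used, which is precisely what licenses the term-by-term application of Lemma~\ref{lem 1pt}; the Frobenius substitutions $z\mapsto z^{p^{l(\la^1)+\dots+l(\la^{i-1})}}$ in the $z$-variables cause no trouble, since $\on{CT}_t$ acts only on the $t$-variables and passes the $z$-twists through unchanged.
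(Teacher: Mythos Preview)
Your proposal is correct and matches the paper's own (implicit) argument: the corollary is stated immediately after Lemma~\ref{lem 1pt} with no separate proof, precisely because it follows by applying $\on{CT}_t$ to the decomposition of Lemma~\ref{lem decomp} and invoking Lemma~\ref{lem 1pt} term by term. One small remark: your hereditary-admissibility check, while true, is not strictly needed at this stage, since Lemma~\ref{lem 1pt} is already formulated for an admissible $\la$ together with an arbitrary decomposition $\la=\la^1*\cdots*\la^s$, so it applies directly; the hereditary property is what drives the induction \emph{inside} the proof of Lemma~\ref{lem 1pt} rather than something you must verify again here.
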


\subsection{Dwork congruence for tuples of Laurent polynomials}

\begin{thm}
\label{thm cg}
Let $a,b,c$ be tuples of  Laurent polynomials in $t,z$ with coefficients in $\Z_p$,
where $b,c,a'$ can be empty.
Assume that the tuples $a*b, a*c, a'*b, a'*c$ of Laurent polynomials are 
admissible. Then
\bean
\label{Da}
\phantom{aaaaaa}
\on{CT}_t\big(\widetilde{a*b}\big)\!(z)\, \on{CT}_t\big(\widetilde{a'*c}\big)\!(z^p)
\,\equiv\,
\on{CT}_t\big(\widetilde{a'*b}\big)\!(z^p) \,\on{CT}_t\big(\widetilde{a*c}\big)\!(z)
\pmod{p^{l(a)}}.
\eean

\end{thm}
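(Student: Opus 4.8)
The plan is to reduce Theorem~\ref{thm cg} to a purely combinatorial identity involving the decompositions of tuples, using the machinery of the polynomials $I_\la$ developed in the preceding subsections. The key idea is that, by Corollary~\ref{cor Cp}, each of the four constant terms $\on{CT}_t(\widetilde{a*b})$, $\on{CT}_t(\widetilde{a'*c})$, $\on{CT}_t(\widetilde{a'*b})$, $\on{CT}_t(\widetilde{a*c})$ expands as a sum over decompositions of the respective tuple, with each summand a product of factors of the form $\on{CT}_t(I_{\la^i})(z^{p^{\bullet}})$. Since $I_{\la^i} \equiv 0 \pmod{p^{l(\la^i)-1}}$ by \eqref{equiv I}, a product $\prod_i \on{CT}_t(I_{\la^i})$ is divisible by $p^{\sum_i (l(\la^i)-1)}$; so modulo $p^{l(a)}$ only those decompositions survive in which all but possibly one factor has length~$1$ (equivalently, at most one $\la^i$ has $l(\la^i)\geq 2$, and if there are several "long" factors the term is negligible). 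First I would set up this vanishing bookkeeping carefully, noting that a length-$1$ tuple $(\La)$ contributes the factor $\on{CT}_t(I_{(\La)}) = \on{CT}_t(\La)$ (since $S_1^{\on{ind}}=\{(0)\}$ and $R_{(0),(\La)}=\La$), so the "trivial" part of each product is just an honest product of constant terms of the original Laurent polynomials appearing in the tuple.

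Second, I would organize the surviving contributions by the position of the unique long block. Write $a = (\La_0,\dots,\La_{l-1})$ with $l = l(a)$, so $a' = (\La_1,\dots,\La_{l-1})$, and write $P(z) := \prod_{j}\on{CT}_t(\La_j)(z^{p^{j}})$ for the "fully trivial" decomposition (indices and shifts adjusted per tuple). Modulo $p^{l(a)}$ the expansion of $\on{CT}_t(\widetilde{a*b})(z)$ becomes a main term coming from fully-trivial decompositions plus, for each way of grouping a consecutive sub-block into one indecomposable long factor $\la^i$ of length $k$ (contributing $p^{k-1}\mid$), the corresponding correction. The crucial structural point is that the set of decompositions of $a*b$ and of $a'*b$ differ in a controlled way — removing the head $\La_0$ of $a$ shifts everything — and likewise for $a*c$ versus $a'*c$. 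I would then match, term by term, the long-block corrections on the left-hand side product $\on{CT}_t(\widetilde{a*b})\cdot\on{CT}_t(\widetilde{a'*c})$ against those on the right-hand side $\on{CT}_t(\widetilde{a'*b})\cdot\on{CT}_t(\widetilde{a*c})$: a long block lying entirely inside the shared tail $(\La_1,\dots)$ or inside $b$ or inside $c$ appears symmetrically on both sides, while a long block that straddles the boundary at position $0$ (i.e. involves $\La_0$) appears on the LHS only through the factor $\on{CT}_t(\widetilde{a*b})$ and on the RHS only through $\on{CT}_t(\widetilde{a*c})$; here one uses that such a block's contribution factors, modulo the higher power of $p$, as (something depending only on $a$-data) times (the fully-trivial constant-term product for $b$, resp. $c$), and the $b$-versus-$c$ halves cancel against the complementary $\on{CT}_t(\widetilde{a'*c})$, resp. $\on{CT}_t(\widetilde{a'*b})$, factors. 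This is exactly Mellit's telescoping; adapting it to the $z$-dependent setting only requires tracking the Frobenius twists $z\mapsto z^p$ correctly, and admissibility of all four tuples guarantees (via Lemma~\ref{lem 1pt}) that every constant term in sight genuinely factors as a product over the decomposition.

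The main obstacle, I expect, is the precise combinatorial matching in the straddling case: one must show that the total contribution to $\on{CT}_t(\widetilde{a*b})(z)\,\on{CT}_t(\widetilde{a'*c})(z^p)$ coming from decompositions of $a*b$ whose long block contains the head $\La_0$, summed against the trivial decomposition of $a'*c$, equals the corresponding total for $a*c$ against $a'*b$ — and that all \emph{other} discrepancies (two long blocks, a long block in the tail paired against a long block across the $z\leftrightarrow z^p$ divide, etc.) are individually $\equiv 0 \pmod{p^{l(a)}}$. Concretely I would prove: (1) any decomposition of $a*b$ or $a'*b$ with two or more long blocks, or with one long block of length $\geq 2$ sitting in the tail, when multiplied by \emph{any} decomposition of the partner tuple, is divisible by $p^{l(a)}$ unless the partner's decomposition is fully trivial and the block has length exactly making the total $p^{l(a)-1}$ — and even then the contributions pair up and cancel across the four-term alternating sum; (2) the fully-trivial $\times$ fully-trivial terms give $P_{a*b}(z)P_{a'*c}(z^p) = P_{a'*b}(z^p)P_{a*c}(z)$ identically, because both sides equal $\on{CT}_t(\La_0)(z)\prod_{j\geq 1}\on{CT}_t(\La_j)(z^{p^j})$ times the analogous products over $b$ and $c$; (3) the residual single-long-block-through-$\La_0$ terms on the two sides are literally equal after relabelling. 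Step~(1) is where the care is needed, since one must rule out cross-terms between a "long-in-tail" block on one side and the nontrivial part of the partner on the other; here the length bound $|m|\geq k-1$ from Lemma~\ref{lem ind} applied to \emph{both} factors simultaneously, together with $l(a*b) = l(a'*b)+1$, does the arithmetic. Once the bookkeeping in (1) is pinned down, (2) and (3) are formal, and the theorem follows.
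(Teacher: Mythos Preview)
There is a genuine gap. Your organizing claim --- that modulo $p^{l(a)}$ only decompositions with at most one factor of length $\ge 2$ survive --- is false. The $p$-power carried by a term in the product expansion is $p^{\,\sum_i (l(x^i)-1)+\sum_j(l(y^j)-1)}$, so survival requires only that this total excess be $<l(a)$, not that it be $\le 1$. For instance, take $l(a)=3$, $l(b)=2$, $l(c)=0$: the decomposition of $a*b$ into blocks of lengths $2,2,1$ contributes $p^{2}$, and paired with the fully trivial decomposition of $a'*c=a'$ yields a term divisible by $p^{2}$ but not by $p^{3}$. Your step~(1) therefore throws away terms that in fact survive, and your step~(3), which only matches single-long-block contributions, cannot account for them. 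The cross-term bookkeeping you flag as ``where the care is needed'' is not a matter of care but of a different mechanism altogether.

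The paper does not attempt to classify surviving terms by the number or position of long blocks. It instead builds a bijection between \emph{all} surviving terms on the two sides. Given a surviving pair of decompositions $a*b=x^{1}*\cdots*x^{q}$ and $a'*c=y^{1}*\cdots*y^{s}$, the survival inequality $q+s\ge l(a)+l(b)+l(c)$ forces, by a short pigeonhole count (Lemma~\ref{lem ij}), indices $i\ge 1$, $j\ge 0$ with
\[
l(x^{1})+\cdots+l(x^{i})=l(y^{1})+\cdots+l(y^{j})+1\le l(a),
\]
a common cut point lying inside the $a$-portion of both tuples. Swapping tails at this cut gives decompositions $a'*b=y^{1}*\cdots*y^{j}*x^{i+1}*\cdots*x^{q}$ and $a*c=x^{1}*\cdots*x^{i}*y^{j+1}*\cdots*y^{s}$, and the resulting term on the right-hand side is literally equal (including the Frobenius twists of $z$) to the original term on the left. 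Taking the minimal such $(i,j)$ makes this a bijection. Your step~(2) --- equality of the fully trivial products --- is correct and is a special case of this matching, but the general argument needs the cut-and-swap, not a long-block taxonomy.
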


\begin{proof} The left-hand side  and right-hand  side of \eqref{Da} are
\bean
\label{LHS}
\sum_{{a*b=x^1*\dots *x^q\atop a'*c=y^1*\dots *y^s}}
\prod_{i=1}^q
\on{CT}_t(I_{x^i})(z^{p^{l(x^1)+\dots+l(x^{i-1})}})
\prod_{i=1}^s
\on{CT}_t(I_{y^j})(z^{p^{1+l(y^1)+\dots+l(y^{j-1})}})
\eean
and
\bean
\label{RHS}
\sum_{{a'*b=x^1*\dots *x^q\atop a*c=y^1*\dots *y^s}}
\prod_{i=1}^q
\on{CT}_t(I_{x^i})(z^{p^{1+l(x^1)+\dots+l(x^{i-1})}})
\prod_{i=1}^s
\on{CT}_t(I_{y^j})(z^{p^{l(y^1)+\dots+l(y^{j-1})}}),
\eean
respectively.
Since we work modulo $p^{l(a)}$, all the terms with
\bea
\sum_{i=1}^q l(x^i) + \sum_{j=1}^s l(y^j) -q-s\geq l(a)
\eea
may be dropped off from consideration.
That inequality can be reformulated as
$l(a)+l(b)+l(a)+l(c)-1 - q-s \geq l(a)$, equivalently, as
\bean
\label{ineq}
l(a)+l(b)+l(c) \geq q+s+1.
\eean

Let us prove that the remaining terms in both expressions are in a bijective 
correspondence such that the corresponding terms are \emph{equal}.

\vsk.2>

Namely, take  one of the remaining summands on the left-hand side:
\bean
\label{rem s}
\prod_{i=1}^q
\on{CT}_t(I_{x^i})(z^{p^{l(x^1)+\dots+l(x^{i-1})}})
\prod_{i=1}^s
\on{CT}_t(I_{y^j})(z^{p^{1+l(y^1)+\dots+l(y^{j-1})}}),
\eean
the summand corresponding to the presentation
 $a*b=x^1*\dots *x^q$, $a'*c=y^1*\dots *y^s$.

\begin{lem}
\label{lem ij} There exist indices $i\geq 1$ and $j\geq 0$ such that
\bean
\label{iiS}
l(x^1)+\dots+l(x^i) = l(y^1)+\dots+l(y^j) + 1\leq l(a).
\eean

\end{lem}

\begin{proof}
If $l(x^1)=1$, then $i=1$ and $j=0$ are the required indices.

Assume that $l(x^1)>1$ and the required  $i$, $j$ do not exist.
Then each number in $\{2, \dots,l(a)\}$
cannot be represented simultaneously as 
$l(x^1)+\dots+l(x^i)$ and $l(y^1)+\dots+l(y^j) + 1$.
Therefore the sum of the total number of $i\geq 1$, such that 
$l(x^1)+\dots+l(x^i)\leq l(a)$, and the total number of
$j\geq 1$, such that $l(y^1)+\dots+l(y^j) + 1\leq l(a),$
is at most $l(a)-1$. The number of remaining $i$ 
is at most $l(b)$ and the number of remaining $j$ is at most $l(c)$.
 Therefore, $q+s \leq l(a) - 1 + l(b) + l(c)$, which is the same as \eqref{ineq}. Hence  
 the corresponding summand must have been dropped off. This establishes the
  existence of indices $i$ and $j$ required.
\end{proof}

Now we return to the remaining summand \eqref{rem s}.  
Choose the minimal indices $i\geq 1$ and $j\geq 0$ such that
\eqref{iiS} holds. Then it is easy to see that
\bean
\label{nde}
\phantom{aaa}
a'*b=y^1*\dots *y^j *x^{i+1}*\dots *x^q, \qquad
a*c =x^1*\dots *x^i * y^{j+1}*\dots *x^s,
\eean
and the summand in \eqref{RHS} corresponding to the presentations in \eqref{nde} equals
the product in \eqref{rem s}. This clearly gives the desired bijection.
\end{proof}

\section{$p^s$-Approximation of a hypergeometric integral}
\label{sec 3}

Let $\al,\beta, \ga$ be rational numbers with $|\al|_p=|\beta|_p=|\ga|_p=1$. 
Consider a hypergeometric integral
\bean
\label{I ga}
I^{(C)}(x) =\int_C t^\al(t-1)^\beta (t-x)^\ga dt
\eean
where  $C\subset \C-\{0,1,x\}$  is a contour on which the integrand  takes its initial value when $t$ encircles $C$.
The function $I^{(C)}(x)$ satisfies the hypergeometric differential equation
\bean
\label{hye}
x(1-x) I'' +((\al+\beta+2\ga)x - (\al+\ga))I' -\ga(\al+\beta+\ga+1) I=0.
\eean
This follows from Stokes' theorem and the following identity of differential forms. Denote
$\Phi(t,x) = t^\al(t-1)^\beta (t-x)^\ga$,
\bea
\mc D = 
x(1-x) \frac{d^2}{d x^2} +((\al+\beta+2\ga)x - (\al+\ga))\frac{d}{d x} 
-\ga(\al+\beta+\ga+1).
\eea
 Then
\bean
\label{Stokes}
&&
d_t\Big(\ga \frac{t(t-1)}{t-x}\Phi(t,x)\Big)
= \mc D \, \Phi(t,x) dt.
\eean

The differential equation \eqref{hye} turns into the standard hypergeometric differential equation
\bean
\label{sde}
x(1-x)I''+(c-(a+b+1)x)I'-abI=0
\eean
if  $\al=a-c$, $\beta=c-b-1$, $\ga=-a$. For a suitable choice of $C$ and multiplication of the integral  by a constant,
the integral in \eqref{I ga} can be expanded as a power series
$$
_2F_1(a,b;c;x)
={}_2F_1\biggl(\begin{matrix} a, \, b \\ c \end{matrix}\biggm| x\biggr)
=\sum_{k=0}^\infty\frac{(a)_k(b)_k}{k!(c)_k}x^k\,.
$$
Here 
$(a)_n=\Gamma(a+n)/\Gamma(a)=\prod_{k=0}^{n-1}(a+k)$ stands for Pochhammer's symbol.

We consider the following $p^s$-approximation of the integral in \eqref{I ga}. 
Given a positive integer $s$, let $1\leq \al_s, \beta_s, \ga_s \leq p^s$ be the unique positive integers
such that 
\bean
\label{eq abg}
\al_s\equiv \al, \quad
\beta_s\equiv \beta, \quad \ga_s\equiv\ga \qquad \pmod{p^s}.
\eean
Define the {\it master polynomial}
\bean
\label{Phis}
\Phi_s(t,x) = t^{\al_s}(t-1)^{\beta_s} (t-x)^{\ga_s}
\eean
and the {\it $p^s$-approximation polynomial} $I_s(x)$ as the coefficient of $t^{p^s-1}$
in the master polynomial $\Phi_s(t,x)$. Then
\bean
\label{Is ap}
I_s(x) = (-1)^{\al_s+\beta_s+\ga_s-p^s+1}\sum_{k_1+k_2=\al_s+\beta_s+\ga_s-p^s+1}
\binom{\beta_s}{k_1}\binom{\ga_s}{k_2}x^{k_2} .
\eean
The polynomial $I_s(x)$ has integer coefficients.

\begin{thm}
\label{thm m}
The polynomial $I_s(x)$ is a solution of the hypergeometric differential equation
\eqref{hye} modulo $p^s$,
\bean
\label{s appr}
\mc D I_s(x) \,\in\, p^s \Z_p[x]\,.
\eean
\end{thm}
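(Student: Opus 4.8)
The plan is to lift the Stokes-type identity \eqref{Stokes} to the master polynomial $\Phi_s(t,x)$ and then to exploit the trivial fact that the coefficient of $t^{p^s-1}$ in the $t$-derivative of a polynomial is automatically divisible by $p^s$. To this end, let $\mc D_s$ denote the operator obtained from $\mc D$ by replacing $\al,\beta,\ga$ throughout by the positive integers $\al_s,\beta_s,\ga_s$ of \eqref{eq abg}. The computation establishing \eqref{Stokes} is a purely formal differentiation identity, valid for arbitrary values of the exponents; performing it with the exponents $\al_s,\beta_s,\ga_s$ and noting that, since $\ga_s\ge1$, the ``primitive''
\[
\Psi_s(t,x):=\ga_s\,\frac{t(t-1)}{t-x}\,\Phi_s(t,x)=\ga_s\,t^{\al_s+1}(t-1)^{\beta_s+1}(t-x)^{\ga_s-1}
\]
is an honest polynomial in $\Z[t,x]$, one obtains $\mc D_s\Phi_s(t,x)=\der_t\Psi_s(t,x)$ (possibly after replacing $\Psi_s$ by $-\Psi_s$; the overall sign plays no role).

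Next I would compare $\mc D$ and $\mc D_s$ modulo $p^s$. By \eqref{eq abg} the differences $\al_s-\al$, $\beta_s-\beta$, $\ga_s-\ga$ lie in $p^s\Z_p$, hence so do $(\al_s+\beta_s+2\ga_s)-(\al+\beta+2\ga)$, $(\al_s+\ga_s)-(\al+\ga)$ and $\ga_s(\al_s+\beta_s+\ga_s+1)-\ga(\al+\beta+\ga+1)$, while the leading coefficient $x(1-x)$ is common to both operators. Thus $\mc D-\mc D_s$ is a differential operator whose coefficients lie in $p^s\Z_p[x]$, and since $\Phi_s(t,x)\in\Z[t,x]$ we get $(\mc D-\mc D_s)\Phi_s(t,x)\in p^s\Z_p[t,x]$.

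Combining the two steps,
\[
\mc D\,\Phi_s(t,x)=\der_t\Psi_s(t,x)+(\mc D-\mc D_s)\Phi_s(t,x).
\]
Writing $\Psi_s(t,x)=\sum_k c_k(x)\,t^k$ with $c_k(x)\in\Z[x]$, the coefficient of $t^{p^s-1}$ in $\der_t\Psi_s(t,x)$ is $p^s\,c_{p^s}(x)\in p^s\Z_p[x]$, and the coefficient of $t^{p^s-1}$ in $(\mc D-\mc D_s)\Phi_s(t,x)$ lies in $p^s\Z_p[x]$ by the previous paragraph. Since $\mc D$ differentiates only in $x$, it commutes with the extraction of the coefficient of $t^{p^s-1}$; hence $\mc D I_s(x)$ is exactly that coefficient of $\mc D\Phi_s(t,x)$, which we have just shown lies in $p^s\Z_p[x]$. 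This is \eqref{s appr}.

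The only genuinely delicate point is the first step: one must recognise that \eqref{Stokes} is a formal identity that survives the substitution $\al\mapsto\al_s$, $\beta\mapsto\beta_s$, $\ga\mapsto\ga_s$, and that the resulting primitive $\Psi_s$ is a bona fide polynomial precisely because the normalisation in \eqref{eq abg} forces $\al_s,\beta_s,\ga_s\ge1$ (in particular the factor $(t-x)^{\ga_s-1}$ is polynomial). After that, everything is elementary bookkeeping, and the sign ambiguity in \eqref{Stokes} is immaterial since only divisibility by $p^s$ is at issue.
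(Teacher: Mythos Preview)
Your proof is correct and follows exactly the approach the paper intends: the paper's one-line proof ``follows from formula \eqref{Stokes}'' is precisely the argument you have spelled out, namely that the Stokes identity is formal in the exponents, hence holds with $\al_s,\beta_s,\ga_s$ in place of $\al,\beta,\ga$, and then the coefficient of $t^{p^s-1}$ in a $t$-derivative of a polynomial is automatically divisible by $p^s$. You have simply made explicit the bookkeeping (the comparison of $\mc D$ with $\mc D_s$ modulo $p^s$ and the polynomiality of $\Psi_s$ thanks to $\ga_s\ge1$) that the paper leaves implicit.
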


\begin{proof}
The theorem follows from formula \eqref{Stokes}.
\end{proof}

In this paper we prove Dwork-type congruences for the $p^s$-approximation polynomials $I_s(x)$
in several basic examples and leave general considerations for another occasion.

\vsk.2>
For more general versions of the $p^s$-approximation construction see in \cite{SV2, V4}.

\section{Function $_2F_1\Big(\frac12,\frac 12; 1,x\Big)$}
\label{sec 4}

\subsection{Polynomials $P_s(x)$}
\label{sec 1.1}

The function
\bean
\label{111}
_2F_1\Big(\frac12,\frac 12; 1; x\Big) = \frac 1\pi\,\int_1^\infty t^{-1/2}(t-1)^{-1/2}(t-x)^{-1/2} dt
=\sum_{k = 0}^\infty\binom{-1/2}{k}^2x^k
\eean
satisfies the hypergeometric differential equation 
\bean
\label{HE}
x(1-x) I'' +(1-2x)I'-\frac14I=0.
\eean
 Define the master polynomial
\bean
\label{mp1/2}
\Phi_{p^s}(t,x) = t^{(p^s-1)/2}(t-1)^{(p^s-1)/2}(t-x)^{(p^s-1)/2}.
\eean

\vsk.2>
\noindent
The number 
$M=\frac{p^s-1}2=\frac{p-1}2+\frac{p-1}2 p+\dots +\frac{p-1}2 p^{s-1}$
 is the unique positive integer such that $1\leq M\leq p^s$ and 
$M\equiv -1/2$ $\pmod{p^s}$.
Define the $p^s$-approximation polynomial $P_s(x)$ as
the coefficient of $t^{p^s-1}$ in the master polynomial $\Phi_{p^s}(t,x)$. Then
\bean
\label{Ps}
P_s(x) = (-1)^{(p^s-1)/2} \sum_{k=0}^{(p^s-1)/2}\binom{(p^s-1)/2}{k}^2 x^k.
\eean
Define $P_0(x)=1$.

Recall the hypergeometric function ${}_2F_1(a,b;c;x)$.  Then
\bean
\label{p hy}
P_s(x) = (-1)^{(p^s-1)/2} {}_2F_1\Big(\frac{1-p^s}2, \frac{1-p^s}2; 1; x\Big).
\eean
The polynomial $P_s(x)$ is a solution of the hypergeometric equation \eqref{HE} modulo $p^s$. This 
follows from Theorem  \ref{thm m} or  from formula \eqref{p hy}.

\subsection{Baby congruences}

Let
$\phi_s(x)=(x+1)^{(p^s-1)/2}$. Then
\bean
\label{bin}
\phi_{s+1}(x)\phi_{s-1}(x^p)\equiv\phi_s(x)\phi_s(x^p)\pmod{p^s}.
\eean
This follows from  $(x+1)^{p^s}\equiv(x^p+1)^{p^{s-1}}\pmod{p^s}$.

\begin{lem}
\label{lem mm=mm}  The master polynomials $\Phi_{s}(t,x)$ satisfy 
the baby congruence
\bean
\label{mm=mm}
\Phi_{s+1}(t,x)\Phi_{s-1}(t^p,x^p) \equiv 
\Phi_{s}(t,x)\Phi_{s}(t^p,x^p) \pmod{p^s}.
\eean
\end{lem}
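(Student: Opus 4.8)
The plan is to reduce the baby congruence \eqref{mm=mm} to the elementary binomial congruence for $\phi_s$ in \eqref{bin}, applied separately to each of the three linear factors making up the master polynomial. Recall that
\be
\Phi_s(t,x) = t^{\al_s}(t-1)^{\beta_s}(t-x)^{\ga_s},
\ee
where $\al_s,\beta_s,\ga_s$ are the representatives in $\{1,\dots,p^s\}$ of $\al,\beta,\ga$ modulo $p^s$. The key point is that $\al_{s+1}+\al_{s-1}$ and $2\al_s$ differ by a multiple of $p^{s-1}$ (both are $\equiv 2\al \pmod{p^{s-1}}$, and in fact the exponents are chosen from intervals so that the discrepancy is a controlled multiple of $p^{s-1}$), which is precisely what is needed to invoke a Frobenius-type congruence of the shape $u^{p^s}\equiv u^{p\cdot p^{s-1}}\pmod{p^s}$.

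First I would treat the factor $t^{\al}$. One has $\al_{s+1}\equiv \al_{s-1}\equiv \al_s\equiv\al\pmod{p^{s-1}}$, so write $\al_{s+1}=\al_s+ap^{s-1}$ and $\al_{s-1}=\al_s-bp^{s-1}$ with $a,b\ge 0$ bounded. Then the contribution of the first factors to the left-hand side of \eqref{mm=mm} is $t^{\al_{s+1}}(t^p)^{\al_{s-1}}=t^{\al_{s+1}+p\,\al_{s-1}}$, and to the right-hand side $t^{\al_s}(t^p)^{\al_s}=t^{\al_s+p\,\al_s}$; the exponents differ by $(\al_{s+1}-\al_s)+p(\al_{s-1}-\al_s)=(a-pb)p^{s-1}$. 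For a monomial this does not immediately give a mod $p^s$ congruence, so instead I would not split factor-by-factor naively but rather argue multiplicatively: it suffices to prove, for each $u\in\{t,\,t-1,\,t-x\}$ (with $t-x$ replaced by $t^p-x^p$ in the $x^p$-slot, and noting $(t-x)^p\equiv t^p-x^p\pmod p$ is \emph{not} enough — we need mod $p^s$), the congruence
\be
u(t,x)^{\al_{s+1}}\,u(t^p,x^p)^{\al_{s-1}} \equiv u(t,x)^{\al_s}\,u(t^p,x^p)^{\al_s}\pmod{p^s},
\ee
and then multiply the three such congruences together (using that all factors lie in $\Z_p[t,x^{\pm}]$ and products of congruences mod $p^s$ are congruences mod $p^s$). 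The first two cases $u=t$ and $u=t-1$ are instances of \eqref{bin} after the substitution $x\mapsto$ (the relevant variable): indeed $(x+1)^{(p^s-1)/2}$ is just a special value of exponent, and the same proof — from $(x+1)^{p^s}\equiv(x^p+1)^{p^{s-1}}\pmod{p^s}$ — works verbatim for exponents $\al_s$ adjusted across consecutive $s$, since $\al_{s+1}=\al_s+ap^{s-1}$ forces $u^{\al_{s+1}}=u^{\al_s}\cdot u^{ap^{s-1}}$ and similarly $u^{\al_{s-1}}=u^{\al_s}/u^{bp^{s-1}}$, and the Frobenius lift $u(t^p,x^p)\equiv u(t,x)^p\pmod p$ bootstraps, via the standard lemma $v\equiv w\pmod{p^k}\Rightarrow v^p\equiv w^p\pmod{p^{k+1}}$, to $u(t^p,x^p)^{p^{s-1}}\equiv u(t,x)^{p^s}\pmod{p^s}$.

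The cleanest route, which I would actually write up, is therefore: prove the single lemma that for any $u\in\Z_p[t,x^{\pm1}]$ and any integers $e_{s}$ with $e_{s+1}+p\,e_{s-1}=e_s+p\,e_s+O(p^s)$ — more precisely with $e_{s+1}\equiv e_{s-1}\equiv e_s\pmod{p^{s-1}}$ and $e_s$ the intended exponents — one has $u^{e_{s+1}}u(t^p,x^p)^{e_{s-1}}\equiv u^{e_s}u(t^p,x^p)^{e_s}\pmod{p^s}$; this is exactly the mechanism behind \eqref{bin}. Apply it with $u=t$, $u=t-1$, $u=t-x$ and $e_s=\al_s,\beta_s,\ga_s$ respectively, then multiply. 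The main obstacle — and the only place requiring care — is bookkeeping the exponents $\al_s,\beta_s,\ga_s$: because they are reduced into $\{1,\dots,p^s\}$ rather than being literal truncations $\tfrac{p-1}2(1+p+\dots+p^{s-1})$, one must check that the "carry" discrepancies $\al_{s+1}-\al_s$ and $\al_s-\al_{s-1}$ are genuinely divisible by $p^{s-1}$ (they are, since all three are $\equiv\al\pmod{p^{s-1}}$ and lie in $\{1,\dots,p^s\}\subset\{1,\dots,p^{s+1}\}$, so their pairwise differences are multiples of $p^{s-1}$ of bounded size), and that multiplying three mod-$p^s$ congruences of polynomials with $\Z_p$ coefficients preserves the modulus — which is immediate. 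Everything else is the Frobenius-congruence lemma $v\equiv w\ (p^k)\Rightarrow v^p\equiv w^p\ (p^{k+1})$ iterated $s-1$ times, exactly as in the proof of \eqref{bin}.
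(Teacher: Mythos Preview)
Your core approach\,---\,apply the binomial congruence underlying \eqref{bin} to each linear factor $t$, $t-1$, $t-x$ separately and then multiply\,---\,is exactly what the paper does. Note, however, that in this section the master polynomial is the specific one from \eqref{mp1/2}, with \emph{all three} exponents equal to $(p^s-1)/2$; so \eqref{bin} applies verbatim to each factor (via the identity $(u)^{p^s}\equiv u(t^p,x^p)^{p^{s-1}}\pmod{p^s}$ for $u=t,\,t-1,\,t-x$), and the one-line proof in the paper is complete. Your bookkeeping with general $\al_s,\beta_s,\ga_s$ is not needed here.

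More seriously, the ``single lemma'' you propose is \emph{false} as stated. You claim that for any $u\in\Z_p[t,x]$ and integers $e_s$ with $e_{s+1}\equiv e_{s-1}\equiv e_s\pmod{p^{s-1}}$ one has
\be
u(t,x)^{e_{s+1}}\,u(t^p,x^p)^{e_{s-1}}\equiv u(t,x)^{e_s}\,u(t^p,x^p)^{e_s}\pmod{p^s}.
\ee
Take $p=3$, $s=2$, $e_1=e_2=1$, $e_3=1+p^2=10$ (these are the first three truncations of any $\al\in\Z_p$ with $p$-adic digits $1,0,1,\dots$), and $u=t$: the claim becomes $t^{13}\equiv t^4\pmod 9$, which fails. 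What actually makes the factor-by-factor argument work for the exponents $(p^s-1)/2$ is the special identity
\be
e_{s+1}-e_s \,=\, p\,(e_s-e_{s-1}) \,=\, \tfrac{p-1}2\,p^s,
\ee
so that $u^{\,e_{s+1}-e_s}=(u^{p^s})^{(p-1)/2}\equiv(u(t^p,x^p)^{p^{s-1}})^{(p-1)/2}=u(t^p,x^p)^{\,e_s-e_{s-1}}\pmod{p^s}$. This is precisely the content of \eqref{bin}, and it relies on all $p$-adic digits of $-1/2$ being equal to $(p-1)/2$. For general $\al,\beta,\ga$ the digits vary and the factor-by-factor reduction breaks down; that is why in Section~\ref{sec 5}, for $p\equiv 2\pmod 3$, the paper must pair two different master families $\Phi_s,\Psi_s$ rather than a single one (see \eqref{hs}--\eqref{sh}). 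So drop the general framing: for this lemma, just cite \eqref{bin} three times and multiply.
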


\begin{proof}
The lemma follows from \eqref{bin}. 
\end{proof}

\subsection{Congruences for $P_s(x)$}

\begin{thm}
\label{conj B}
\label{conj Q}

The approximation polynomials $P_s(x)$ satisfy the congruence
\bean
\label{CO}
P_{s+1}(x) P_{s-1}(x^p) \equiv P_{s}(x) P_{s}(x^p) \pmod{p^s}.
\eean
\end{thm}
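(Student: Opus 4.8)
The plan is to deduce Theorem \ref{conj B} from Theorem \ref{thm cg}, the Dwork congruence for tuples of Laurent polynomials, by recognizing $P_s(x)$ as a constant term of a suitable Laurent polynomial. First I would rewrite the $p^s$-approximation polynomial: since $P_s(x)$ is the coefficient of $t^{p^s-1}$ in $\Phi_{p^s}(t,x)=t^{(p^s-1)/2}(t-1)^{(p^s-1)/2}(t-x)^{(p^s-1)/2}$, dividing through by $t^{p^s-1}$ shows that $P_s(x)$ equals $\on{CT}_t\big(\La(t,x)^{p^s-1}\big)$, where $\La(t,x) = (1-1/t)^{1/2}\cdot\dots$ — more precisely I want a single Laurent polynomial $\La$ with $\La^{m_0+m_1 p+\dots}$ having the right constant term. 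The cleanest choice is $\La(t,x) = t^{-1}(t-1)^{}(t-x)$ raised appropriately, but the exponents $(p^s-1)/2$ are not of the form $1+p+\dots+p^{s-1}$; rather, $(p^s-1)/2 = \frac{p-1}{2}(1+p+\dots+p^{s-1})$. So the natural move is to set $\La(t,x) = t^{-(p-1)/2}(t-1)^{(p-1)/2}(t-x)^{(p-1)/2}$ (a Laurent polynomial in $t$ with coefficients in $\Z$), so that $\La(t,x)^{1+p+\dots+p^{s-1}}$ has $t$-degree spread from $-(p^s-1)/2\cdot\tfrac{}{}$... and its constant term in $t$ is exactly $\pm P_s(x)$ up to a sign and a monomial shift. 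I would check this identification carefully, tracking the sign $(-1)^{(p^s-1)/2}$.

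Next I would verify the admissibility hypothesis. The Newton polytope of $\La(t,x)$ with respect to $t$ is an interval $[-(p-1)/2, (p-1)/2]\subset\R$, whose only interior lattice point is $\{0\}$, and one checks that the tuple $(\La,\La,\dots,\La)$ (with all entries equal to $\La$, viewed with $z=x$) is admissible in the sense of Definition \ref{defn2}: one needs $\big(N+pN+\dots+p^{j-i}N\big)\cap p^{j-i+1}\Z = \{0\}$ where $N=[-(p-1)/2,(p-1)/2]$, and indeed $N+pN+\dots+p^k N = [-(p^{k+1}-1)/2,(p^{k+1}-1)/2]$ contains no nonzero multiple of $p^{k+1}$. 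With this in hand, I would apply Theorem \ref{thm cg} with the tuples $a=(\La,\dots,\La)$ of length $s$, $b=(\La)$, $c$ empty (length $0$). Then $a*b$ has length $s+1$, $a'*c$ has length $s-1$, $a'*b$ has length $s$, $a*c$ has length $s$, and $\on{CT}_t(\widetilde{a*b})(x)$ unwinds — using $\tilde\la(t,x)=\La_0\La_1^p\cdots\La_{l-1}^{p^{l-1}}$ with all $\La_i=\La$ — to $\on{CT}_t\big(\La^{1+p+\dots+p^s}\big)(x)$, i.e. essentially $\pm P_{s+1}(x)$, and likewise for the others, with the $z\mapsto z^p$ substitutions producing the $x^p$ arguments. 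So Theorem \ref{thm cg} gives $\on{CT}_t(\widetilde{a*b})(x)\,\on{CT}_t(\widetilde{a'*c})(x^p)\equiv\on{CT}_t(\widetilde{a'*b})(x^p)\,\on{CT}_t(\widetilde{a*c})(x)\pmod{p^s}$, which is exactly \eqref{CO} after substituting the identifications and noting the sign factors $(-1)^{(p^{s+1}-1)/2}$ etc.\ match up on both sides.

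The main obstacle I expect is the bookkeeping in the first step: making the identification $\on{CT}_t(\La^{1+p+\dots+p^{s-1}}) = \pm P_s(x)$ precise, including the exact power of $t$ one must extract, the sign, and confirming that $\widetilde{a*b}$ (which is $\La\cdot\La^p\cdots\La^{p^s}$, with exponents summing to $1+p+\dots+p^s = (p^{s+1}-1)/(p-1)$) indeed raises $\La$ to the power whose $t$-constant-term is $P_{s+1}$. Here one uses $(p^{s+1}-1)/(p-1)\cdot\frac{p-1}{2} = (p^{s+1}-1)/2$, so the total exponent of $(t-1)$ and $(t-x)$ in $\La^{1+\dots+p^s}$ is $(p^{s+1}-1)/2$ and of $t^{-1}$ is the same — precisely matching $\Phi_{p^{s+1}}$ after a monomial rescaling, and the constant term in $t$ of $\La^{1+\dots+p^s}$ corresponds to the coefficient of $t^{(p^{s+1}-1)/2}$ in $(t-1)^{(p^{s+1}-1)/2}(t-x)^{(p^{s+1}-1)/2}$, which is indeed $\sum_k\binom{(p^{s+1}-1)/2}{k}^2 x^k$ up to sign. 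A secondary, minor, subtlety is handling the degenerate case $s=1$ (where $a'$ is empty and $P_0(x)=1$), which the theorem explicitly allows ($b,c,a'$ may be empty), and checking that $\on{CT}_t(\widetilde{\emptyset}) = 1$ by convention. Once these identifications are pinned down, the congruence \eqref{CO} is an immediate specialization of Theorem \ref{thm cg}, and I would also remark that the alternative, more elementary route via Lemma \ref{lem mm=mm} — lifting the baby congruence \eqref{mm=mm} for master polynomials through the coefficient-extraction map — works as well but does not by itself yield the modulus $p^s$ without the machinery of Section \ref{sec 2}.
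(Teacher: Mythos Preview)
Your proposal is correct and takes essentially the same approach as the paper: the paper deduces Theorem \ref{conj B} as the $k=0$ case of Theorem \ref{thm C_k}, whose proof for $k=0$ amounts precisely to writing $P_s(x)=\on{CT}_t\big(\hat\Phi_1(t,x)^{1+p+\dots+p^{s-1}}\big)$ with $\hat\Phi_1(t,x)=((t-1)(1-x/t))^{(p-1)/2}$ (your $\La$), checking admissibility of the constant tuple, and applying Theorem \ref{thm cg}. Your sign worries are unfounded, since $\on{CT}_t(\La^{1+\dots+p^{s-1}})=P_s(x)$ on the nose, but otherwise the argument and the paper's coincide.
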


This theorem follows from a more general Theorem \ref{thm C_k} below.

\vsk.2>

Using formula \eqref{p hy} we may rewrite \eqref{CO} as the congruence
\bean
\label{pF}
&
{}_2F_1\Big(\frac{1-p^{s+1}}2, \frac{1-p^{s+1}}2; 1; x\Big)
{}_2F_1\Big(\frac{1-p^{s-1}}2, \frac{1-p^{s-1}}2; 1; x^p\Big)
\equiv
\phantom{aaaaaaaaaa}
\\
\notag
&
\equiv
{}_2F_1\Big(\frac{1-p^s}2, \frac{1-p^s}2; 1; x\Big){}_2F_1\Big(\frac{1-p^s}2, \frac{1-p^s}2; 1; x^p\Big)
\pmod{p^s}.
\eean

Let $\al$ be a rational number which is a $p$-adic unit,
$\al = \al_0 + \al_1 p+ \al_2p^2+\cdots$.
Denote by $[\al]_s$ the sum of the first $s$ summands. Then congruence \eqref{pF} takes the form:

\bean
\label{-1/2s}
&
{}_2F_1\big([-\frac12]_{s+1}, [-\frac12]_{s+1};1;x\big)\,
{}_2F_1\big([-\frac12]_{s-1}, [-\frac12]_{s-1};1;x^p\big)
\equiv \phantom{aaaaaaaaaaaa}
\\
\notag
&
\equiv
{}_2F_1\big([-\frac 12]_{s}, [-\frac12]_{s};1;x\big)\,
{}_2F_1\big([-\frac12]_{s}, [-\frac12]_{s};1;x^p\big)
\pmod{p^s}.
\eean

\smallskip

\subsection{Coefficients of master polynomials}

Consider
\bean
\label{hat-Phi}
\hat\Phi_s(t,x)
&:=& t^{-(p^s-1)}\Phi_s(t,x)
= t^{-(p^s-1)/2}\big((t-1)(t-x)\big)^{(p^s-1)/2} 
\\ 
\nonumber
&=& \big((t-1)(1-x/t)\big)^{(p^s-1)/2}
=\sum_{j=-(p^s-1)/2}^{(p^s-1)/2} C_{s,j}(x)t^j,
\eean
where
\bea
C_{s,j}(x) = (-1)^{\frac{p^s-1}2-j}\sum_{m} \binom{\frac{p^s-1}2}{m+j}
 \binom{\frac{p^s-1}2}{m} x^m.
 \eea
In particular,  $C_{s,0}(x) = P_s(x)$.  Every coefficient $C_{s,j}(x)$ is a hypergeometric function:
\bea
C_{s,j}(x) = (-1)^{\frac{p^s-1}2-j} \binom{\frac{p^s-1}2}{j}
 {}_2F_1\Big( \frac{1-p^s}2, \frac{1-p^s}2+j; j+1; x\Big) \quad\text{for}\; j\ge0,
\eea
 while a hypergeometric expression in the case $j<0$ comes out from the following simple fact.

\begin{lem}
\label{lem ptp}
We have
$\hat\Phi_s(x/t,x) = \hat\Phi_s(t,x)$ and hence
\bean
\label{rel0}
C_{s,-j}(x) = x^j C_{s,j}(x).
\eean
\end{lem}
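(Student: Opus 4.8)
\textbf{Proof proposal for Lemma \ref{lem ptp}.}

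The plan is to verify the symmetry $\hat\Phi_s(x/t,x)=\hat\Phi_s(t,x)$ directly from the product formula in \eqref{hat-Phi} and then read off \eqref{rel0} by comparing Laurent coefficients. First I would substitute $t\mapsto x/t$ into the factored expression
\be
\hat\Phi_s(t,x)=\big((t-1)(1-x/t)\big)^{(p^s-1)/2}.
\ee
Under this substitution $t-1\mapsto x/t-1=-(t-x)/t$ and $1-x/t\mapsto 1-t=-(t-1)$, so the product $(t-1)(1-x/t)$ maps to $\big((t-x)/t\big)(t-1)=(t-1)(1-x/t)$, which is exactly the original. Raising to the power $(p^s-1)/2$ (an integer, since $p$ is odd) preserves the identity, giving $\hat\Phi_s(x/t,x)=\hat\Phi_s(t,x)$.

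For the coefficient relation, I would expand both sides in the Laurent series $\hat\Phi_s(t,x)=\sum_{j}C_{s,j}(x)t^j$. Substituting $t\mapsto x/t$ term by term gives $\hat\Phi_s(x/t,x)=\sum_j C_{s,j}(x)x^j t^{-j}=\sum_j C_{s,-j}(x)x^{-j}t^j$ after reindexing. Matching this against $\sum_j C_{s,j}(x)t^j$ and using the symmetry just proved yields $C_{s,j}(x)=C_{s,-j}(x)x^{-j}$, i.e.\ $C_{s,-j}(x)=x^j C_{s,j}(x)$, which is \eqref{rel0}.

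There is essentially no obstacle here: the only point requiring a moment's care is that the substitution $t\mapsto x/t$ is being applied to a Laurent polynomial, so one must check the finite range of exponents is respected (it is, since $j$ ranges symmetrically over $-(p^s-1)/2\le j\le (p^s-1)/2$) and that the exponent $(p^s-1)/2$ is a genuine nonnegative integer so that the factored form is legitimate. Both are immediate from $p$ being an odd prime.
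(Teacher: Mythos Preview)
Your proof is correct. The paper does not supply a proof of this lemma at all\,---\,it is stated as a simple fact and left to the reader\,---\,so your direct verification (substituting $t\mapsto x/t$ into the factored form $((t-1)(1-x/t))^{(p^s-1)/2}$ and then matching Laurent coefficients) is exactly the natural argument and fills in what the paper omits.
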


We expand congruence \eqref{mm=mm} into a congruence of 
polynomials $C_{s,j}(x)$. The constant term in $t$ gives us 
\bean
\label{c=c}
{\sum}_{k} C_{s+1,kp}(x)C_{s-1,-k}(x^p)
\equiv {\sum}_{k} C_{s,kp}(x)C_{s,-k}(x^p)
\pmod{p^s}.
\eean

\vsk.2>
The following Theorem \ref{thm C_k} establishes the congruences of individual pairs of terms
in \eqref{c=c}.

\begin{thm}
\label{thm C_k} For any $k$ appearing in \eqref{c=c} we have
\bean
\label{ex}
C_{s+1,kp}(x)C_{s-1,-k}(x^p) \equiv C_{s,kp}(x)C_{s,-k}(x^p) \pmod{p^s}.
\eean
In particular for $k=0$ we have congruence \eqref{CO}.
\end{thm}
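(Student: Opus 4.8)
The plan is to realize the pair of polynomials $\bigl(C_{s,kp}(x),\,C_{s,-k}(x^p)\bigr)$ as a constant-term pair extracted from an \emph{admissible} tuple of Laurent polynomials, so that Theorem \ref{thm cg} applies directly. Concretely, introduce an auxiliary variable $u$ (a ``bookkeeping'' variable that will eventually select the coefficient of $t^{j}$ via the substitution in Lemma \ref{lem ptp}) and consider the two-variable Laurent polynomial
\bean
\label{Lambda-def}
\La(t,x) = \hat\Phi_1(t,x) = \bigl((t-1)(1-x/t)\bigr)^{(p-1)/2},
\eean
whose $(p^s)$-th partial products $\La(t,x)\La(t^p,x^p)^p\cdots\La(t^{p^{s-1}},x^{p^{s-1}})^{p^{s-1}}$ reduce, modulo the baby congruence in Lemma \ref{lem mm=mm}, to $\hat\Phi_s(t,x)$. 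The Newton polytope of $\La(t,x)$ with respect to $t$ is the segment $[-(p-1)/2,(p-1)/2]$, which contains only the interior lattice point $\{0\}$; hence any constant tuple $(\La,\La,\dots,\La)$ is admissible in the sense of Definition \ref{defn2}, because $N_i+pN_{i+1}+\dots+p^{j-i}N_j\subset[-(p^{j-i+1}-1)/2,(p^{j-i+1}-1)/2]$ meets $p^{j-i+1}\Z$ only at $0$. This is the key structural observation that unlocks the general machinery.

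To isolate the coefficient $C_{s,j}(x)$ rather than just $C_{s,0}(x)=P_s(x)$, I would twist by a monomial: replace $\La(t,x)$ by $t^{-j}\La(t,x)$ when reading off $C_{s,j}$, or equivalently work with $\La_j(t,x):=t^{\,r_j}\La(t,x)$ for appropriate shifts $r_j$ chosen so that $\on{CT}_t$ of the relevant product picks out exactly $C_{s,kp}(x)$, and separately a second twist picking out $C_{s,-k}(x^p)$. The subtlety is that the two factors in \eqref{ex} live at different ``$p$-levels'' ($x$ versus $x^p$), so I would set up the tuples with a length-one tail handling the $x^p$-dependence: take $a=(\La_{k},\dots,\La_{k})$ of length $s$ with $b=()$ and $c=(\La_{-k})$ a single well-chosen polynomial, apply Theorem \ref{thm cg} to get
\bean
\label{apply-cg}
\on{CT}_t(\widetilde{a})(x)\,\on{CT}_t(\widetilde{a'*c})(x^p)
\equiv \on{CT}_t(\widetilde{a'})(x^p)\,\on{CT}_t(\widetilde{a*c})(x)
\pmod{p^{s}},
\eean
and then identify each constant term with the appropriate $C_{s',\pm k}$ using the expansion \eqref{hat-Phi}, the symmetry \eqref{rel0} of Lemma \ref{lem ptp}, and the baby congruence \eqref{mm=mm} to pass between $\hat\Phi_s$ and the partial product of $\La$'s. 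The admissibility of $a*b$, $a*c$, $a'*b$, $a'*c$ is inherited from the segment-polytope computation above, since monomial twists translate a segment and translation does not create new lattice points in $p^m\Z$ beyond a controllable shift — this is the one place where I would have to check that the chosen shifts $r_{\pm k}$ keep us inside the admissibility constraint, possibly forcing the restriction ``$k$ appearing in \eqref{c=c}'' (i.e.\ $|k|\le(p^s-1)/2$ roughly).

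The main obstacle I anticipate is precisely this combinatorial matching: verifying that, after the monomial twists needed to extract $C_{s,kp}$ and $C_{s,-k}$, the resulting tuples are still admissible and that the four constant terms produced by Theorem \ref{thm cg} are \emph{exactly} the four polynomials $C_{s+1,kp}(x)$, $C_{s-1,-k}(x^p)$, $C_{s,kp}(x)$, $C_{s,-k}(x^p)$ — not merely congruent up to some spurious unit or monomial factor. The cleanest route may be to avoid twists altogether and instead work with the full two-variable generating Laurent polynomial $\La(t,x)\cdot v$ in an extra variable $v$ tracking the $t$-degree, so that $C_{s,j}(x)$ is literally a constant term in $(t)$ of $v^{-j}\hat\Phi_s$; then \eqref{ex} for all $j=kp$ simultaneously becomes a single instance of Theorem \ref{thm cg} for the tuple $(\La(t,x)v,\dots,\La(t,x)v)$ with tail $\La(t,x^p)v$ suitably normalized, and extracting the coefficient of a fixed power of $v$ on both sides of \eqref{apply-cg} yields \eqref{ex} after invoking \eqref{rel0} to rewrite $C_{s\pm1,-k}(x^p)$. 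Once the identification is nailed down, the congruence modulo $p^{l(a)}=p^s$ is immediate from Theorem \ref{thm cg}, and the case $k=0$ recovers \eqref{CO} by $C_{s,0}=P_s$.
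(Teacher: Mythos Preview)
Your overall plan\,---\,realize each $C_{s',j}$ as $\on{CT}_t$ of a product built from twisted copies of $\hat\Phi_1(t,x)$ and then invoke Theorem~\ref{thm cg}\,---\,is exactly the paper's strategy. The gap is in the choice of twists. With a \emph{constant} shift $\La_k=t^{\,r}\hat\Phi_1$ and $a=(\La_k,\dots,\La_k)$ of length $s$, one has $\tilde a=t^{\,r(1+p+\dots+p^{s-1})}\hat\Phi_s$, so $\on{CT}_t(\tilde a)=C_{s,\,-r(p^s-1)/(p-1)}$; demanding this be $C_{s,kp}$ forces $r=-kp(p-1)/(p^s-1)$, while the analogous requirement from $a'$ (to hit $C_{s-1,-k}$ or $C_{s-1,k}$) forces $r=\pm k(p-1)/(p^{s-1}-1)$. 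These two constraints on $r$ are incompatible unless $k=0$, so the constant-twist tuple never produces the four target coefficients simultaneously. The proposed $v$-variable variant does not repair this: multiplying $\hat\Phi_1$ by a monomial in an auxiliary $z$-variable $v$ just introduces an overall power of $v$ in the constant term and does not shift the $t$-degree being extracted.

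What is missing is the observation that the total shift $-kp$ must be \emph{distributed across the tuple according to the $p$-adic digits of $k$}. Write $k=k_0+k_1p+\dots+k_{s-2}p^{s-2}$ with $-(p-1)/2\le k_i\le(p-1)/2$ (possible precisely because $k$ appears in \eqref{c=c}), and take the $(s+1)$-tuple
\[
\big(\hat\Phi_1,\ t^{-k_0}\hat\Phi_1,\ t^{-k_1}\hat\Phi_1,\ \dots,\ t^{-k_{s-2}}\hat\Phi_1,\ \hat\Phi_1\big).
\]
Then the four products built from $a=$ first $s$ entries, $b=\emptyset$, $c=$ last entry (and their primes) carry total $t$-shifts exactly $-kp$, $-k$, $-kp$, $-k$, so their constant terms are $C_{s+1,kp}(x)$, $C_{s-1,k}(x)$, $C_{s,kp}(x)$, $C_{s,k}(x)$ on the nose (after one use of \eqref{rel0} to pass from $-k$ to $k$). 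The bound $|k_i|\le(p-1)/2$ is precisely what makes each shifted Newton segment stay inside $[-(p-1),p-1]$, so that the admissibility check you outlined still succeeds for the non-constant tuple. With this digit-by-digit twist in place, Theorem~\ref{thm cg} yields \eqref{ex} directly.
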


\begin{proof}
Every index $k$ appearing in \eqref{c=c} can be written uniquely as 
\bean
\label{kf}
\phantom{aaa}
k= k_0 + k_1p+\dots + k_{s-2}p^{s-2}, 
\quad
-(p-1)/2\leq k_i \leq (p-1)/2.
\eean

Using \eqref{rel0} we  reformulate \eqref{ex} as
\bean
\label{exp0}
C_{s+1,kp}(x)C_{s-1,k}(x^p) \equiv C_{s,kp}(x)C_{s,k}(x^p) \mod{p^s}
\eean
and prove it below.  We have
\bea
C_{s+1,kp}(x) 
&=& 
\on{CT}_t\Big[\hat\Phi_1(t,x)
\Big(\prod_{i=0}^{s-2} \big( t^{-k_i}\hat\Phi_1(t,x)\big)^{p^{i+1}}\Big) \hat\Phi_1(t,x)^{p^s}\Big],
\\
C_{s-1,k}(x) 
&=& 
\on{CT}_t\Big[
\prod_{i=0}^{s-2} \big( t^{-k_i}\hat\Phi_1(t,x)\big)^{p^{i}}\Big],
\\
C_{s,kp}(x) 
&=& 
\on{CT}_t\Big[\hat\Phi_1(t,x)
\prod_{i=0}^{s-2} \big( t^{-k_i}\hat\Phi_1(t,x)\big)^{p^{i+1}}\Big],
\\
C_{s,k}(x) 
&=& 
\on{CT}_t\Big[
\Big(\prod_{i=0}^{s-2} \big( t^{-k_i}\hat\Phi_1(t,x)\big)^{p^{i}}\Big) \hat\Phi_1(t,x)^{p^{s-1}}\Big].
\eea
It is easy to see that the $(s+1)$-tuple of Laurent polynomials
\bea
\hat\Phi_1(t,x),\, t^{-k_0}\hat\Phi_1(t,x), \,\dots, \,t^{-k_{s-2}}\hat\Phi_1(t,x),\, \hat\Phi_1(t,x)
\eea
is admissible in the sense of Definition  \ref{defn2}. Now the application of Theorem \ref{thm cg} gives congruence 
\eqref{exp0} and hence congruence \eqref{ex}.
\end{proof}

\begin{rem}
Denote  $A(n,x) :={}_2F_1(-n,-n;1;x) = \sum_k  \binom{n}{k}^2 x^k$.
Let 
\bea
&
n = n_0+n_1p+ \dots + n_{s-1}p^{s-1},
\qquad
[n/p] = n_1 +\dots + n_{s-1}p^{s-2},
\eea
where $0\leq n_i<p$. Then for any $m\in\Z_{\geq 0}$ we have
\bean
\label{c An}
A(n +mp^s, x) A([n/p],x^p) \equiv A(n,x) A([n/p]+mp^{s-1},x^p) \pmod{p^s}.
\eean
\vsk.2>
\noindent
The proof follows from Theorem \ref{thm cg} and the identity
\bea
A(n,x) = \on{CT}_t\big[\big((t+1)(1+x/t)\big)^n\big].
\eea

\end{rem}

\subsection{Limits of $P_s(x)$}

For  $\al\in\Z_p$ there exists a unique solution $\om(\al)\in \Z_p$ of the equation 
$\om(\al)^p=\om(\al)$ that is congruent to $x$ modulo $p$. The element $\om(\al)$
is called the Teichm\"uller representative of $\al$. 
For $\al \in\F_p$, $r>0$, define the disc
\bea
 D_{\al,r} = \{ x\in \Z_p \mid  |x-\om(\al)|_p < r \}.
\eea
Denote
\bea
\bar P_s(x) := (-1)^{(p^s-1)/2} P_s(x) = {}_2F_1\Big(\Big[\!-\frac 12\Big]_{s},\Big[\!-\frac12\Big]_{s};1;x\Big),
\eea
see \eqref{p hy}.
Denote
\bean
\label{fD P}
 \frak D = \{x\in \Z_p \mid |\bar P_1(x)|_p=1\}.
\eean

\begin{thm}
\label{thm LP}

For $s\geq 1$ the rational function $\frac{\bar P_{s+1}(x)}{\bar P_s(x^p)}$  is regular on $\frak D$.
The sequence  $\big(\frac{\bar P_{s+1}(x)}{\bar P_s(x^p)}\big)_{s\geq 1}$ uniformly converges on $\frak D$.
The limiting analytic function $f(x)$ equals the ratio
$\frac{F(x)}{F(x^p)}$  on the disc $D_{0,1}$ where 
$F(x):={}_2F_1(1/2,1/2;1;x)$ is defined by the convergent power series  \eqref{111}.
We also have $|f(x)|_p=1$ for $x\in \frak D$.

\end{thm}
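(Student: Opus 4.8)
The plan is to run Dwork's classical argument, with the type~I congruence of Theorem~\ref{conj B}, in the form \eqref{-1/2s}, as the single substantial input; throughout put $g_s(x)=\bar P_{s+1}(x)/\bar P_s(x^p)$ and recall $\bar P_0\equiv1$. First I would record the basic properties of $\frak D$: since $\bar P_1\in\Z[x]$, the reduction $\bar P_1(x)\bmod p$ depends only on $x\bmod p$, so $\frak D$ is a finite, non-empty (it contains $0$) union of residue discs $a+p\Z_p$, open and closed in $\Z_p$; and since $\bar P_1(x)^p\equiv\bar P_1(x^p)\pmod p$ by the Frobenius identity in $\Z[x]$, the set $\frak D$ is stable under $x\mapsto x^p$. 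I then prove by induction on $s$ that $|\bar P_s(x)|_p=1$ for all $x\in\frak D$: the cases $s=0$ and $s=1$ are immediate, and for the step I evaluate \eqref{-1/2s} at $x\in\frak D$; since $x^p\in\frak D$, the right-hand side $\bar P_s(x)\bar P_s(x^p)$ is a unit by the inductive hypothesis, so $\bar P_{s+1}(x)\bar P_{s-1}(x^p)$ is a unit modulo $p^s$, hence modulo $p$, which forces $|\bar P_{s+1}(x)|_p=1$. In particular the denominator of $g_s$ is a unit on $\frak D$, so $g_s$ is regular there, and $|g_s(x)|_p=1$ on $\frak D$.

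For uniform convergence I would divide \eqref{-1/2s} by the unit $\bar P_s(x)\bar P_s(x^p)$, obtaining $g_s(x)/g_{s-1}(x)\equiv1\pmod{p^s}$ for $x\in\frak D$, whence $|g_s(x)-g_{s-1}(x)|_p=|g_{s-1}(x)|_p\,|g_s(x)/g_{s-1}(x)-1|_p\le p^{-s}$ uniformly on $\frak D$. Thus $(g_s)_{s\ge1}$ is uniformly Cauchy and converges uniformly on $\frak D$ to a function $f$. Each $g_s$ is analytic on $\frak D$: on a residue disc $a+p\Z_p\subset\frak D$ one has $\bar P_s(x^p)=\bar P_s(a^p)\bigl(1+p\cdot(\text{integral power series in }x-a)\bigr)$ with $\bar P_s(a^p)$ a unit (as $a^p\in\frak D$), so $1/\bar P_s(x^p)$, and hence $g_s$, is given there by a convergent power series in $x-a$; a uniform limit of such functions is again analytic, so $f$ is analytic on $\frak D$. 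Finally, $|f(x)-g_s(x)|_p\le p^{-s-1}<1=|g_s(x)|_p$ gives $|f(x)|_p=1$ on $\frak D$.

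It remains to identify $f$ on $D_{0,1}$. Here $\omega(0)=0$, so $D_{0,1}=\{x:|x|_p<1\}\subset\frak D$ because $\bar P_1(x)\equiv\bar P_1(0)=1\pmod p$ for $|x|_p<1$, and the series $F(x)=\sum_k\binom{-1/2}{k}^2x^k$ of \eqref{111} converges on $D_{0,1}$ since $\binom{-1/2}{k}\in\Z_p$ ($p$ being odd). Using $\bar P_s(x)=\sum_k\binom{(p^s-1)/2}{k}^2x^k$ together with $(p^s-1)/2\equiv-\tfrac12\pmod{p^s}$ and the fact that $y\mapsto\binom yk$ is a polynomial with coefficients in $\tfrac1{k!}\Z$, one finds $\bigl|\binom{(p^s-1)/2}{k}^2-\binom{-1/2}{k}^2\bigr|_p\le p^{-s}|k!|_p^{-1}$; combined with $|x^k|_p\le p^{-k}$ on $D_{0,1}$ and $p\ge3$ this gives $\bar P_s\to F$ uniformly on $D_{0,1}$. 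Hence $g_s(x)\to F(x)/F(x^p)$ pointwise on $D_{0,1}$ (the denominators $\bar P_s(x^p)$ and $F(x^p)$ being units, since $x^p\equiv0\pmod p$), and comparing with the uniform limit of the previous step yields $f(x)=F(x)/F(x^p)$ on $D_{0,1}$.

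Given Theorem~\ref{conj B}, the first two steps are routine divisibility bookkeeping. The one place where a genuine estimate is needed is the last step: showing that the perturbed truncations $\bar P_s$, whose coefficients are $\binom{(p^s-1)/2}{k}^2$ rather than $\binom{-1/2}{k}^2$, converge $p$-adically to $F$ on $D_{0,1}$, and that this pointwise limit coincides with the abstract uniform limit $f$ produced above. Everything else, as in Dwork's original treatment, reduces to the single congruence \eqref{-1/2s}.
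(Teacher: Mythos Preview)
Your argument is correct and follows the same Dwork-style route as the paper, with Theorem~\ref{conj B} as the single substantive input. The only cosmetic difference is that the paper obtains $|\bar P_s(x)|_p=1$ on $\frak D$ in one line via Lucas' theorem, $\bar P_s(x)\equiv\prod_{j=0}^{s-1}\bar P_1(x^{p^j})\pmod p$, rather than by your induction on the congruence, and is less explicit than you are about the uniform convergence $\bar P_s\to F$ on $D_{0,1}$.
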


\begin{proof}
We have $\Z_p = \bigcup_{\al\in\F_p}D_{\al,1}$  and also
$\frak D= \bigcup_{\al\in\F_p,\, |\bar P_1(\om(\al))|_p =1} D_{\al,1}$
since $\bar P_1(x)$ has coefficients in $\Z_p$.  In particular, $D_{0,1}\subset \frak D$.
We also have
$$
\{x\in \Z_p \mid |\bar P_1(x^p)|_p=1\} = \bigcup_{\al\in\F_p,\, |\bar  P_1(\om(\al))|_p =1} D_{\al,1} = \frak D
$$
for the same reason.

By Lucas' theorem  $\bar P_s(x) \equiv \bar P_1(x) \bar P_1(x^p)\dots \bar  P_1(x^{p^{s-1}})$ $\pmod{p}$.
 Hence
$|\bar  P_s(x)|_p = |\bar  P_s(x^p)|_p=1$ for $s\geq 1$, $x\in\frak D$. Hence the rational 
functions $\frac{\bar P_{s+1}(x)}{\bar P_s(x^p)}$ are regular on $\frak D$.

Congruence \eqref{CO} implies that
\bean
\label{COr}
\Big|\frac{\bar P_{s+1}(x)}{\bar P_{s}(x^p)} - \frac{\bar P_{s}(x)}{\bar P_{s-1}(x^p)}\Big|_p\leq p^{-s}
\quad\on{for}\ x\in\frak D.
\eean
This shows the uniform convergence of our sequence of rational functions on the domain $\frak D$.
For the limiting function $f(x)$ we have  $|f(x)|_p=1$ for $x\in \frak D$.

Clearly, for any fixed index $k$ the coefficient $\binom{(p^s-1)/2}{k}^2$ of $x^k$ in $\bar P_s(x)$
converges $p$-adically  to the coefficient $\binom{-1/2}{k}^2$ of $x^k$ in $F(x)$.
Hence  the sequence $(\bar P_s(x))_{s\geq 1}$ converges to $F(x)$ on $D_{0,1}$, so that
$f(x) =\frac{F(x)}{F(x^p)}$  on  $D_{0,1}$.
The theorem is proved.
\end{proof}

 Dwork gives in \cite{Dw} a different construction of analytic continuation of the
ratio $\frac{F(x)}{F(x^p)}$ 
 from $D_{0,1}$ to a larger domain. He considers the sequence of polynomials
\bea
F_s(x) = \sum_{k=0}^{p^s-1}\binom{-1/2}{k}^2 x^k,
\eea
which are truncations of the hypergeometric series $F(x)$, and  shows that the sequence of
rational functions $\big(\frac{F_{s+1}(x)}{F_s(x^p)}\big)_{s\geq 1}$ uniformly converges on the domain
$\frak D^{\on{Dw}} =\{x\in\Z_p\ |\ |g(x)|_p=1\}$, where the polynomial
\bean
\label{Igu} 
g(x) =\sum_{k=0}^{(p-1)/2} \binom{-1/2}{k}^2x^k
\eean
is attributed by Dwork to Igusa \cite{Ig}. Clearly his limiting function $f^{\on{Dw}}(x)$ equals the ratio
$\frac{F(x)}{F(x^p)}$
on  $D_{0,1}$.

\vsk.2>

It is easy to see that the two sequences of rational functions  $\big(\frac{\bar P_{s+1}(x)}{\bar P_s(x^p)}\big)_{s\geq 1}$
and $\big(\frac{F_{s+1}(x)}{F_s(x^p)}\big)_{s\geq 1}$
 have the same limiting functions on the same domain. Indeed, $\bar P_1(x)\equiv g(x)$ $\pmod{p}$  and 
hence $\frak D=\frak D^{\on{Dw}}$. Also $f(x) = f^{\on{Dw}}(x)$ 
on $D_{0,1}$ and hence on $\frak D$.

\vsk.2>
Dwork shows in \cite{Dw} interesting properties of the function $f(x)$. For example,
let $\al \in \F^\times_p-\{1\}$ be such that $\om(\al) \in \fD$. Dwork shows  that 
the zeta function of  the elliptic curve defined over
$\F_p$  by the equation
$y^2=x(x-1)(x-\al)$ has two zeros, which are
$1/((-1)^{(p-1)/2} f(\om(\al)))$ and $(-1)^{(p-1)/2} f(\om(\al))/p$.
Clearly  $|f(\om(\al))|_p=1$.
The number $(-1)^{(p-1)/2} f(\om(\al))$ is 
called the unit root of that elliptic curve.

\vsk.2>

According to our discussion this unit root can be calculated as the value at $x=\om(\al)$ of the limit as $s\to\infty$
of the ratio $\frac{\bar P_{s+1}(x)}{\bar P_s(x^p)}$ of approximation polynomials 
multiplied by $(-1)^{(p-1)/2}$.

\section{Function $_2F_1\Big(\frac23,\frac 13; 1;x\Big)$}
\label{sec 5}

\subsection{Two hypergeometric integrals}

The  integral
\bean
\label{211}
I^{(C)}(x) =\int_C t^{-1/3}(t-1)^{-1/3} (t-x)^{-2/3} dt
\eean
where  $C\subset \C-\{0,1,x\}$  is a contour on which the integrand  takes its initial value when $t$ encircles $C$
satisfies the hypergeometric differential equation
\bean
\label{HE/3}
x(1-x) I'' +(1-2x)I'-\frac29I=0. 
\eean
For a suitable choice of $C$ the integral $I^{(C)}(x)$ presents the hypergeometric function
\bean
\label{2}
_2F_1\Big(\frac23,\frac 13; 1,x\Big) = \sum_{k = 0}^\infty\binom{-1/3}{k}\binom{-2/3}{k}x^k\,.
\eean

The  integral
\bean
\label{121}
J^{(D)}(x) =\int_D t^{-2/3}(t-1)^{-2/3} (t-x)^{-1/3} dt
\eean
where  $D\subset \C-\{0,1,x\}$  is a contour on which the integrand  takes its initial value when $t$ encircles $D$
satisfies the same hypergeometric differential equation.
For a suitable choice of $D$ the integral $J^{(D)}(x)$ presents the same hypergeometric function
$_2F_1\Big(\frac23,\frac 13; 1,x\Big)$. 

\vsk.2>
The differential form
$t^{-1/3}(t-1)^{-1/3}(t-z)^{-2/3} dt$ is transformed to the differential form
 $-t^{-2/3}(t-1)^{-2/3}(t-z)^{-1/3} dt$ by the change of the variable
$t\mapsto (t-z)/(t-1)$.

\vsk.2>

In this section we discuss the $p^s$-approximations of the integrals
$I^{(C)}(x)$ and $J^{(D)}(x)$.

\subsection{The case $p=3\ell+1$}
The master polynomial for $I^{(C)}(x)$ is given by the formula
\bea
\Phi_s(t,x) = t^{(p^s-1)/3}(t-1)^{(p^s-1)/3}(t-x)^{2(p^s-1)/3}.
\eea
The $p^s$-approximation  polynomial $Q_s(x)$ is defined as the coefficient of 
$t^{p^s-1}$ in $\Phi_s(t,x)$, 
\bea
Q_s(x) = (-1)^{(p^s-1)/3}\sum_{k}  \binom{2(p^s-1)/3}{k} \binom{(p^s-1)/3}{k} x^k.
\eea
Define $Q_0(x)=1$.  We have
\bean
\label{Q hy}
Q_s(x) =  {}_2F_1\Big(\frac{2-2p^s}3, \frac{1-p^s}3; 1; x\Big),
\eean
since $(-1)^{(p^s-1)/3}=1$.

\vsk.2>
The polynomial $Q_s(x)$ is a solution of the hypergeometric equation \eqref{HE/3} modulo $p^s$. This 
follows from Theorem  \ref{thm m} or  from formula \eqref{Q hy}.

\vsk.2>
The master polynomial for $J^{(D)}(x)$ is given by the formula
\bea
\Psi_s(t,x) = t^{2(p^s-1)/3}(t-1)^{2(p^s-1)/3}(t-x)^{(p^s-1)/3}.
\eea
The $p^s$-approximation polynomial $R_s(x)$ is defined as the coefficient of 
$t^{p^s-1}$ in $\Psi_s(t,x)$, 
\bea
R_s(x) = \sum_{k}  \binom{2(p^s-1)/3}{k} \binom{(p^s-1)/3}{k} x^k.
\eea
Define $R_0(x)=1$.  We have 
\bean
\label{Q=R}
Q_s(x) = {}_2F_1\Big(2\frac{1-p^s}3, \frac{1-p^s}3; 1; x\Big).
\eean

Master polynomials satisfy baby congruences,
\bean
\label{bc 1}
&&
\Phi_{s+1}(t,x) \Phi_{s-1}(t^p,x^p) \equiv \Phi_{s}(t,x) \Phi_{s}(t^p,x^p) \pmod{p^s}, 
\\
&&
\notag
\Psi_{s+1}(t,x) \Psi_{s-1}(t^p,x^p) \equiv \Psi_{s}(t,x) \Psi_{s}(t^p,x^p) \pmod{p^s} ,
\eean
by formula \eqref{bin}.

\begin{thm}
\label{th-QR1}
For $p=3\ell+1$ the approximation polynomials $R_s(x)$ and $Q_s(x)$ satisfy the congruences
\bean
\label{QQ}
&&
Q_{s+1}(x) Q_{s-1}(x^p) \equiv Q_{s}(x) Q_{s}(x^p) \pmod{p^s},
\\
\label{RR}
&&
R_{s+1}(x) R_{s-1}(x^p) \equiv R_{s}(x) R_{s}(x^p) \pmod{p^s}.
\eean
\end{thm}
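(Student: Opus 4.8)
The plan is to deduce both \eqref{QQ} and \eqref{RR} directly from Theorem~\ref{thm cg}, in the same spirit in which Theorem~\ref{conj B} follows from Theorem~\ref{thm C_k}. The one feature that makes this work specifically for $p=3\ell+1$ is that the exponents have honest base-$p$ digit expansions: since $p=3\ell+1$, both $\ell=(p-1)/3$ and $2\ell=2(p-1)/3$ lie in $\{1,\dots,p-1\}$, and
\[
\frac{p^s-1}3=\ell\,(1+p+\dots+p^{s-1}),\qquad \frac{2(p^s-1)}3=2\ell\,(1+p+\dots+p^{s-1}).
\]
Concretely, I would set $\hat\Phi_s(t,x)=t^{-(p^s-1)}\Phi_s(t,x)$ and $\hat\Psi_s(t,x)=t^{-(p^s-1)}\Psi_s(t,x)$, so that by construction $\on{CT}_t(\hat\Phi_s)(x)=Q_s(x)$ and $\on{CT}_t(\hat\Psi_s)(x)=R_s(x)$. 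Substituting the digit expansions into the exponents gives
\[
\hat\Phi_s(t,x)=\prod_{i=0}^{s-1}\hat\Phi_1(t,x)^{p^i},\qquad \hat\Psi_s(t,x)=\prod_{i=0}^{s-1}\hat\Psi_1(t,x)^{p^i},
\]
where $\hat\Phi_1(t,x)=t^{-2\ell}(t-1)^\ell(t-x)^{2\ell}$ and $\hat\Psi_1(t,x)=t^{-\ell}(t-1)^{2\ell}(t-x)^\ell$. In the language of Section~\ref{sec 2} (with a single $t$-variable, $r=1$, and $x$ playing the role of the passive parameter $z$) this says precisely that $\hat\Phi_s=\widetilde{\la}$ for the constant $s$-tuple $\la=(\hat\Phi_1,\dots,\hat\Phi_1)$, and likewise $\hat\Psi_s=\widetilde{\la}$ for $\la=(\hat\Psi_1,\dots,\hat\Psi_1)$.

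The next step is the admissibility check, which is the only place requiring a line of verification but which reduces to a trivial estimate. Here $N(\hat\Phi_1)=[-2\ell,\ell]$ and $N(\hat\Psi_1)=[-\ell,2\ell]$ as intervals in $\R$, so for every $m\ge0$
\[
\sum_{i=0}^m p^i[-2\ell,\ell]=\Big[-\tfrac{2(p^{m+1}-1)}3,\tfrac{p^{m+1}-1}3\Big]\subset(-p^{m+1},p^{m+1}),
\]
the inclusion being the inequality $2(p^{m+1}-1)<3p^{m+1}$, and similarly for $\hat\Psi_1$; hence the intersection with $p^{m+1}\Z$ is $\{0\}$, and every tuple of copies of $\hat\Phi_1$ (resp.\ of $\hat\Psi_1$) is admissible in the sense of Definition~\ref{defn2}.

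Finally I would apply Theorem~\ref{thm cg} with $a=(\hat\Phi_1,\dots,\hat\Phi_1)$ of length $s$, $b=(\hat\Phi_1)$, and $c$ empty: then $\widetilde{a*b}=\hat\Phi_{s+1}$, $\widetilde{a'*c}=\hat\Phi_{s-1}$ and $\widetilde{a'*b}=\widetilde{a*c}=\hat\Phi_s$, so taking constant terms in $t$ gives $Q_{s+1}(x)Q_{s-1}(x^p)\equiv Q_s(x^p)Q_s(x)\pmod{p^s}$, which is \eqref{QQ}; the same argument with $\hat\Psi_1$ in place of $\hat\Phi_1$ yields \eqref{RR}. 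I do not expect a genuine obstacle: all the content sits in observing that $p\equiv1\pmod3$ turns $\ell$ and $2\ell$ into honest base-$p$ digits, after which Theorem~\ref{thm cg} applies verbatim, and the only thing needing an explicit check is the admissibility estimate above. (The complementary case $p=3\ell+2$, in which $(p^s-1)/3$ is an integer only for even $s$, behaves differently and would be treated separately.)
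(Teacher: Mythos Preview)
Your proof is correct and follows essentially the same route as the paper's: express $Q_s(x)$ (resp.\ $R_s(x)$) as $\on{CT}_t$ of a product of $p$-power powers of a single Laurent polynomial $\hat\Phi_1$ (resp.\ $\hat\Psi_1$), verify admissibility of the constant tuple, and apply Theorem~\ref{thm cg} with $a$ of length $s$, $b$ of length $1$, and $c$ empty. The paper's $\hat\Phi_1(t,x)=(t-1)^\ell(1-x/t)^{2\ell}$ is exactly your $t^{-2\ell}(t-1)^\ell(t-x)^{2\ell}$; the only extra remark the paper adds is that $R_s=Q_s$ in this case, so \eqref{RR} is immediate from \eqref{QQ} without a separate (though identical) application of Theorem~\ref{thm cg}.
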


\begin{proof} 
Denote $\hat\Phi_1(t,x) = (t-1)^{\ell}(1-x/t)^{2\ell}$. Then
$Q_{s+1}(x) = \on{CT}_t\big[\hat\Phi_1(t,x)^{1+p+\dots+p^s}\big]$.
It is easy to see that the $(s+1)$-tuple of Laurent polynomials
$(\hat\Phi_1(t,x),\hat\Phi_1(t,x), \,\dots, \hat\Phi_1(t,x))$
is admissible in the sense of Definition  \ref{defn2}. 
Now the application of Theorem \ref{thm cg} gives congruence 
\eqref{QQ}.
Congruence \eqref{RR} is proved in the same way applied to 
the formula  $R_{s+1}(x)$ $=$ $\on{CT}_t\big[\hat\Psi_1(t,x)^{1+p+\dots+p^s}\big]$, where
$\hat\Psi_1(t,x) = (t-1)^{2\ell}(1-x/t)^{\ell}$.
Congruence \eqref{RR} also follows from \eqref{QQ} since $R_s(x)=Q_s(x)$.
\end{proof}

Formulas \eqref{Q hy} and \eqref{Q=R} imply that for $p=3\ell+1$,\, $s\geq 1$ we have  
\bean
\label{3l+1}
&{}_2F_1\Big(\frac{2-2p^{s+1}}3, \frac{1-p^{s+1}}3; 1; x\Big)
{}_2F_1\Big(\frac{2-2p^{s-1}}3, \frac{1-p^{s-1}}3; 1; x^p\Big) 
\equiv \phantom{aaaaaaaaaaaaaaaaaa}
\\
\notag
&
\equiv
{}_2F_1\Big(\frac{2-2p^s}3, \frac{1-p^s}3; 1; x\Big)
{}_2F_1\Big(\frac{2-2p^s}3, \frac{1-p^s}3; 1; x^p\Big) \pmod{p^s}.
\eean
Using the expansions
\begin{alignat*}{2}
-1/3 &= \ell + \ell p+\ell p^2+\cdots ,
&\qquad
-2/3 &= 2\ell + 2\ell p+2\ell p^2+\cdots ,
\\
(p^s-1)/3 &= \ell + \ell p+\ell p^2+\dots  + \ell p^{s-1},
&\qquad
(p^s-2)/3 &= 2\ell + 2\ell p+2\ell p^2+\dots  + 2\ell p^{s-1},
\end{alignat*}
we conclude that for $p=3\ell+1$ and $s\geq 1$ we have

\bean
\label{132+1} 
&
{}_2F_1\big([-\frac23]_{s+1}, [-\frac13]_{s+1};1;x\big)\,
{}_2F_1\big([-\frac23]_{s-1}, [-\frac13]_{s-1};1;x^p\big)
\equiv \phantom{aaaaaaaaaaaa}
\\
\notag
&
\equiv
{}_2F_1\big([-\frac 23]_{s}, [-\frac13]_{s};1;x\big)\,
{}_2F_1\big([-\frac23]_{s}, [-\frac13]_{s};1;x^p\big)
\pmod{p^s}.
\eean

\vsk.2>

\subsection{The case $p=3\ell+2>2$}
The master polynomial for $I^{(C)}(x)$ is given by the formulas
\bea
\Phi_s(t,x) &=& t^{(2p^s-1)/3}(t-1)^{(2p^s-1)/3}(t-x)^{(p^s-2)/3}, \qquad \!\!\on{odd}\,s,
\\
\Phi_s(t,x) &=& t^{(p^s-1)/3}(t-1)^{(p^s-1)/3}(t-x)^{2(p^s-1)/3}, \qquad \on{even}\,s.
\eea
The $p^s$-approximation polynomial $Q_s(x)$ is defined as the coefficient of 
$t^{p^s-1}$ in $\Phi_s(t,x)$, 
\bea
Q_s(x) &=& (-1)^{(2p^s-1)/3}\sum_{k}  \binom{(2p^s-1)/3}{k} \binom{(p^s-2)/3}{k} x^k,
 \qquad \!\!\on{odd}\,s,
\\
Q_s(x) &=& (-1)^{(p^s-1)/3}\sum_{k}  \binom{2(p^s-1)/3}{k} \binom{(p^s-1)/3}{k} x^k,
 \qquad \on{even}\,s.
\eea
Define $Q_0(x)=1$.  We have
\bean
\label{Q hy 2}
Q_s(x) &=& -\, {}_2F_1\Big(\frac{2-p^s}3, \frac{1-2p^s}3; 1; x\Big), \qquad\! \!\!\on{odd}\,s,
\\
\notag
Q_s(x) &=&  {}_2F_1\Big(\frac{2-2p^s}3, \frac{1-p^s}3; 1; x\Big),\ 
\qquad \on{even}\,s.
\eean
Here we use the fact that for $p=3\ell+2>2$ we have
$(-1)^{(2p^s-1)/3}=-1 $ for odd $s$ and $(-1)^{(p^s-1)/3}=1$ for even $s$.

\vsk.2>

The polynomial $Q_s(x)$ is a solution of the hypergeometric equation \eqref{HE/3} modulo $p^s$. This 
follows from Theorem  \ref{thm m} or  from formula \eqref{Q hy 2}.

\vsk.2>
The master polynomial for $J^{(D)}(x)$ is given by the formulas
\bea
\Psi_s(t,x) &=& t^{(p^s-2)/3}(t-1)^{(p^s-2)/3}(t-x)^{(2p^s-1)/3}, \qquad\, \on{odd}\,s,
\\
\Psi_s(t,x) &=& t^{2(p^s-1)/3}(t-1)^{2(p^s-1)/3}(t-x)^{(p^s-1)/3}, \qquad \!\on{even}\,s.
\eea
The $p^s$-approximation polynomial $R_s(x)$ is defined as the coefficient of 
$t^{p^s-1}$ in $\Phi_s(t,x)$, 
\bean
\label{roe}
R_s(x) &=& (-1)^{(p^s-2)/3}\sum_{k}  \binom{(2p^s-1)/3}{k} \binom{(p^s-2)/3}{k} x^k,
 \qquad \on{odd}\,s,
\\
\notag
R_s(x) &=& (-1)^{2(p^s-1)/3}\sum_{k}  \binom{2(p^s-1)/3}{k} \binom{(p^s-1)/3}{k} x^k,
 \qquad\!\! \on{even}\,s.
\eean
Define $R_0(x)=1$.  We have
\bean
\label{R hy 2}
R_s(x) &=& -\, {}_2F_1\Big(\frac{2-p^s}3, \frac{1-2p^s}3; 1; x\Big), \qquad \on{odd}\,s,
\\
\notag
R_s(x) &=&  {}_2F_1\Big(\frac{2-2p^s}3, \frac{1-p^s}3; 1; x\Big),\quad \
\qquad \!\!\on{even}\,s.
\eean
Here we use the fact that for $p=3\ell+2>2$ we have
$(-1)^{(p^s-2)/3}=-1$ for odd $s$ and $(-1)^{(2p^s-1)/3}=1 $ for even $s$.

\vsk.2>
The polynomial $R_s(x)$ is a solution of the hypergeometric equation \eqref{HE/3} modulo $p^s$. This 
follows from Theorem  \ref{thm m} or  from formula \eqref{R hy 2}.

\begin{lem}
Master polynomials satisfy baby congruences,
\bean
&&
\label{hs}
\Phi_{s+1}(t,x) \Psi_{s-1}(t^p,x^p) \equiv \Phi_{s}(t,x) \Psi_{s}(t^p,x^p) \pmod{p^s}, 
\\
&&
\label{sh}
\Psi_{s+1}(t,x) \Phi_{s-1}(t^p,x^p) \equiv \Psi_{s}(t,x) \Phi_{s}(t^p,x^p) \pmod{p^s} .
\eean
\end{lem}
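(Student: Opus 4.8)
The plan is to establish both congruences \eqref{hs} and \eqref{sh} by exactly the mechanism that yields \eqref{bin} and \eqref{bc 1}. The only new feature for $p=3\ell+2$ is that, since $p\not\equiv1\pmod3$, the ``shape'' of the master polynomials alternates with the parity of $s$; consequently $\Phi_s$ by itself does not satisfy a baby congruence, and one is forced to interleave $\Phi$ with $\Psi$.

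First I record the elementary congruence doing all the work: for every polynomial $f\in\Z_p[t,x]$ and every $s\ge1$,
\[
f(t,x)^{p^s}\,\equiv\,f(t^p,x^p)^{p^{s-1}}\pmod{p^s}.
\]
For $s=1$ this is the Frobenius identity $f(t,x)^p\equiv f(t^p,x^p)\pmod p$ (the mixed terms in the multinomial expansion of $f(t,x)^p$ carry a factor of $p$, and $c^p\equiv c\pmod p$ for $c\in\Z_p$); the general case follows by raising to the $p$-th power $s-1$ times, using that $A\equiv B\pmod{p^k}$ forces $A^p\equiv B^p\pmod{p^{k+1}}$.

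Next, comparing the exponents of successive master polynomials, I claim
\[
\Phi_{s+1}(t,x)=\Phi_s(t,x)\,G_s(t,x)^{p^s},\qquad \Psi_s(t,x)=\Psi_{s-1}(t,x)\,G_s(t,x)^{p^{s-1}},
\]
where $G_s=\Psi_1$ for odd $s$ and $G_s=\Phi_1$ for even $s$. This is a direct check: the base-$p$ digits of $-\tfrac13$ alternate $2\ell+1,\ell,2\ell+1,\ell,\dots$ and those of $-\tfrac23$ alternate $\ell,2\ell+1,\ell,2\ell+1,\dots$ (because $3\ell+2=p$), so the exponent increments of $\Phi$ at level $p^s$ coincide with those of $\Psi$ at level $p^{s-1}$, and both are the exponent tuple of $\Phi_1$ or of $\Psi_1$ according to the parity of $s$. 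Substituting $t\mapsto t^p$, $x\mapsto x^p$ in the second identity and subtracting,
\[
\Phi_{s+1}(t,x)\,\Psi_{s-1}(t^p,x^p)-\Phi_s(t,x)\,\Psi_s(t^p,x^p)=\Phi_s(t,x)\,\Psi_{s-1}(t^p,x^p)\bigl(G_s(t,x)^{p^s}-G_s(t^p,x^p)^{p^{s-1}}\bigr),
\]
and the bracketed factor is divisible by $p^s$ by the congruence of the previous paragraph; this proves \eqref{hs}. Congruence \eqref{sh} is obtained verbatim with the roles of $\Phi$ and $\Psi$ interchanged, where now $G_s=\Phi_1$ for odd $s$ and $G_s=\Psi_1$ for even $s$, and the analogous difference factors as $\Psi_s(t,x)\,\Phi_{s-1}(t^p,x^p)\bigl(G_s(t,x)^{p^s}-G_s(t^p,x^p)^{p^{s-1}}\bigr)\equiv0\pmod{p^s}$.

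The only delicate point is the middle step: verifying that the exponent increments of $\Phi$ and of $\Psi$ interlock so that the \emph{same} polynomial $G_s$ appears in both factorizations (equivalently, that $\Phi_s$ has exponent tuple $([-\tfrac13]_s,[-\tfrac13]_s,[-\tfrac23]_s)$ and $\Psi_s$ the tuple $([-\tfrac23]_s,[-\tfrac23]_s,[-\tfrac13]_s)$ in terms of the truncated $p$-adic expansions). This is pure bookkeeping with the two alternating digit patterns; once it is in hand, the argument is identical to the proofs of the baby congruences \eqref{bin} and \eqref{bc 1}.
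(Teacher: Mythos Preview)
Your proof is correct and follows essentially the same approach as the paper: both arguments write out the two sides and reduce the congruence to the Frobenius-type relation $f(t,x)^{p^s}\equiv f(t^p,x^p)^{p^{s-1}}\pmod{p^s}$ (the paper phrases this as ``follows from formula \eqref{bin}''). Your presentation is slightly more structured in that you explicitly isolate the common increment $G_s$ and factor the difference as $\Phi_s(t,x)\Psi_{s-1}(t^p,x^p)\bigl(G_s(t,x)^{p^s}-G_s(t^p,x^p)^{p^{s-1}}\bigr)$, whereas the paper simply displays both products and leaves the exponent-matching to the reader; but the substance is identical.
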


\begin{proof} We prove \eqref{hs} for an odd  $s$. The case of an even $s$ and congruence \eqref{sh}
are proved similarly. 
The left-hand side of \eqref{hs} for an odd $s=2k+1$ equals
\bea
t^{(p^{s+1}-1)/3}(t-1)^{(p^{s+1}-1)/3}(t-x)^{2(p^{s+1}-1)/3}
t^{2p(p^{s-1}-1)/3}(t^p-1)^{2(p^{s-1}-1)/3}(t^p-x^p)^{(p^{s-1}-1)/3},
\eea
while the right-hand side equals 
\bea
t^{(2p^s-1)/3}(t-1)^{(2p^s-1)/3}(t-x)^{(p^s-2)/3}
t^{p(p^s-2)/3}(t^p-1)^{(p^s-2)/3}(t^p-x^p)^{(2p^s-1)/3}.
\eea
Now the congruence \eqref{hs} for an odd $s$ follows from  formula \eqref{bin}.
\end{proof}

\begin{thm}
\label{th-QR2}
For $p=3\ell+2>2$ the approximation polynomials $R_s(x)$ and $Q_s(x)$ satisfy the congruences
\bean
\label{QR}
&&
Q_{s+1}(x) R_{s-1}(x^p) \equiv Q_{s}(x) R_{s}(x^p) \pmod{p^s},
\\
\label{RQ}
&&
R_{s+1}(x) Q_{s-1}(x^p) \equiv R_{s}(x) Q_{s}(x^p) \pmod{p^s}.
\eean
\end{thm}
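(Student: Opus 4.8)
The plan is to follow the proofs of Theorems~\ref{thm C_k} and~\ref{th-QR1}: I would express each of $Q_{s+1}(x)$, $R_{s-1}(x^p)$, $Q_s(x)$, $R_s(x^p)$ as a constant term in $t$ of a product $\widetilde\lambda$ attached to an admissible tuple $\lambda$ of Laurent polynomials in $t$ and an auxiliary variable, and then apply the Dwork congruence for tuples, Theorem~\ref{thm cg}, with that auxiliary variable set equal to $x$. The new structural point, compared with the case $p=3\ell+1$, is that since $p\equiv2\pmod3$ the base-$p$ digit strings of $-1/3$ and of $-2/3$ \emph{alternate}: a short computation gives $-1/3=(2\ell+1)+\ell\,p+(2\ell+1)p^2+\dots$ and $-2/3=\ell+(2\ell+1)p+\ell\,p^2+\dots$. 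Write $\Phi_m=t^{\al_m}(t-1)^{\al_m}(t-x)^{\ga_m}$ and $\Psi_m=t^{\ga_m}(t-1)^{\ga_m}(t-x)^{\al_m}$, where $\al_m,\ga_m\in[1,p^m)$ are the representatives of $-1/3$ and $-2/3$ modulo $p^m$ (so $\al_m=(2p^m-1)/3$, $\ga_m=(p^m-2)/3$ for odd $m$, and $\al_m=(p^m-1)/3$, $\ga_m=2(p^m-1)/3$ for even $m$); then the expansions above say that $\al_m$ has base-$p$ digits $2\ell+1,\ell,2\ell+1,\dots$ and $\ga_m$ has base-$p$ digits $\ell,2\ell+1,\ell,\dots$.

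Accordingly, I would set
\[
\hat A(t,x)=t^{-\ell}(t-1)^{2\ell+1}(t-x)^{\ell},\qquad
\hat B(t,x)=t^{-2\ell-1}(t-1)^{\ell}(t-x)^{2\ell+1},
\]
which are $t^{-(p-1)}$ times the two ``digit factors'' $t^{2\ell+1}(t-1)^{2\ell+1}(t-x)^{\ell}$ and $t^{\ell}(t-1)^{\ell}(t-x)^{2\ell+1}$. Since $\al_m,\ga_m\in[1,p^m)$ and $\sum_{i=0}^{m-1}(p-1)p^i=p^m-1$, the digit computation above gives $t^{-(p^m-1)}\Phi_m=\prod_{i=0}^{m-1}\hat\Lambda_i^{\,p^i}$ with $\hat\Lambda_i=\hat A$ for even $i$ and $\hat\Lambda_i=\hat B$ for odd $i$, while $t^{-(p^m-1)}\Psi_m$ has the same form with $\hat A$ and $\hat B$ interchanged. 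As $Q_m(x)$ (resp.\ $R_m(x)$) is by definition the coefficient of $t^{p^m-1}$ in $\Phi_m$ (resp.\ $\Psi_m$), this yields the clean formulas $Q_m(x)=\on{CT}_t\big(\prod_{i=0}^{m-1}\hat\Lambda_i^{\,p^i}\big)$ and the analogous one for $R_m$; the sign prefactors in \eqref{Q hy 2} and \eqref{R hy 2} are already absorbed into these identities, so no signs need to be tracked.

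To prove \eqref{QR} I would take, in Theorem~\ref{thm cg}, the tuple $a$ to be the alternating tuple $(\hat A,\hat B,\hat A,\dots)$ of length $s$ starting with $\hat A$, the tuple $b$ to be the length-$1$ tuple continuing the alternation (so $b=(\hat A)$ for even $s$ and $b=(\hat B)$ for odd $s$), and $c$ empty; then $a'=(\hat B,\hat A,\dots)$ has length $s-1$. A check of parities gives $\widetilde{a*b}=t^{-(p^{s+1}-1)}\Phi_{s+1}$, $\widetilde{a*c}=t^{-(p^s-1)}\Phi_{s}$, $\widetilde{a'*b}=t^{-(p^s-1)}\Psi_{s}$, $\widetilde{a'*c}=t^{-(p^{s-1}-1)}\Psi_{s-1}$, so that, upon taking constant terms in $t$ and setting the auxiliary variable equal to $x$, Theorem~\ref{thm cg} modulo $p^{l(a)}=p^s$ becomes exactly \eqref{QR}. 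Congruence \eqref{RQ} follows the same way after starting $a$ with $\hat B$, i.e.\ interchanging $\hat A\leftrightarrow\hat B$ (equivalently $\Phi\leftrightarrow\Psi$).

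\textbf{The main obstacle} I expect is verifying the admissibility (Definition~\ref{defn2}) of the four tuples $a*b,\,a*c,\,a'*b,\,a'*c$, all of which are windows of the doubly infinite alternating sequence $\dots,\hat A,\hat B,\hat A,\dots$. For a window $(\hat\Lambda_k,\dots,\hat\Lambda_l)$ one needs $N(\hat\Lambda_k)+pN(\hat\Lambda_{k+1})+\dots+p^{\,l-k}N(\hat\Lambda_l)$ to meet $p^{\,l-k+1}\Z$ only in $\{0\}$; since the extreme $t$-degrees of these Laurent polynomials are attained with nonvanishing coefficients, this Minkowski sum is exactly the interval of $t$-degrees of $\hat\Lambda_k\hat\Lambda_{k+1}^{\,p}\cdots\hat\Lambda_l^{\,p^{\,l-k}}$, which by the decomposition above equals $t^{-(p^n-1)}\Phi_n$ or $t^{-(p^n-1)}\Psi_n$ with $n=l-k+1$. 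The $t$-degrees of $t^{-(p^n-1)}\Phi_n$ fill $[\al_n-(p^n-1),\,2\al_n+\ga_n-(p^n-1)]$ and those of $t^{-(p^n-1)}\Psi_n$ fill $[\ga_n-(p^n-1),\,2\ga_n+\al_n-(p^n-1)]$; substituting the explicit values of $\al_n,\ga_n$ (treating $n$ even and odd separately), both intervals lie in $[-\tfrac{2p^n-1}3,\tfrac{2p^n-1}3]\subset(-p^n,p^n)$, hence contain no nonzero multiple of $p^n$. Apart from this, the argument is pure bookkeeping, the fussiest spot being the parity case split ($s$ even versus odd) that enters both in the choice of $b$ and in matching $\widetilde{a*b}$, etc., to the $\Phi$'s and $\Psi$'s.
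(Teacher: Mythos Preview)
Your proposal is correct and follows essentially the same approach as the paper's proof. Your Laurent polynomials $\hat A(t,x)=t^{-\ell}(t-1)^{2\ell+1}(t-x)^{\ell}$ and $\hat B(t,x)=t^{-2\ell-1}(t-1)^{\ell}(t-x)^{2\ell+1}$ are identical to the paper's $f(t,x)=(t-1)^{2\ell+1}(1-x/t)^{\ell}$ and $g(t,x)=(t-1)^{\ell}(1-x/t)^{2\ell+1}$ (just expand $(1-x/t)^k=t^{-k}(t-x)^k$), and both proofs apply Theorem~\ref{thm cg} to the alternating tuple $(\hat A,\hat B,\hat A,\dots)$ with the choices $a$ of length $s$, $b$ of length $1$, and $c$ empty; your admissibility verification is more explicit than the paper's ``it is easy to see'' but checks the same thing.
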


\begin{proof}
We prove \eqref{QR} for an odd  $s$. The case of an even $s$ and congruence \eqref{RQ}
are proved similarly. 
Denote
\bea
f(t,x)
&=&
(t-1)^{(2p-1)/3}(1-x/t)^{(p-2)/3}=(t-1)^{2\ell+1}(1-x/t)^{\ell},
\\
g(t,x)
&=&
(t-1)^{(p-2)/3}(1-x/t)^{(2p-1)/3}=
(t-1)^{\ell}(1-x/t)^{2\ell+1}.
\eea
It is easy to see that for an odd $s$ we have 
\bea
Q_{s+1}(x)  
&=& 
\on{CT}_t\Big[(t-1)^{(p^{s+1}-1)/3}(1-x/t)^{2(p^{s+1}-1)/3}\Big]
\\
&=&
\on{CT}_t\Big[f(t,x) g(t,x)^{p} \dots f(t,x)^{p^{s-1}} g(t,x)^{p^s} \Big],
\\
R_{s-1}(x)  
&=& 
\on{CT}_t\Big[(t-1)^{2(p^{s-1}-1)/3}(1-x/t)^{(p^{s-1}-1)/3}\Big]
\\
&=&
\on{CT}_t\Big[g(t,x) f(t,x)^{p} \dots g(t,x)^{p^{s-3}}f(t,x)^{p^{s-2}} \Big],
\eea
\bea
Q_{s}(x)  
&=& 
\on{CT}_t\Big[(t-1)^{(2p^{s}-1)/3}(1-x/t)^{(p^{s}-2)/3}\Big]
\\
&=&
\on{CT}_t\Big[f(t,x) g(t,x)^{p} \dots g(t,x)^{p^{s-2}} f(t,x)^{p^{s-1}} \Big],
\\
R_{s}(x)  
&=& 
\on{CT}_t\Big[(t-1)^{(p^{s}-2)/3}(1-x/t)^{(2p^{s}-1)/3}\Big]
\\
&=&
\on{CT}_t\Big[g(t,x) f(t,x)^{p} \dots f(t,x)^{p^{s-2}}g(t,x)^{p^{s-1}} \Big],
\eea
Observe that the $(s+1)$-tuple of Laurent polynomials
$(f(t,x),g(t,x),\dots, f(t,x),g(t,x))$
is admissible in the sense of Definition  \ref{defn2}. Now the application of Theorem \ref{thm cg} gives congruence 
\eqref{QR} for an odd $s$.
\end{proof}

\begin{rem}
In general we may take any admissible tuple of Laurent polynomials and obtain
the coresponding Dwork congruences.
For example the tuples $(f,g,f,f,f,g,g,f,\dots)$ and 
$(f,f,\dots)$ are admissible.
\end{rem}

Using formulas \eqref{Q hy 2} and \eqref{R hy 2} we may reformulate congruences \eqref{QR} and \eqref{RQ} as
\bean
\label{odd}
&{}_2F_1\Big(\frac{2-p^{s+1}}3, \frac{1-2p^{s+1}}3; 1; x\Big)
{}_2F_1\Big(\frac{2-p^{s-1}}3, \frac{1-2p^{s-1}}3; 1; x^p\Big) 
\equiv \phantom{aaaaaaaaaaaaaaaaaa}
\\
\notag
&
\equiv
{}_2F_1\Big(\frac{2-2p^s}3, \frac{1-p^s}3; 1; x\Big)
{}_2F_1\Big(\frac{2-2p^s}3, \frac{1-p^s}3; 1; x^p\Big) \pmod{p^s}, \ \  \on{odd} \,s,
\eean
\bean
\label{even}
&{}_2F_1\Big(\frac{2-2p^{s+1}}3, \frac{1-p^{s+1}}3; 1; x\Big)
{}_2F_1\Big(\frac{2-2p^{s-1}}3, \frac{1-p^{s-1}}3; 1; x^p\Big) 
\equiv \phantom{aaaaaaaaaaaaaaaaaa}
\\
\notag
&
\equiv
{}_2F_1\Big(\frac{2-p^s}3, \frac{1-2p^s}3; 1; x\Big)
{}_2F_1\Big(\frac{2-p^s}3, \frac{1-2p^s}3; 1; x^p\Big) \pmod{p^s}, \ \  \on{even} \,s.
\eean
Recall that in these congruences we have $p=3\ell+2$.

\vsk.2>

Consider the $p$-adic presentations
\bean
\label{p-1/3}
-1/3
&=&
2\ell +1 + \ell p+(2\ell+1)p^2+\ell p^3+\cdots ,
\\
\label{p-2/3}
-2/3
&=&
\ell  + (2\ell+1)p+\ell p^2+ (2\ell+1)p^3+\cdots ,
\eean
Recall that $[-1/3]_s$ (resp., $[-2/3]_s$)  is the sum of the first $s$ summands in \eqref{p-1/3} (resp., \eqref{p-2/3}). Then
congruences \eqref{odd} and \eqref{even} imply that for $p=3\ell+2$, \,$s\geq 1$ we have

\bean
\label{13 23} 
&
{}_2F_1\big([-\frac23]_{s+1}, [-\frac13]_{s+1};1;x\big)\,
{}_2F_1\big([-\frac23]_{s-1}, [-\frac13]_{s-1};1;x^p\big)
\equiv \phantom{aaaaaaaaaaaa}
\\
\notag
&
\equiv
{}_2F_1\big([-\frac 23]_{s}, [-\frac13]_{s};1;x\big)\,
{}_2F_1\big([-\frac23]_{s}, [-\frac13]_{s};1;x^p\big)
\pmod{p^s}.
\eean

\subsection{Limits of $\bar Q_s(x)$}

Define
\bean
\label{bar Q}
\bar Q_s(x) = {}_2F_1\Big(\Big[\!-\frac 23\Big]_{s}, \Big[\!-\frac13\Big]_{s};1;x\Big).
\eean
Then for any prime $p>3$ we have
\bean
\label{bar QQ} 
\bar Q_{s+1}(x)\bar Q_{s-1}(x^p)
\equiv
\bar Q_{s}(x)\bar Q_{s}(x^p) \pmod{p^s}
\eean
by \eqref{132+1} and \eqref{13 23}.

\begin{thm}
\label{thm LQ}
For any prime $p>3$ and integer $s\geq 1$ the rational function $\frac{\bar Q_{s+1}(x)}{\bar Q_s(x^p)}$  is regular on the domain
\bean
\label{fD Q}
\frak D  = \{ x\in\Z_p\ |\  |\bar Q_1(x)|_p=1\}.
\eean
The sequence  $\big(\frac{\bar Q_{s+1}(x)}{\bar Q_s(x^p)}\big)_{s\geq 1}$ uniformly converges on $\frak D$.
The limiting analytic function $f(x)$ equals the ratio
$\frac{F(x)}{F(x^p)}$  on the disc $D_{0,1}$ where 
$F(x):={}_2F_1(2/3,1/3;1;x)$ is defined by the corresponding convergent power series.
\end{thm}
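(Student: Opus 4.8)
The plan is to follow the proof of Theorem~\ref{thm LP} essentially verbatim, replacing $\bar P_s$ by $\bar Q_s$, the input congruence \eqref{CO} by \eqref{bar QQ}, and $F(x)={}_2F_1(1/2,1/2;1;x)$ by $F(x)={}_2F_1(2/3,1/3;1;x)$. First I would record that $\bar Q_1(x)\in\Z_p[x]$ (its coefficients are products of two binomial coefficients, by the formulas of Section~\ref{sec 5}) and that $\bar Q_1(0)$ is a $p$-adic unit. Hence $|\bar Q_1(x)|_p$ and $|\bar Q_1(x^p)|_p$ depend only on the residue of $x$ modulo $p$, i.e.\ only on the Teichm\"uller disc $D_{\al,1}$ containing $x$, so that
\bea
\frak D=\bigcup_{\al\in\F_p,\ |\bar Q_1(\om(\al))|_p=1}D_{\al,1}=\{x\in\Z_p\mid|\bar Q_1(x^p)|_p=1\},
\eea
the last equality using $x^p\equiv x\pmod p$; in particular $D_{0,1}\subset\frak D$.

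The second step is the Lucas-type congruence $\bar Q_s(x)\equiv\bar Q_1(x)\,\bar Q_1(x^p)\cdots\bar Q_1(x^{p^{s-1}})\pmod p$ for $s\ge1$. The coefficient of $x^k$ in $\bar Q_s(x)$ is a product of two binomial coefficients whose upper arguments are the length-$s$ truncations of the $p$-adic expansions of $-\tfrac23$ and $-\tfrac13$, and these expansions have stable digit patterns: all digits equal $2\ell$, resp.\ $\ell$, when $p=3\ell+1$, while they are two-periodic when $p=3\ell+2$ by \eqref{p-1/3}--\eqref{p-2/3}. Applying Lucas' theorem digit by digit factors $\bar Q_s$ modulo $p$ as claimed, where for $p=3\ell+2$ one also uses $\binom{\ell}{j}\binom{2\ell+1}{j}=\binom{2\ell+1}{j}\binom{\ell}{j}$ so that the even- and odd-position factors both coincide with $\bar Q_1$. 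Consequently $|\bar Q_s(x)|_p=|\bar Q_s(x^p)|_p=1$ for all $s\ge1$ and all $x\in\frak D$, so each rational function $\bar Q_{s+1}(x)/\bar Q_s(x^p)$ is regular and of absolute value $1$ on $\frak D$.

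For the uniform convergence I would divide \eqref{bar QQ} by $\bar Q_s(x^p)\bar Q_{s-1}(x^p)$, units on $\frak D$ by the previous step, to obtain
\bea
\Big|\frac{\bar Q_{s+1}(x)}{\bar Q_s(x^p)}-\frac{\bar Q_s(x)}{\bar Q_{s-1}(x^p)}\Big|_p\le p^{-s}\qquad\text{for }x\in\frak D,
\eea
so $\big(\bar Q_{s+1}(x)/\bar Q_s(x^p)\big)_{s\ge1}$ is uniformly Cauchy on $\frak D$ and its limit $f(x)$ is an analytic function there with $|f(x)|_p=1$. On $D_{0,1}$ I would then compare coefficients: for each fixed $k$ the coefficient of $x^k$ in $\bar Q_s$ tends $p$-adically, as $s\to\infty$, to $\binom{-1/3}{k}\binom{-2/3}{k}$ (the truncated parameters converge $p$-adically to $-\tfrac13,-\tfrac23$), which is the coefficient of $x^k$ in $F(x)={}_2F_1(2/3,1/3;1;x)$; hence $\bar Q_s\to F$ uniformly on $D_{0,1}$, and since $|F(x^p)|_p=1$ there, the limit equals $f(x)=F(x)/F(x^p)$ on $D_{0,1}$.

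The main point requiring verification is the Lucas congruence $\bar Q_s\equiv\prod_{i<s}\bar Q_1(x^{p^i})\pmod p$; concretely this amounts to checking that the $p$-adic digit patterns of the hypergeometric parameters are stable (constant for $p\equiv1\pmod3$, two-periodic for $p\equiv2\pmod3$), which is where the case distinction appears and where \eqref{132+1} and \eqref{13 23} feed in to supply \eqref{bar QQ} itself. Everything else is a direct transcription of the proof of Theorem~\ref{thm LP}, so I do not expect a genuinely new obstacle.
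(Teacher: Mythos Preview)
Your proposal is correct and follows exactly the approach the paper takes: the paper's entire proof is the single sentence ``The proof is the same as the proof of Theorem~\ref{thm LP}.'' You have in fact supplied more detail than the paper, in particular the explicit verification of the Lucas-type factorization $\bar Q_s(x)\equiv\prod_{i<s}\bar Q_1(x^{p^i})\pmod p$ in both residue classes $p\equiv\pm1\pmod3$, which is the only point where the argument is not a verbatim transcription of the $\bar P_s$ case.
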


\begin{proof}
The proof is the same as the proof of Theorem \ref{thm LP}.
\end{proof}

\subsection{Remark}

Although the congruences \eqref{132+1} and \eqref{13 23} look the same for $p=3\ell +1$ and $p=3\ell +2$ 
the proofs of them are different as presented above.
The proof of \eqref{132+1} for $p=3\ell+1$ 
uses just any one of the two master polynomials: \,   $\Phi_s(t,x)$ or
$\Psi_s(t,x)$, while the proof of \eqref{13 23} for $p=3\ell+2$ uses the interaction of the two master polynomials
$\Phi_s(t,x)$ and $\Psi_s(t,x)$. Cf. the baby congruences 
\eqref{bc 1}, \eqref{hs}, \eqref{sh}.

\subsection{Congruences related to $-\frac15$, $-\frac25$, $-\frac35$, $-\frac45$ }
\label{sec 6}

In this section we formulate the congruences related to the above rational numbers. The proof of
these congruences is similar to
the corresponding proofs in Sections \ref{sec 4} and \ref{sec 5}.

For $p=5\ell \pm 2$ and any $s\geq1$ we have 
\bean
\label{4132} 
&
{}_2F_1\big([-\frac45]_{s+1}, [-\frac15]_{s+1};1;x\big)\,
{}_2F_1\big([-\frac35]_{s-1}, [-\frac25]_{s-1};1;x^p\big)
\equiv \phantom{aaaaaaaaaaaa}
\\
\notag
&
\equiv
{}_2F_1\big([-\frac 45]_{s}, [-\frac15]_{s};1;x\big)\,
{}_2F_1\big([-\frac35]_{s}, [-\frac25]_{s};1;x^p\big)
\pmod{p^s},
\eean
\bean
\label{3241} 
&
{}_2F_1\big([-\frac35]_{s+1}, [-\frac25]_{s+1};1;x\big)\,
{}_2F_1\big([-\frac45]_{s-1}, [-\frac15]_{s-1};1;x^p\big)
\equiv \phantom{aaaaaaaaaaaa}
\\
\notag
&
\equiv
{}_2F_1\big([-\frac 35]_{s}, [-\frac25]_{s};1;x\big)\,
{}_2F_1\big([-\frac45]_{s}, [-\frac15]_{s};1;x^p\big)
\pmod{p^s}.
\eean
For $p=5\ell \pm 1$ and any $s\geq1$ we have 
\bean
\label{4141} 
&
{}_2F_1\big([-\frac45]_{s+1}, [-\frac15]_{s+1};1;x\big)\,
{}_2F_1\big([-\frac45]_{s-1}, [-\frac15]_{s-1};1;x^p\big)
\equiv \phantom{aaaaaaaaaaaa}
\\
\notag
&
\equiv
{}_2F_1\big([-\frac 45]_{s}, [-\frac15]_{s};1;x\big)\,
{}_2F_1\big([-\frac45]_{s}, [-\frac15]_{s};1;x^p\big)
\pmod{p^s},
\eean
\bean
\label{3232} 
&
{}_2F_1\big([-\frac35]_{s+1}, [-\frac25]_{s+1};1;x\big)\,
{}_2F_1\big([-\frac35]_{s-1}, [-\frac25]_{s-1};1;x^p\big)
\equiv \phantom{aaaaaaaaaaaa}
\\
\notag
&
\equiv
{}_2F_1\big([-\frac 35]_{s}, [-\frac25]_{s};1;x\big)\,
{}_2F_1\big([-\frac35]_{s}, [-\frac25]_{s};1;x^p\big)
\pmod{p^s}.
\eean

Similar congruences hold for rational numbers of the form
$a/b$, where $b$ is a prime and $1-b\leq a \leq-1$.
These congruences will be described somewhere else.

\section{KZ equations}
\label{sec 7}

\subsection{KZ equations}
Let $\g$ be a simple Lie algebra with an invariant scalar product.
The {\it Casimir element}  is 
\bea
\Om = {\sum}_i \,h_i\ox h_i \ \ \in \ \g \ox \g,
\eea
where $(h_i)\subset\g$ is an orthonormal basis.
Let  $V=\otimes_{i=1}^n V_i$ be 
a tensor product of $\g$-modules, $\ka\in\C^\times$ a nonzero number.
The {\it  KZ equations} is the system of differential 
equations on a $V$-valued function $I(z_1,\dots,z_n)$,
\bea
\frac{\der I}{\der z_i}\ =\ \frac 1\ka\,{\sum}_{j\ne i}\, \frac{\Om_{i,j}}{z_i-z_j} I, \qquad i=1,\dots,n,
\eea
where $\Om_{i,j}:V\to V$ is the Casimir operator acting in the $i$th and $j$th tensor factors,
see \cite{KZ, EFK}.

\vsk.2>

This system is a system of Fuchsian first order
 linear differential equations. 
  The equations are defined on the complement in $\C^n$ to the union of all diagonal hyperplanes.
 
\vsk.2>

The object of our discussion is the following particular case. We consider
 the following system of differential and algebraic  equations
for a column $3$-vector $I=(I_1,I_2,I_3)$ depending on variables $z=(z_1,z_2,z_3)$\,:
\bean
\label{KZZ}
\frac{\partial I}{\partial z_1}  
&=&
   {\frac 12} \Big(
   \frac{\Omega_{12}}{z_1 - z_2} 
+\frac{\Omega_{13}}{z_1 - z_3} 
      \Big) I ,
\qquad
\frac{\partial I}{\partial z_2}  
= 
   {\frac 12} \Big(
   \frac{\Omega_{21}}{z_2 - z_1} 
+\frac{\Omega_{23}}{z_2 - z_3} 
      \Big) I ,
      \\
\notag
\frac{\partial I}{\partial z_3}  
&=& 
   {\frac 12} \Big(
   \frac{\Omega_{31}}{z_3 - z_1} 
+\frac{\Omega_{32}}{z_3 - z_2} 
\Big) I,
\qquad \quad
0= I_1+I_2+I_3,
\eean 
where $\Om_{ij}=\Om_{ji}$ and
\bea
 \Omega_{12} =  
 \begin{pmatrix}
 -1 & 1 & 0 
 \\
 1   & -1 & 0
 \\
0   & 0 & 0
\end{pmatrix} ,
\quad
 \Omega_{13} =  
 \begin{pmatrix}
 -1 & 0 & 1 
 \\
 0   & 0 & 0
 \\
1   & 0 & -1
\end{pmatrix} ,
\quad
 \Omega_{23} =  
 \begin{pmatrix}
 0 & 0 & 0 
 \\
 0   & -1 & 1
 \\
0   & 1 & -1
\end{pmatrix} .
\eea       
Denote
\bea
&
H_1(z) =   {\frac 12} \Big(
   \frac{\Omega_{12}}{z_1 - z_2} 
+\frac{\Omega_{13}}{z_1 - z_3} 
      \Big),
      \quad
H_2(z)={\frac 12}\Big(
   \frac{\Omega_{21}}{z_2 - z_1} 
+\frac{\Omega_{23}}{z_2 - z_3} 
      \Big),
      \quad
      H_3(z)= {\frac 12}\Big(
   \frac{\Omega_{31}}{z_3 - z_1} 
+\frac{\Omega_{32}}{z_3 - z_2} 
\Big),
\\
&
\nabla_i^{\on{KZ}} = \frac{\der}{\der z_i} - H_i(z), \qquad i=1,2,3.
\eea
Then the KZ equations can be written as the system of equations,
\bea
\nabla_i^{\on{KZ}}I=0, \quad i=1,2,3,\qquad I_1+I_2+I_3 =0.
\eea

\vsk.2>
System  \eqref{KZZ} is the system of  KZ equations 
with parameter $\ka=2$ associated with the Lie algebra $\sll_2$ and the subspace of
 singular vectors of weight $1$ of the tensor power 
$(\C^2)^{\ox 3}$ of two-dimensional irreducible $\sll_2$-modules, 
up to a gauge transformation, see 
this example in  \cite[Section 1.1]{V2}.

\subsection{Solutions over $\C$}
\label{sec 11.4}

Define the {\it master function}
\bea
\Phi(t,z) = (t-z_1)^{-1/2}(t-z_2)^{-1/2}(t-z_3)^{-1/2}
\eea
and the column 3-vector
\bean
\label{KZ sol} 
I^{(C)}(z) = (I_1(z),I_2(z),I_3(z)):=
\int_{C}
\Big(\frac {\Phi(t,z)}{t-z_1}, \frac {\Phi(t,z)}{t-z_2}, \frac {\Phi(t,z)}{t-z_3}\Big)dt
\,,
\eean
where  $C\subset \C-\{z_1,z_2,z_3\}$  
is a contour on which the integrand  takes its initial value when $t$ encircles $C$.

\begin{thm}[{cf.\cite{V4}}]
The function $I^{(C)}(z)$ is a solution of system \eqref{KZZ}.

\end{thm}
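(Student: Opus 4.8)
The plan is to verify directly that each operator $\nabla_i^{\on{KZ}}$ annihilates $I^{(C)}(z)$, together with the linear relation $I_1+I_2+I_3=0$. The linear relation is immediate: summing the three components of the integrand gives
$\bigl(\tfrac1{t-z_1}+\tfrac1{t-z_2}+\tfrac1{t-z_3}\bigr)\Phi(t,z)
= \tfrac{d}{dt}\bigl(-2\Phi(t,z)\bigr)$ up to a harmless factor, in fact $\frac{\partial}{\partial t}\log\Phi = -\tfrac12\sum_j \tfrac1{t-z_j}$, so $\sum_j \tfrac{\Phi}{t-z_j} = -2\,\partial_t\Phi$, whose integral over the closed cycle $C$ vanishes because the integrand returns to its initial value. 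Hence $I_1+I_2+I_3=0$ on all of $C$.

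For the differential equations, first I would compute $\frac{\partial}{\partial z_i}$ of the $j$-th component $\frac{\Phi(t,z)}{t-z_j}$ under the integral sign. Using $\frac{\partial}{\partial z_i}\log\Phi = \frac1{2(t-z_i)}$ and, for the explicit $(t-z_j)^{-1}$ prefactor, $\frac{\partial}{\partial z_i}(t-z_j)^{-1}=\delta_{ij}(t-z_j)^{-2}$, one gets a bilinear expression in $\tfrac1{t-z_i}$ and $\tfrac1{t-z_j}$ times $\Phi$. Then I would write down $H_i(z)$ applied to the integrand vector and subtract; the claim is that the difference is the $t$-derivative of an explicit vector-valued form $\omega_i(t,z)$ built from $\Phi$ times rational functions of $t$ (with poles only at $z_1,z_2,z_3$), so that $\nabla_i^{\on{KZ}}I^{(C)}(z)=\int_C d_t\omega_i = 0$ by Stokes, exactly as in the scalar prototype \eqref{Stokes}. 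Concretely, $\omega_i$ should be a combination of terms $\frac{(t-z_j)(t-z_k)}{t-z_i}\Phi$ and similar, chosen so that the partial-fraction identities in $t$ close up.

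The main obstacle is purely the bookkeeping of the partial-fraction decompositions: after differentiating and applying $H_i$, one obtains sums of terms $\frac{\Phi}{(t-z_i)(t-z_j)}$, $\frac{\Phi}{(t-z_i)^2}$, and one must recognize the resulting vector as $d_t$ of something. The key algebraic inputs are the identities
$\frac1{(t-z_i)(t-z_j)} = \frac1{z_i-z_j}\bigl(\frac1{t-z_i}-\frac1{t-z_j}\bigr)$
and $\partial_t\bigl((t-z_j)\Phi\bigr) = \Phi - \tfrac12(t-z_j)\Phi\sum_k\tfrac1{t-z_k}$, which convert the $\frac1{z_i-z_j}$ coefficients appearing in $H_i$ into total $t$-derivatives. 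Since the whole system is known to be a gauge transform of the genuine $\sll_2$ KZ system for the weight-$1$ singular subspace of $(\C^2)^{\ox3}$ at level $\kappa=2$ (as cited from \cite{V2}), and the integral \eqref{KZ sol} is the standard hypergeometric/Schechtman--Varchenko solution of that system, the computation is guaranteed to succeed; the only work is to exhibit the primitive $\omega_i$ explicitly.

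An alternative, slightly cleaner route: note that each component $\frac{\Phi}{t-z_j}$ is, up to normalization, $\frac{\partial}{\partial z_j}\Phi$ divided by $\tfrac12$, i.e. $\Phi/(t-z_j) = 2\,\partial_{z_j}\Phi$. Thus $I^{(C)}(z) = 2\bigl(\partial_{z_1},\partial_{z_2},\partial_{z_3}\bigr)\int_C\Phi\,dt$ componentwise, and one can reduce the verification to showing that the scalar function $J(z):=\int_C\Phi\,dt$ satisfies the second-order relations forcing its gradient to solve \eqref{KZZ}; these relations follow again from a single Stokes identity $d_t\bigl(\text{polynomial in }t\cdot\Phi\bigr)=(\dots)\,\Phi\,dt$ analogous to \eqref{Stokes}. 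I would present whichever of the two is shorter; either way the proof is ``differentiate under the integral, apply a Stokes identity, invoke that $C$ is a closed cycle.''
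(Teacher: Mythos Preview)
Your approach is correct and matches the paper's: both reduce the theorem to Stokes' theorem via two explicit identities, and your treatment of the linear relation $I_1+I_2+I_3=0$ is exactly the paper's first identity. The paper supplies the missing primitive explicitly, and it is simpler than your guess: the vector $\omega_i$ is just $\Psi^i(t,z)=(0,\dots,0,-\Phi(t,z)/(t-z_i),0,\dots,0)$ with the nonzero entry in the $i$-th slot, so that $\bigl(\partial_{z_i}-H_i(z)\bigr)$ applied componentwise to the integrand vector equals $\partial_t\Psi^i$.
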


This theorem is a very particular case of the results in \cite{SV1}.

\begin{proof}  
The theorem follows from  Stokes' theorem and the two identities:
\bean
\label{i1}
-\frac 12\,
\Big(\frac {\Phi(t,z)}{t-z_1}
 + \frac{\Phi(t,z)}{t-z_2} + \frac {\Phi(t,z)}{t-z_3}\Big)\,  
=\, \frac{\der\Phi}{\der t}(t,z)\,,
\eean
\bean
\label{i2}
\Big(\frac{\der }{\der z_i}-\frac12
\sum_{j\ne i} \frac {\Omega_{i,j}}{z_i-z_j} \Big)
\Big(\frac {\Phi(t,z)}{t-z_1}, \frac{\Phi(t,z)}{t-z_2},\frac {\Phi(t,z)}{t-z_3}\Big)\,  
= \frac{\der \Psi^i}{\der t} (t,z),
\eean
where  $\Psi^i(t,z)$ is the column $3$-vector   $(0,\dots,0,-\frac{\Phi(t,z)}{t-z_i},0,\dots,0)$ with 
the nonzero element at the $i$-th place. 
\end{proof}

\begin{thm} [{cf. \cite[Formula (1.3)]{V1}}]
\label{thm dim}

All solutions of system \eqref{KZZ} have this form. 
Namely, the complex vector space of solutions of the form \eqref{KZ sol} is $2$-dimensional.

\end{thm}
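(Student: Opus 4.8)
The plan is to show that the monodromy representation of the local system of solutions of \eqref{KZZ} is irreducible of rank $2$, and to identify the space of solutions of the form \eqref{KZ sol} with the whole solution space. First I would restrict attention to the plane $z_1+z_2+z_3=0$ (using the translation and scaling symmetries of the KZ equations, i.e.\ the $\gln$-action, to reduce the effective number of variables) and, after fixing $z_2,z_3$, view \eqref{KZZ} as a Fuchsian ODE in the single variable $z_1$ with regular singular points at $z_1=z_2$, $z_1=z_3$, and $z_1=\infty$. Since the constraint $I_1+I_2+I_3=0$ cuts the ambient $3$-vector down to a plane preserved by all the $H_i(z)$ (each $\Omega_{ij}$ annihilates $(1,1,1)$ after transposition, hence preserves the sum-zero plane), the system is a rank-$2$ system on $\mathbb P^1\setminus\{z_2,z_3,\infty\}$. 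Computing the residue matrices (the restrictions of $\tfrac12\Omega_{1j}$ to the sum-zero plane) and their eigenvalues — these are the local exponents at the three singular points — one sees this is precisely a hypergeometric (Riemann) system; in fact the parameters are those of ${}_2F_1(1/2,1/2;1;\cdot)$ after the standard reduction, matching \eqref{HE}.

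Next I would invoke the classical irreducibility criterion for the Riemann $P$-scheme / hypergeometric equation: a rank-$2$ hypergeometric system is irreducible precisely when none of the exponent differences is an integer in a way that forces a logarithm-free sub/quotient, equivalently when the local monodromies are non-resonant in the relevant sense. For the exponents arising here (half-integers at $z_1=z_2$ and $z_1=z_3$, and the exponents of ${}_2F_1(1/2,1/2;1;x)$ at $\infty$), one checks the monodromy around each puncture is a nontrivial pseudo-reflection or has distinct eigenvalues, and that the product relation forbids a common eigenline; hence the monodromy representation is irreducible, so the local system has no nonzero proper sub-local-system and in particular no rank-$1$ piece. This is the step I expect to be the genuine content: translating the representation-theoretic data (the three $\Omega_{ij}$ plus the weight-$1$ singular-vector subspace of $(\C^2)^{\otimes 3}$) into explicit exponents and then verifying the non-resonance conditions; it is routine but must be done carefully, and it is exactly where the contrast with the $p$-adic case of Section~\ref{sec 7} will live.

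Finally I would argue that the integrals \eqref{KZ sol} span the full $2$-dimensional solution space. This is the hypergeometric/Aomoto–Gelfand philosophy: the twisted homology $H_1$ of $\C\setminus\{z_1,z_2,z_3\}$ with coefficients in the rank-$1$ local system attached to $\Phi(t,z)=\prod (t-z_j)^{-1/2}$ is $2$-dimensional (for generic, here half-integer but still suitable, exponents the dimension equals the number of bounded chambers, which is $2$ for three points on a line), and the period map sending a twisted cycle $C$ to $I^{(C)}(z)$ is injective — nonzero twisted cycles give linearly independent period vectors because the integrand components $\Phi/(t-z_i)$ already span a $2$-dimensional space in the relevant twisted cohomology (one relation \eqref{i1} among the three). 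Combined with the previous theorem, which guarantees $I^{(C)}(z)$ solves \eqref{KZZ}, and with the fact that the solution space of the Fuchsian system is exactly $2$-dimensional, we conclude the map from twisted cycles to solutions is an isomorphism; hence every solution has the form \eqref{KZ sol} and this space is $2$-dimensional, proving Theorem~\ref{thm dim}. The main obstacle, as noted, is the non-resonance bookkeeping for the monodromy — everything else is standard twisted de Rham/homology theory for which I would cite \cite{SV1} and the hypergeometric literature.
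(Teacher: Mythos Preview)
The paper does not prove Theorem~\ref{thm dim}: it is stated with a reference to \cite[Formula~(1.3)]{V1} and no argument is supplied. So there is nothing in the paper to compare against; your outline would be furnishing a proof where the paper has none.

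Your approach is essentially sound, with two caveats. First, your exponent claim is off: the eigenvalues of $\tfrac12\Omega_{12}$ restricted to the sum-zero plane are $0$ and $-1$, not half-integers, so the exponent differences at $z_1=z_2$ and $z_1=z_3$ are integers. Irreducibility of the monodromy still holds (the local monodromies are nontrivial unipotents\,---\,this is, after all, the Legendre family with monodromy group conjugate to $\Gamma(2)\subset SL_2(\Z)$), but you must verify the presence of the logarithms rather than read irreducibility straight off non-integrality of exponent differences. Second, the irreducibility detour is more than the theorem itself requires. Once the system has rank $2$ on the sum-zero plane, the solution space is $2$-dimensional by ODE existence--uniqueness, and the twisted-homology argument you sketch in your final paragraph already shows that the integral solutions span it: twisted $H_1$ of $\C\setminus\{z_1,z_2,z_3\}$ with all exponents $-\tfrac12$ is $2$-dimensional and the period pairing is nondegenerate because none of the exponents or their sum is an integer. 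That is precisely the route of \cite{SV1}, and it renders the monodromy computation superfluous for Theorem~\ref{thm dim} as stated\,---\,though of course irreducibility is exactly what the paper invokes later in Section~\ref{sec 7} to contrast the complex and $p$-adic pictures, so the work is not wasted.
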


\subsection{Solutions as vectors of first derivatives}
\label{sec 11.5}

Consider the elliptic integral
\bean
\label{Lg}
T(z) = T^{(C)}(z) =
\int_C
\Phi(t,z) dt.
\eean
Then
\bean
\label{KI}
I^{(C)}(z) 
=
\,
2\,
\Big(\frac {\der T^{(C)}}{\der z_1},
\frac {\der T^{(C)}}{\der z_2},
\frac {\der T^{(C)}}{\der z_3}\Big).
\eean
Denote
$\nabla T =
\Big(\frac {\der T}{\der z_1},
\frac {\der T}{\der z_2},
\frac {\der T}{\der z_3}\Big)$.
Then the column gradient vector of the function $T(z)$ satisfies the following system of  (KZ) equations
\bean
\label{kzz}
\nabla_i^{\on{KZ}} \nabla T =0, \quad i=1,2,3,\qquad  
\frac {\der T}{\der z_1} +
\frac {\der T}{\der z_2}+
\frac {\der T}{\der z_3}=0.
\eean
This is a system of second order linear differential equations on the function $T(z)$.

\subsection{Solutions modulo $p^s$}
\label{sec:new}

For an integer $s\geq 1$ define the master polynomial
\bean
\label{KZ mp}
\Phi_s(t,z) = \big((t-z_1)(t-z_1)(t-z_1)\big)^{(p^s-1)/2}.
\eean
Define the column 3-vector
\bean
\label{I_s}
I_s(z)=(I_{s,1}(z), I_{s,2}(z), I_{s,3}(z))
\eean
as the coefficient of $t^{p^s-1}$ in the polynomial
\bean
\label{KZ polyn}
\Big(\frac {\Phi_s(t,z)}{t-z_1}, \frac {\Phi_s(t,z)}{t-z_2}, \frac {\Phi_s(t,z)}{t-z_3}\Big).
\eean

\begin{thm} [\cite{V4}]
\label{thm 7.3}
The polynomial $I_s(z)$ is a solution of system \eqref{KZZ}
modulo $p^s$.

\end{thm}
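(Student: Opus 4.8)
The plan is to transplant the proof of the complex statement (identities \eqref{i1} and \eqref{i2} together with Stokes' theorem) to the ring $\Z/p^s\Z$, with Stokes' theorem replaced by the elementary fact that for any $F\in\Z_p[t,z]$ the coefficient of $t^{p^s-1}$ in $\der F/\der t$ equals $p^s$ times the coefficient of $t^{p^s}$ in $F$, hence is divisible by $p^s$. Write $M=(p^s-1)/2$, so that $\Phi_s(t,z)=\bigl((t-z_1)(t-z_2)(t-z_3)\bigr)^M$ and $M\equiv-\tfrac12\pmod{p^s}$, and let $\langle F\rangle$ denote the coefficient of $t^{p^s-1}$ in a (possibly vector-valued) polynomial $F$; thus $I_s(z)=\big\langle\bigl(\Phi_s/(t-z_1),\,\Phi_s/(t-z_2),\,\Phi_s/(t-z_3)\bigr)\big\rangle$. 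The operation $\langle\,\cdot\,\rangle$ is $\Z_p[z]$-linear, commutes with $\der/\der z_i$ and with multiplication by the constant matrices $\Omega_{ij}$, and by the displayed observation it maps $\der_t(\Z_p[t,z])$ into $p^s\Z_p[z]$.

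The first ingredient is the exact identity in $\Z_p[t,z]$
\[
\frac{\der\Phi_s}{\der t}\;=\;M\sum_{j=1}^3\frac{\Phi_s}{t-z_j},
\]
which is just the product rule and makes sense because each $\Phi_s/(t-z_j)$ is a genuine polynomial, since $M\ge1$ for $p^s\ge3$; this is the $p^s$-analogue of \eqref{i1}, the coefficient $-\tfrac12$ there being replaced by $M$. The second ingredient is the $p^s$-analogue of \eqref{i2}: I claim that, exactly in $\Z_p[t,z][(z_i-z_j)^{-1}]$,
\[
\Bigl(\frac{\der}{\der z_i}+M\sum_{j\ne i}\frac{\Omega_{ij}}{z_i-z_j}\Bigr)
\Bigl(\frac{\Phi_s}{t-z_1},\frac{\Phi_s}{t-z_2},\frac{\Phi_s}{t-z_3}\Bigr)
\;=\;\frac{\der\Psi_s^i}{\der t},\qquad i=1,2,3,
\]
where $\Psi_s^i$ is the column $3$-vector with $-\Phi_s/(t-z_i)$ in the $i$-th slot and zeros elsewhere. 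I would check this by the same componentwise computation that establishes \eqref{i2}: \eqref{i2} is really an identity of rational functions valid for the master function $\prod_j(t-z_j)^a$ with a free exponent $a$, provided the prefactor $\tfrac12$ is replaced by $-a$ and the matrices $\Omega_{ij}$ are kept fixed. Specializing to $a=M$ and using $M\equiv-\tfrac12\pmod{p^s}$ turns the operator on the left into $\nabla_i^{\on{KZ}}$ modulo $p^s$.

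Finally I would apply $\langle\,\cdot\,\rangle$ to both identities. From the first, $0\equiv\langle\der\Phi_s/\der t\rangle=M\,(I_{s,1}+I_{s,2}+I_{s,3})\pmod{p^s}$, and since $M$ is a $p$-adic unit this yields the algebraic equation $I_{s,1}+I_{s,2}+I_{s,3}\equiv0\pmod{p^s}$. From the second, using that $\langle\,\cdot\,\rangle$ commutes with $\der/\der z_i$ and with the $\Omega_{ij}$, the left-hand side becomes $\bigl(\der/\der z_i+M\sum_{j\ne i}\Omega_{ij}/(z_i-z_j)\bigr)I_s(z)$ while the right-hand side lies in $p^s\Z_p[z][(z_i-z_j)^{-1}]$; hence $\nabla_i^{\on{KZ}}I_s(z)\equiv0\pmod{p^s}$ for $i=1,2,3$. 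Together these say precisely that $I_s(z)$ solves \eqref{KZZ} modulo $p^s$. The one genuinely new point is the second identity above — equivalently, that the verification of \eqref{i2} goes through verbatim with $-\tfrac12$ replaced by the integer $M$; everything else is the bookkeeping that $\langle\,\cdot\,\rangle\circ\der_t$ kills a factor $p^s$ and that $M\equiv-\tfrac12\pmod{p^s}$. This mirrors the proof of Theorem \ref{thm m}, which likewise reduces to the differential-form identity \eqref{Stokes}.
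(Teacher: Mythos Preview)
Your proof is correct and follows essentially the same approach as the paper: the paper records precisely your two identities (with $M=(p^s-1)/2$ in place of $-\tfrac12$) as \eqref{M1} and \eqref{M2} and states that the theorem follows from them. You supply more of the bookkeeping than the paper does---the $\langle\,\cdot\,\rangle$-operator, the observation that $\langle\der_t F\rangle\in p^s\Z_p[z]$, and the use of $M\in\Z_p^\times$---but the underlying argument is the same.
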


\begin{proof}

We have the following modifications of identities \eqref{i1}, \eqref{i2}\,:
\bean
\label{M1}
\frac {p^s-1}2\,
\Big(\frac {\Phi_s(t,z)}{t-z_1}
 + \frac{\Phi_s(t,z)}{t-z_2} + \frac {\Phi_s(t,z)}{t-z_3}\Big)\,  
=\, \frac{\der\Phi_s}{\der t}(t,z)\,,
\eean
\bean
\label{M2}
\Big(\frac{\der }{\der z_i} +  \frac {p^s-1}2
\sum_{j\ne i} \frac {\Omega_{i,j}}{z_i-z_j} \Big)
\Big(\frac {\Phi_s(t,z)}{t-z_1}, \frac{\Phi_s(t,z)}{t-z_2},\frac {\Phi_s(t,z)}{t-z_3}\Big)\,  
= \frac{\der \Psi_s^i}{\der t} (t,z),
\eean
where  $\Psi_s^i(t,z)$ is the column $3$-vector   $(0,\dots,0,-\frac{\Phi_s(t,z)}{t-z_i},0,\dots,0)$ with 
the nonzero element at the $i$-th place. Theorem \ref{thm 7.3} follows from these identities.
\end{proof}

\subsection{$p^s$-Approximation polynomials of $T(z)$}
Define the $p^s$-approximation polynomial $T_s(z)$ 
of the elliptic integral $T(z)$ as the coefficient of $t^{p^s-1}$ in the master polynomial
$\Phi_s(t,z)$,
\bean
\label{Ts}
&&
\\
\notag
&&
\!\!\!\!\!
T_s(z) = (-1)^{(p^s-1)/2}\sum_{k_1+k_2+k_3 = (p^s-1)/2} \binom{(p^s-1)/2}{k_1}\binom{(p^s-1)/2}{k_2}\binom{(p^s-1)/2}{k_3}
z_1^{k_1}z_2^{k_2}z_3^{k_3}.
\eean
We put $T_0(x)=1$.

The polynomial $T_s(z_1,z_2,z_3)$ is symmetric with respect to permutations of $z_1,z_2,z_3$
and
\bean
\label{T to P}
T_s(1,z_2,0) = P_s(z_2),
\eean
where $P_s(x)$ is defined in \eqref{Ps}. The gradient vector 
$\nabla T_s :=\Big(\frac{\der T_s}{\der z_1}, \frac{\der T_s}{\der z_2}, \frac{\der T_s}{\der z_3}\Big)$
of the
\linebreak
 $p^s$-approximation 
polynomial $T_s(z)$ is a solution modulo $p^s$ of system \eqref{KZZ} since 
\bean
\label{nqs}
\nabla T_s = \frac{1-p^s}2\,(I_{s,1}(z), I_{s,2}(z), I_{s,3}(z)).
\eean

\begin{lem}
For  $s\geq 1$
the master polynomials satisfy baby congruences,

\bean
\label{T baby}
\Phi_{s+1}(t,z) \Phi_{s-1}(t^p,z_1^p,z_2^p,z_3^p) \equiv \Phi_{s}(t,z) \Phi_{s}(t^p,z_1^p,z_2^p,z_3^p) \pmod{p^s}. 
\eean
\qed
\end{lem}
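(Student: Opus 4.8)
The plan is to imitate the proof of Lemma~\ref{lem mm=mm}, replacing the single linear factor $t-1$ used in \eqref{bin} by the cubic $u(t,z):=(t-z_1)(t-z_2)(t-z_3)$. By \eqref{KZ mp} one has $\Phi_s(t,z)=u(t,z)^{(p^s-1)/2}$, and, writing $z^p:=(z_1^p,z_2^p,z_3^p)$ and $v(t,z):=(t^p-z_1^p)(t^p-z_2^p)(t^p-z_3^p)$, also $\Phi_s(t^p,z^p)=v(t,z)^{(p^s-1)/2}$. In particular the whole statement lives in the polynomial ring $\Z_p[t,z_1,z_2,z_3]$, and it reduces to a congruence between two explicit monomials in $u$ and $v$.

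First I would record the analogue of the fact underlying \eqref{bin}: since $p$ is odd, $(t-z_i)^p\equiv t^p-z_i^p\pmod p$ (the intermediate binomial coefficients are divisible by $p$ and $(-z_i)^p=-z_i^p$), so multiplying these three congruences gives $u^p\equiv v\pmod p$. Raising this to the power $p^{s-1}$ and iterating the elementary lifting step ``$A\equiv B\pmod{p^k}\Rightarrow A^p\equiv B^p\pmod{p^{k+1}}$'' — exactly the reasoning that takes $(x+1)^p\equiv x^p+1\pmod p$ to $(x+1)^{p^s}\equiv(x^p+1)^{p^{s-1}}\pmod{p^s}$ in the proof of \eqref{bin} — gives
\[
u(t,z)^{p^s}\;\equiv\;v(t,z)^{p^{s-1}}\pmod{p^s}.
\]

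To finish I would split the exponents via $\frac{p^{s+1}-1}2=\frac{p^s-1}2+\frac{p-1}2\,p^s$ and $\frac{p^s-1}2=\frac{p^{s-1}-1}2+\frac{p-1}2\,p^{s-1}$, which exhibit the common factor $u^{(p^s-1)/2}v^{(p^{s-1}-1)/2}$ on both sides of \eqref{T baby}:
\[
\Phi_{s+1}(t,z)\,\Phi_{s-1}(t^p,z^p)=u^{(p^s-1)/2}\,v^{(p^{s-1}-1)/2}\,\bigl(u^{p^s}\bigr)^{(p-1)/2},
\]
\[
\Phi_{s}(t,z)\,\Phi_{s}(t^p,z^p)=u^{(p^s-1)/2}\,v^{(p^{s-1}-1)/2}\,\bigl(v^{p^{s-1}}\bigr)^{(p-1)/2}.
\]
Their difference is $u^{(p^s-1)/2}v^{(p^{s-1}-1)/2}\bigl((u^{p^s})^{(p-1)/2}-(v^{p^{s-1}})^{(p-1)/2}\bigr)$, which is $\equiv0\pmod{p^s}$ by the displayed congruence (raising congruent elements to a common power preserves the congruence). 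The case $s=1$ is covered as well, since there $\Phi_0=1$ and the displayed congruence reduces to $u^p\equiv v\pmod p$. I do not expect any real obstacle: the only points that require a moment's care are the oddness of $p$ in the freshman's-dream step, the two exponent identities, and the observation that everything stays inside a polynomial ring, so that — in contrast to the constant-term congruences \eqref{CO} and \eqref{ex} for $P_s$, where Theorem~\ref{thm cg} was needed — no cancellation of polynomial factors occurs here.
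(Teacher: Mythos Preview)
Your argument is correct and is precisely the reasoning the paper has in mind: the lemma is stated with a bare \qed\ and no proof text, relying on the same mechanism as Lemma~\ref{lem mm=mm} and formula~\eqref{bin}, which you have simply written out in full with $u=(t-z_1)(t-z_2)(t-z_3)$ playing the role of $x+1$. The exponent splitting and the lifting $u^{p^s}\equiv v^{\,p^{s-1}}\pmod{p^s}$ are exactly right, and no further ingredient is needed.
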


\begin{thm}
\label{thm T}
For  $s\geq 1$ we have

\bean
\label{TT}
T_{s+1}(z_1,z_2,z_3) \,
T_{s-1}(z_1^p,z_2^p,z_3^p) \,\equiv\, T_{s}(z_1,z_2,z_3)\, T_{s}(z_1^p,z_2^p,z_3^p)
\pmod{p^s}.
\eean

\end{thm}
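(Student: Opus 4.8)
The plan is to derive the congruence \eqref{TT} as another application of Theorem \ref{thm cg}, exactly in the spirit of the proof of Theorem \ref{conj B} via Theorem \ref{thm C_k}. First I would pass to the "dehomogenized" master polynomial. Write
\bea
\hat\Phi_s(t,z) = t^{-(p^s-1)}\Phi_s(t,z) = \big((1-z_1/t)(1-z_2/t)(1-z_3/t)\big)^{(p^s-1)/2} t^{(p^s-1)/2},
\eea
so that $T_s(z) = (-1)^{(p^s-1)/2}\on{CT}_t\big((1-z_1/t)(1-z_2/t)(1-z_3/t)\big)^{(p^s-1)/2}$ up to the obvious bookkeeping; more precisely I would use the clean identity
\bea
(-1)^{(p^s-1)/2}T_s(z) = \on{CT}_t\Big[\big((t+z_1)(t+z_2)(t+z_3)\big)^{(p^s-1)/2}\,t^{-3(p^s-1)/2}\Big]
\eea
or, even more simply, express $(-1)^{(p^s-1)/2}T_s(z)$ as the constant term in $t$ of $G(t,z)^{1+p+\dots+p^{s-1}}$ for a single fixed Laurent polynomial $G(t,z)$ with coefficients in $\Z_p$. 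The natural choice is $G(t,z) = \big((t-z_1)(t-z_2)(t-z_3)\big)^{(p-1)/2} t^{-3(p-1)/2}$, using $(p^s-1)/2 = \frac{p-1}{2}(1+p+\dots+p^{s-1})$, so that $\Phi_s(t,z) = \prod_{i=0}^{s-1}\big(G(t,z)\,t^{3(p-1)/2}\big)^{p^i}$ and hence $(-1)^{(p^s-1)/2}T_s(z) = \on{CT}_t\big[G(t,z)^{1+p+\dots+p^{s-1}}\big]$ after dividing out the total power of $t$, which is exactly $p^s-1$.

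Second I would check admissibility. The relevant tuple is the $(s+1)$-tuple $(G,G,\dots,G)$ with constant entry $G(t,z)$; its Newton polytope in the $t$-variable is the interval $[-3(p-1)/2,\,3(p-1)/2]$, symmetric about $0$ with $0$ its only interior lattice point. For admissibility in the sense of Definition \ref{defn2} one needs
\bea
\big(N(G)+pN(G)+\dots+p^{j}N(G)\big)\cap p^{j+1}\Z = \{0\},
\eea
and since $N(G)+pN(G)+\dots+p^jN(G) = \big[-\tfrac{3(p-1)}{2}(1+p+\dots+p^j),\,\tfrac{3(p-1)}{2}(1+p+\dots+p^j)\big]$ and $\tfrac{3(p-1)}{2}\cdot\tfrac{p^{j+1}-1}{p-1} = \tfrac32(p^{j+1}-1) < 2p^{j+1}$, the only multiple of $p^{j+1}$ in that interval is $0$; so the tuple is admissible. (This is the same elementary estimate already used implicitly in Theorem \ref{conj B}.)

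Third, I would apply Theorem \ref{thm cg} with $z$-variables being $z=(z_1,z_2,z_3)$, with $a$ the $s$-tuple $(G,\dots,G)$, with $a'$ the $(s-1)$-tuple $(G,\dots,G)$, and with $b = c = (G)$ the singleton. Then $a*b$ is the constant $(s+1)$-tuple, $a'*c$ and $a*c$ are the constant $s$-tuple, $a'*b$ is the constant $s$-tuple, and $\widetilde{a*b}$, $\widetilde{a*c}$, etc.\ are exactly $G^{1+p+\dots+p^s}$, $G^{1+p+\dots+p^{s-1}}$ and so on; the conclusion \eqref{Da} reads
\bea
\on{CT}_t\big(G^{1+\dots+p^s}\big)(z)\,\on{CT}_t\big(G^{1+\dots+p^{s-1}}\big)(z^p)
\equiv \on{CT}_t\big(G^{1+\dots+p^{s-1}}\big)(z^p)\cdot(\text{sic})\pmod{p^{l(a)}=p^s},
\eea
which after translating the constant terms back through $(-1)^{(p^s-1)/2}T_s(z) = \on{CT}_t[G^{1+\dots+p^{s-1}}]$ and noting the sign factors $(-1)^{(p^{s+1}-1)/2}(-1)^{(p^{s-1}-1)/2} = (-1)^{(p^s-1)/2}(-1)^{(p^s-1)/2}$ cancel, is precisely \eqref{TT}. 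The only point requiring a moment's care is the verification that the power of $t$ one divides out is uniformly $p^s-1$ across all four constant terms so the reductions are compatible, and the tiny sign chase; I expect no genuine obstacle, since the baby congruence \eqref{T baby} already guarantees the shape of the answer and Theorem \ref{thm cg} upgrades it to the individual-factor statement. Alternatively one can bypass the dehomogenization entirely by expanding \eqref{T baby} as a congruence of constant terms in $t$ and invoking Theorem \ref{thm cg} directly on the admissible tuple of Laurent polynomials $\big((t-z_1)(t-z_2)(t-z_3)\big)^{(p-1)/2}t^{-3(p-1)/2}$ — this is the route I would actually write down.
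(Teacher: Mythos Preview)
Your overall strategy\,---\,write $T_s(z)$ as the constant term of a product $G(t,z)\,G(t,z)^p\cdots G(t,z)^{p^{s-1}}$ for a single Laurent polynomial $G$, check that the constant tuple $(G,\dots,G)$ is admissible, and invoke Theorem~\ref{thm cg}\,---\,is exactly what the paper does. But your specific choice of $G$ is wrong, and the errors are not cosmetic.

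With your $G(t,z)=\big((t-z_1)(t-z_2)(t-z_3)\big)^{(p-1)/2}\,t^{-3(p-1)/2}$ one has
\[
G(t,z)\,G(t,z)^p\cdots G(t,z)^{p^{s-1}}=t^{-3(p^s-1)/2}\,\Phi_s(t,z),
\]
so the constant term in $t$ is the coefficient of $t^{3(p^s-1)/2}$ in $\Phi_s$, namely the leading coefficient $1$, not $T_s(z)$ (which is the coefficient of $t^{p^s-1}$). The ``total power of $t$'' you want to divide out is $p^s-1$, not $3(p^s-1)/2$. Furthermore, the Newton polytope of your $G$ in $t$ is $[-3(p-1)/2,\,0]$, not the symmetric interval $[-3(p-1)/2,\,3(p-1)/2]$ you claim; and with the correct polytope the tuple is \emph{not} admissible, since already $-p\in[-3(p-1)/2,0]$ for every odd prime $p$. (Even your arithmetic check for the symmetric interval is off: $\tfrac32(p^{j+1}-1)\ge p^{j+1}$, so $\pm p^{j+1}$ would lie in it.)

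The paper's fix is to take
\[
h(t,z)=t^{\,1-p}\,\big((t-z_1)(t-z_2)(t-z_3)\big)^{(p-1)/2},
\]
so that $h\,h^p\cdots h^{p^{s-1}}=t^{-(p^s-1)}\Phi_s(t,z)$ and $\on{CT}_t$ genuinely returns $T_s(z)$. The Newton polytope of $h$ in $t$ is $[\,1-p,\,(p-1)/2\,]$, and then
\[
N(h)+pN(h)+\dots+p^jN(h)=\big[-(p^{j+1}-1),\,(p^{j+1}-1)/2\,\big],
\]
whose only multiple of $p^{j+1}$ is $0$ because $p^{j+1}-1<p^{j+1}$. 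With this $h$ your argument goes through verbatim; no sign chase is needed since $T_s(z)=\on{CT}_t[h^{1+p+\dots+p^{s-1}}]$ on the nose.
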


\begin{proof}
Let
$h(t,z) = t^{1-p} \big((t-z_1)(t-z_2)(t-z_3)\big)^{(p-1)/2}$.
Then
\bea
T_s(z) = \on{CT}_t\big[h(t,z) h(t,z)^p\dots h(t,z)^{p^{s-1}}\big].
\eea
The tuple of Laurent polynomials $(h(t,z),h(t,z), \dots)$
is admissible in the sense of Definition  \ref{defn2}.  
Now the application of Theorem \ref{thm cg} gives congruence 
\eqref{TT}.
\end{proof}

\subsection{Limits of $T_s(z)$}

Denote  $\bar T_s(z) := (-1)^{(p^s-1)/2} T_s(z)$, 
see \eqref{Ts}, 

\bean
\label{fD T}
 \frak D = \{(z_1,z_2,z_3)\in \Z_p^3 \,\mid |\bar T_1(z_1,z_2,z_3)|_p=1\}.
\eean

Notice that
$\Z_p^3 = \bigcup_{\al,\beta,\ga \in\F_p} D_{\al,1}\times D_{\beta,1}\times D_{\ga,1}$.
Since $\bar T_1(z)$ has coefficients in $\Z_p$,
\bea
\frak D ={\bigcup}^o D_{\al,1}\times D_{\beta,1}\times D_{\ga,1}
\eea
where the summation ${\bigcup}^o$ is over all 
$\al,\beta,\ga \in\F_p$ such that  $|T(\om(\al),\om(\beta),\om(\ga))|_p=1$.
For the same reason we have
\bean
\label{Tzp}
 \frak D = \{(z_1,z_2,z_3)\in \Z_p^3 \,\mid |\bar T_1(z_1^p,z_2^p,z_3^p)|_p=1\}.
 \eean
Denote
\bea
 \frak E =\{ (1,z_2,0)\in \Z_p^3\ |\ |z_2|_p<1 \}.
 \eea
 
 \begin{lem}
 \label{lem KZ exp}
 We have $ \frak E\subset  D_{1,1}\times D_{0,1}\times D_{0,1}\subset \frak D$.
  
 \end{lem}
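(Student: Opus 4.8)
The plan is to treat the two inclusions separately; the first is immediate from the definitions and the second is the only one requiring an argument.

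First I would check $\frak E\subset D_{1,1}\times D_{0,1}\times D_{0,1}$ directly. A point of $\frak E$ has the form $(1,z_2,0)$ with $|z_2|_p<1$; since $\om(1)=1$ and $\om(0)=0$ (because $0$ and $1$ already satisfy $x^p=x$), we get $|1-\om(1)|_p=0<1$, $|z_2-\om(0)|_p=|z_2|_p<1$, and $|0-\om(0)|_p=0<1$, so the point lies in the indicated polydisc.

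For the inclusion $D_{1,1}\times D_{0,1}\times D_{0,1}\subset\frak D$, I would use that $\bar T_1(z)$ has coefficients in $\Z_p$, so $|\bar T_1(z)|_p=1$ is equivalent to $\bar T_1(z)\not\equiv 0\pmod p$. From \eqref{Ts} and the normalization $\bar T_s=(-1)^{(p^s-1)/2}T_s$ one has $\bar T_1(z)=\sum_{k_1+k_2+k_3=(p-1)/2}\binom{(p-1)/2}{k_1}\binom{(p-1)/2}{k_2}\binom{(p-1)/2}{k_3}z_1^{k_1}z_2^{k_2}z_3^{k_3}$. For $z=(z_1,z_2,z_3)$ in the polydisc we have $z_1\equiv 1$, $z_2\equiv 0$, $z_3\equiv 0\pmod p$, so every monomial with $k_2+k_3>0$ vanishes modulo $p$ and only the term $k_1=(p-1)/2$, $k_2=k_3=0$ survives, giving $\bar T_1(z)\equiv z_1^{(p-1)/2}\equiv 1\pmod p$. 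Hence $|\bar T_1(z)|_p=1$, so $z\in\frak D$, using also the description of $\frak D$ as a union of polydiscs determined by the residues of $z_1,z_2,z_3$.

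There is no genuine obstacle here; the only points to be careful about are that the sign in front of the sum in \eqref{Ts} cancels under multiplication by $(-1)^{(p^s-1)/2}$ (as $p^s-1$ is even) and the values $\om(0)=0$, $\om(1)=1$. As a consistency check, specializing to $z_1=1$, $z_3=0$ and using \eqref{T to P} recovers $\bar T_1(1,z_2,0)=\bar P_1(z_2)\equiv 1\pmod p$ for $|z_2|_p<1$, in agreement with $D_{0,1}\subset\frak D$ from Theorem \ref{thm LP}.
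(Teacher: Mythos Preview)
Your proof is correct and follows essentially the same approach as the paper. The paper's proof is extremely terse\,---\,it says only that the first inclusion is clear and that the second follows from the equality $\bar T_s(1,0,0)=1$\,---\,while you spell out both steps in detail; the key computation (that only the monomial with $k_1=(p-1)/2$, $k_2=k_3=0$ survives modulo $p$ on the polydisc, yielding $\bar T_1(z)\equiv 1\pmod p$) is exactly what underlies the paper's one-line justification.
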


\begin{proof}  The first inclusion is clear. The second inclusion follows from the equality
\\
$\bar T_s(1,0,0)=1$.
\end{proof}

\begin{thm}
\label{thm LT}

For $s\geq 1$ the rational function $\frac{\bar T_{s+1}(z)}{\bar T_s(z^p)}$  is regular on $\frak D$.
The sequence  $\big(\frac{\bar T_{s+1}(z)}{\bar T_s(z^p)}\big)_{s\geq 1}$ uniformly converges on 
$\frak D$.
The limiting analytic function $f(z)$,
restricted to $\frak E$, equals the ratio
$\frac{F(z_2)}{F(z_2^p)}$   where 
$F(z_2):={}_2F_1(1/2,1/2;1;z_2)$ is defined by the convergent power series  \eqref{111}.
We also have $|f(z)|_p=1$ for every $z\in \frak D$.

\end{thm}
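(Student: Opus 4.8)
The plan is to follow the proof of Theorem~\ref{thm LP} almost verbatim, replacing the one-variable polynomials $\bar P_s$ by the three-variable polynomials $\bar T_s$ and the disc $D_{0,1}$ by the polydisc $\frak D$. First I would record, as was done just above the statement, that $\frak D$ is a finite union of products of Teichm\"uller discs $D_{\al,1}\times D_{\beta,1}\times D_{\gamma,1}$, namely those for which $|\bar T_1(\om(\al),\om(\beta),\om(\gamma))|_p=1$, and that $\frak D=\{z\in\Z_p^3 : |\bar T_1(z^p)|_p=1\}$ by \eqref{Tzp}. Applying Lucas' theorem to the binomial coefficients in \eqref{Ts} gives $\bar T_s(z)\equiv\bar T_1(z)\,\bar T_1(z^p)\cdots\bar T_1(z^{p^{s-1}})\pmod p$, whence $|\bar T_s(z)|_p=|\bar T_s(z^p)|_p=1$ for all $s\ge1$ and all $z\in\frak D$. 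Therefore each $\frac{\bar T_{s+1}(z)}{\bar T_s(z^p)}$ is regular (analytic) on $\frak D$: on each of the finitely many polydiscs making up $\frak D$ the denominator is a unit, hence invertible as a power series.

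Next I would extract the uniform convergence from the Dwork-type congruence \eqref{TT} of Theorem~\ref{thm T}, which holds verbatim with $\bar T_s$ replacing $T_s$ (the sign factors $(-1)^{(p^s-1)/2}$ cancel). Dividing that congruence by the unit $\bar T_s(z^p)\,\bar T_{s-1}(z^p)$ gives $\big|\frac{\bar T_{s+1}(z)}{\bar T_s(z^p)}-\frac{\bar T_s(z)}{\bar T_{s-1}(z^p)}\big|_p\le p^{-s}$ for $z\in\frak D$, exactly as in \eqref{COr}. Hence the sequence $\big(\frac{\bar T_{s+1}(z)}{\bar T_s(z^p)}\big)_{s\ge1}$ is uniformly Cauchy on $\frak D$ and converges to an analytic function $f(z)$; and because every term has $p$-adic absolute value $1$ while the successive differences tend to $0$, the ultrametric inequality forces $|f(z)|_p=1$ on all of $\frak D$.

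Finally, to identify $f$ on $\frak E$, I would use the specialization \eqref{T to P}: for $z=(1,z_2,0)$ one has $z^p=(1,z_2^p,0)$, so $\bar T_s(z)=\bar P_s(z_2)$ and $\bar T_s(z^p)=\bar P_s(z_2^p)$, and the restriction of $\frac{\bar T_{s+1}(z)}{\bar T_s(z^p)}$ to $\frak E$ is precisely $\frac{\bar P_{s+1}(z_2)}{\bar P_s(z_2^p)}$. Since $\frak E\subset\frak D$ by Lemma~\ref{lem KZ exp} and $\frak E$ corresponds to $z_2\in D_{0,1}$, Theorem~\ref{thm LP} identifies the limit on $\frak E$ with $\frac{F(z_2)}{F(z_2^p)}$, where $F(z_2)={}_2F_1(1/2,1/2;1;z_2)$ is the series in \eqref{111}. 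I do not expect a genuine obstacle: everything reduces to bookkeeping already carried out for $\bar P_s$. The one point that needs a small check is the passage from the mod-$p$ factorization of $\bar T_s$, together with the description of $\frak D$ as a union of Teichm\"uller polydiscs, to regularity on the whole multivariable domain $\frak D$ — but this is precisely what the discussion preceding the statement establishes.
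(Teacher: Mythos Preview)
Your proposal is correct and follows essentially the same route as the paper's proof: the Lucas factorization of $\bar T_s$ modulo $p$ to obtain regularity on $\frak D$, the congruence \eqref{TT} to produce the uniform Cauchy estimate \eqref{TTr}, and the specialization $\bar T_s(1,z_2,0)=\bar P_s(z_2)$ to identify the limit on $\frak E$. The only cosmetic difference is that you cite Theorem~\ref{thm LP} for the identification on $\frak E$, whereas the paper repeats the coefficient-convergence argument $\binom{(p^s-1)/2}{k}^2\to\binom{-1/2}{k}^2$ directly; either is fine.
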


\begin{proof}
By Lucas' theorem  $\bar T_s(z) \equiv \bar T_1(z) \bar T_1(z^p)\dots \bar  T_1(x^{p^{s-1}})$ $\pmod{p}$. 
Hence
$|\bar  T_s(z)|_p = |\bar  T_s(z^p)|_p=1$ for $s\geq 1$, $z\in\frak D$. Hence the rational 
functions 
$\frac{\bar T_{s+1}(z)}{\bar T_s(z^p)}$  are regular on $\frak D$

Congruence \eqref{TT} implies that
\bean
\label{TTr}
\Big|\frac{\bar T_{s+1}(z)}{\bar T_{s}(z^p)} - \frac{\bar T_{s}(z)}{\bar T_{s-1}(z^p)}\Big|_p\leq p^{-s}
\quad\on{for}\ z\in\frak D.
\eean
This shows the uniform convergence of $\big(\frac{\bar T_{s+1}(z)}{\bar T_s(z^p)}\big)_{s\geq 1}$
 on  $\frak D$.
For the limiting function 
$f(z)$  we have
$|f(z)|_p=1$ for $z\in \frak D$.

We have
$\bar T_s(1,z_2,0) = \bar P_s(z_2) = \sum_{k} \binom{(p^s-1)/2}{k}^2z_2^k$.
Clearly, for any fixed index $k$ the coefficient $\binom{(p^s-1)/2}{k}^2$ of $z_2^k$ in $\bar T_s(1,z_2,0)$
converges $p$-adically  to the coefficient $\binom{-1/2}{k}^2$ of $z_2^k$ in $F(z_2)$.
Hence  the sequence $(\bar T_s(1,z_2,0))_{s\geq 1}$ converges to $F(z_2)$ on $\frak E$, so that
$f(1,z_2,0) =\frac{F(z_2)}{F(z^p)}$  on  $\frak E$.
The theorem is proved.
\end{proof}

\begin{rem}
The analytic function $f(z)$ of Theorem \ref{thm lim} exhibits behavior very different from the behavior
 of the corresponding ratio $T^{(C)}(z)/T^{(C)}(z^p)$ of  complex elliptic integrals.  
 
By Theorem \ref{thm lim} the function $f(z)$ restricted to the one-dimensional discs 
 $\{ (z_1,0,1)\in \Z_p^3\ |\ |z_1|_p<1 \}$, $\{ (1,z_2,0)\in \Z_p^3\ |\ |z_2|_p<1 \}$,
$\{ (0,1,z_3)\in \Z_p^3\ |\ |z_3|_p<1 \}$ equals $\frac{F(z_1)}{F(z_1^p)}$ , $\frac{F(z_2)}{F(z_2^p)}$ ,
$\frac{F(z_3)}{F(z_3^p)}$, respectively. 

In the complex case, for the ratio $T^{(C_1)}(z)/T^{(C_1)}(z^p)$ 
to be equal to $\frac{F(z_1)}{F(z_1^p)}$  on $\{ (z_1,0,1)\in \C^3\ |\ |z_1|<1 \}$,
 the contour $C_1$ must be the cycle 
 on the elliptic curve $y^2=(t-z_1)t(t-1)$ vanishing at $z_1=0$. Similarly
for $T^{(C_2)}(z)/T^{(C_2)}(z^p)$ to be equal to $\frac{F(z_2)}{F(z_2^p)}$ on $\{ (1,z_2,0)\in \C^3\ |\ |z_2|<1 \}$,
  the contour $C_2$ must be the cycle 
   on the elliptic curve $y^2=(t-1)(t-z_2)t$ vanishing at $z_2=0$,
   and
for $T^{(C_3)}(z)/T^{(C_3)}(z^p)$ to be equal to $\frac{F(z_3)}{F(z_3^p)}$ on $\{ (0,1,z_3)\in \C^3\ |\ |z_3|<1 \}$, 
  the contour $C_3$ must be the cycle 
   on the elliptic curve $y^2=t(t-1)(t-z_3)$ vanishing at $z_3=0$.
   But these three local complex analytic functions are not restrictions of a single univalued
   complex analytic function
   due to the irreducibility of the monodromy representation of the Gauss--Manin connection associated
   with  the      family of elliptic curves $y^2=(t-z_1)(t-z_2)(t-z_3)$.

\end{rem}

For $i,j\in\{1,2,3\}$ and $s\geq 1$ denote
\bea
f_s(z)=T_{s}(z)\Big/T_{s-1}(z^p),
\qquad
\eta^{(i)}_s(z) = \frac{\der T_s}{\der z_i}(z)\Big/T_s(z), 
\qquad
\eta^{(ij)}_s(z) = \frac{\der^2 T_s}{\der z_i\der z_j}(z)\Big/T_s(z).
\eea

\begin{thm}
\label{thm ddT}

For $s\geq 1$ the rational functions 
$\eta^{(i)}_s(z)$ and $\eta^{(ij)}_s(z)$ are regular on $\frak D$.
The sequences of rational functions
$(\eta^{(i)}_s(z))_{s\geq 1}$ and  $(\eta^{(ij)}_s(z))_{\geq 1}$
converge uniformly on $\frak D$ to analytic functions.
If $\eta^{(j)}$ and $\eta^{(ij)}$ denote the corresponding limits, then
\bean
\label{rel eta}
&
\eta^{(1)} + \eta^{(2)} + \eta^{(3)}=0,
\\
\label{rel eta2}
&
\eta^{(j1)} + \eta^{(j2)} + \eta^{(j3)}=0,  \qquad j=1,2,3,
\\
\label{rel eta3}
&
\frac{\der}{\der z_j}\eta^{(i)}=\eta^{(ji)} - \eta^{(i)}\eta^{(j)}.
\eean

\end{thm}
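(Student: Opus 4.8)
The plan is to reduce everything to sup\nobreakdash-norm estimates for rational functions on the polydisc components $D_{\al,1}\times D_{\bt,1}\times D_{\ga,1}$ of $\frak D$, combined with the elementary fact that on such a polydisc (of polyradius $p^{-1}$) partial differentiation of a bounded rigid analytic function costs at most one power of $p$, i.e. $\|\der_ih\|_{\frak D}\le p\,\|h\|_{\frak D}$. Regularity of the $\eta^{(i)}_s$ and $\eta^{(ij)}_s$ on $\frak D$ is immediate: by Lucas' theorem, exactly as in the proof of Theorem~\ref{thm LT}, both $T_s(z)$ and $T_s(z^p)$ are $p$-adic units at every point of $\frak D$, so $1/T_s$ is analytic on $\frak D$ with $\|1/T_s\|_{\frak D}=1$ (the non-constant part of $T_s$ at a Teichm\"uller centre of $\frak D$ has norm $\le p^{-1}$, its value there being a unit).

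First I would extract a differentiated form of the Dwork congruence \eqref{TT}. Writing \eqref{TT} as a polynomial identity $T_{s+1}(z)T_{s-1}(z^p)-T_s(z)T_s(z^p)=p^sG_s(z)$ with $G_s\in\Z_p[z]$ and dividing by the unit $T_s(z)T_s(z^p)$ gives
\bea
R_s(z):=\frac{T_{s+1}(z)\,T_{s-1}(z^p)}{T_s(z)\,T_s(z^p)}-1=p^s\,\frac{G_s(z)}{T_s(z)\,T_s(z^p)}\,,
\eea
so $\|R_s\|_{\frak D}\le p^{-s}<1$, hence $\|\log(1+R_s)\|_{\frak D}\le p^{-s}$ (the higher-order terms of the logarithm series are no larger) and $\|\der_j\log(1+R_s)\|_{\frak D}\le p^{1-s}$. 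Applying $\der_j\log(\cdot)$ to $1+R_s=T_{s+1}(z)T_{s-1}(z^p)\big/\bigl(T_s(z)T_s(z^p)\bigr)$ and using $\der_j\log T_m(z^p)=p\,z_j^{p-1}\,\eta^{(j)}_m(z^p)$, I obtain the recursion
\bea
\bigl(\eta^{(j)}_{s+1}(z)-\eta^{(j)}_s(z)\bigr)-p\,z_j^{p-1}\bigl(\eta^{(j)}_s(z^p)-\eta^{(j)}_{s-1}(z^p)\bigr)=\der_j\log(1+R_s)(z)\,.
\eea

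Setting $\delta_s:=\eta^{(j)}_{s+1}-\eta^{(j)}_s$ and unrolling the recursion (with $\delta_0=\eta^{(j)}_1$, as $T_0=1$) writes $\delta_s(z)$ as $\sum_{k=0}^{s-1}p^k z_j^{p^k-1}\bigl(\der_j\log(1+R_{s-k})\bigr)(z^{p^k})+p^sz_j^{p^s-1}\eta^{(j)}_1(z^{p^s})$; the $k$-th summand has $\frak D$-norm $\le p^{-k}\cdot p^{\,1-(s-k)}=p^{1-s}$, and the final term has norm $\le p^{-s}\|\eta^{(j)}_1\|_{\frak D}$, a fixed multiple of $p^{-s}$. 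Therefore $\|\eta^{(j)}_{s+1}-\eta^{(j)}_s\|_{\frak D}\to0$, so $(\eta^{(i)}_s)_{s\ge1}$ converges uniformly on $\frak D$ to an analytic function $\eta^{(i)}$; likewise $\|\der_i(\eta^{(j)}_{s+1}-\eta^{(j)}_s)\|_{\frak D}\le p^{2-s}\to0$, so $\der_i\eta^{(j)}_s\to\der_i\eta^{(j)}$ uniformly. Since $\eta^{(ij)}_s=\der_i\bigl(\der_jT_s/T_s\bigr)+(\der_iT_s/T_s)(\der_jT_s/T_s)=\der_i\eta^{(j)}_s+\eta^{(i)}_s\eta^{(j)}_s$, the sequence $(\eta^{(ij)}_s)$ then converges uniformly on $\frak D$ to $\eta^{(ij)}:=\der_i\eta^{(j)}+\eta^{(i)}\eta^{(j)}$ — which, after swapping the roles of $i$ and $j$ and using that the mixed partials of $T_s$ commute, is exactly relation \eqref{rel eta3}. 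It remains to get \eqref{rel eta} and \eqref{rel eta2}: comparing coefficients of $t^{p^s-1}$ on the two sides of identity \eqref{M1} and using $\der_{z_i}\Phi_s=-\tfrac{p^s-1}2\,\Phi_s/(t-z_i)$ gives $(\der_1+\der_2+\der_3)T_s(z)=-p^s\La_s(z)$ with $\La_s(z):=[t^{p^s}]\Phi_s(t,z)\in\Z[z]$; dividing by $T_s$ gives $\eta^{(1)}_s+\eta^{(2)}_s+\eta^{(3)}_s=-p^s\La_s/T_s$ of $\frak D$-norm $\le p^{-s}$, and differentiating in $z_j$ beforehand gives $\eta^{(j1)}_s+\eta^{(j2)}_s+\eta^{(j3)}_s=-p^s(\der_j\La_s)/T_s$ of $\frak D$-norm $\le p^{1-s}$, so letting $s\to\infty$ yields \eqref{rel eta} and \eqref{rel eta2}.

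The genuinely load-bearing step is the passage from the coefficientwise congruence \eqref{TT} to the sup-norm bound $\|R_s\|_{\frak D}\le p^{-s}$ (which works precisely because $R_s$ is $p^s$ times a ratio of a $\Z_p$-polynomial and a unit whose reciprocal has $\frak D$-norm $1$), together with an honest accounting of how many powers of $p$ are lost under differentiation on the polyradius\nobreakdash-$p^{-1}$ discs making up $\frak D$; once those are in hand, the rest is just assembling the estimates in the telescoping sum for $\delta_s$. A minor point to be checked carefully is that uniform $p$-adic convergence of the rational functions $\eta^{(j)}_s$ (together with the derivative bound above) legitimately passes to derivatives, which is standard on each polydisc component since the power-series coefficients converge.
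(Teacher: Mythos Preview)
Your proof is correct and follows essentially the same strategy as the paper: both bound the Dwork ratio via congruence \eqref{TT}, use that differentiation on the polyradius-$p^{-1}$ polydiscs comprising $\frak D$ costs at most one power of $p$, and telescope to conclude uniform convergence of the logarithmic derivatives $\eta^{(i)}_s$. The paper's bookkeeping differs only cosmetically---it works with the Euler operators $\delta_i=z_i\partial_i$ and the ratios $f_s=T_s/T_{s-1}(z^p)$ to produce a closed-form series $\eta^{(i)}=\frac{1}{z_i}\sum_{k\ge0}p^k(\delta_i f/f)(z^{p^k})$, whereas you establish Cauchy-ness of $(\eta^{(j)}_s)$ directly via $\log(1+R_s)$---but the substance is the same.
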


\begin{proof}
Denote
$\delta_i=z_i\frac{\der}{\der z_i}$.\
By Theorem \ref{thm LT} the sequence $(f_s)$ uniformly converges to 
the analytic function $f$
on $\frak D$.
Therefore, the sequence of the derivatives $(\frac{\der}{\der z_i}f_s)$ uniformly
converges on $\frak D$ to $\frac{\der}{\der z_i}f$.
Hence the  sequence $\big((\delta_if_s)/f_s\big)$ uniformly
converges  on $\frak D$ to the function $(\delta_if)/f$.
At the same time
\bea
\frac{\delta_if_s}{f_s}(z)
=\frac{\delta_i T_s}{T_s}(z)- p\, \frac{\delta_i T_{s-1}}{T_{s-1}}(z^p)
\eea
and, more generally,
\bea
\frac{\delta_if_{s-k}}{f_{s-k}}(z^{p^k})
=\frac{\delta_iT_{s-k}}{T_{s-k}}(z^{p^k})
-p\,\frac{\delta_iT_{s-k-1}}{T_{s-k-1}}(z^{p^{k+1}})
\qquad\text{for}\quad k=0,1,\dots,s.
\eea
Summing the relations up with suitable weights to get telescoping we obtain, for any $r\le s$,
\bea
\sum_{k=0}^{r-1}\,p^k\, \frac{\delta_if_{s-k}}{f_{s-k}}(z^{p^k})
\,=\, \frac{\delta_iT_s}{T_s}(z)\,-\,p^r\, \frac{\delta_i T_{s-r}}{T_{s-r}}(z^{p^r}).
\eea
Choosing $r=[s/2]$ and taking the limit as $s\to\infty$ on both sides 
we arrive at
\bea
\sum_{k=0}^\infty \,p^k\,\frac{\delta_i f}{f}(z^{p^k})
=\lim_{s\to\infty} \frac{\delta_iT_s}{T_s}(z).
\eea
The series on the left uniformly converges on $\frak D$.
Hence there exists the limit on the right-hand side. This means that
\bea
\eta^{(i)}(x) =\lim_{s\to\infty} \frac{\frac{\der}{\der z_i}T_s}{T_s}(z)
=\frac1{z_i}\sum_{k=0}^\infty 
\,p^k\,\frac{\delta_i f}{f}(z^{p^k}) .
\eea
One can further differentiate the resulting equality with respect to any 
of the variables $z_1,z_2,z_3$ to get by induction formulas for $\eta^{(ij)}$
and more generally for $\eta^{(ijk\dots)}$.
Note that \eqref{rel eta3} comes out from differentiating logarithmic derivatives.

Formulas \eqref{rel eta} and \eqref{rel eta2} follow from \eqref{M1}. The theorem is proved.
\end{proof}

\begin{thm}
\label{thm lim}
 We have  the following system of equations on $\frak D$\,:
\bean
\label{eq eta}
&\phantom{aaaa}
\begin{pmatrix}
\eta^{(11)} 
 \\
\eta^{(12)}
 \\
\eta^{(13)}
\end{pmatrix}
=   {\frac 12} \Big(
   \frac{\Omega_{12}}{z_1 - z_2} 
+\frac{\Omega_{13}}{z_1 - z_3} 
      \Big)
\begin{pmatrix}
\eta^{(1)} 
 \\
\eta^{(2)}
 \\
\eta^{(3)}
\end{pmatrix},
\qquad
\begin{pmatrix}
\eta^{(21)} 
 \\
\eta^{(22)}
 \\
\eta^{(23)}
\end{pmatrix}
=   {\frac 12} \Big(
   \frac{\Omega_{21}}{z_2 - z_1} 
+\frac{\Omega_{23}}{z_2 - z_3} 
      \Big)
\begin{pmatrix}
\eta^{(1)} 
 \\
\eta^{(2)}
 \\
\eta^{(3)}
\end{pmatrix},
\\
\notag
&
\begin{pmatrix}
\eta^{(31)} 
 \\
\eta^{(32)}
 \\
\eta^{(33)}
\end{pmatrix}
=   {\frac 12} \Big(
   \frac{\Omega_{31}}{z_3 - z_1} 
+\frac{\Omega_{33}}{z_3 - z_2} 
      \Big)
\begin{pmatrix}
\eta^{(1)} 
 \\
\eta^{(2)}
 \\
\eta^{(3)}
\end{pmatrix},
\qquad
\eta^{(1)}+\eta^{(2)}+ \eta^{(3)} =0.
\eean

\end{thm}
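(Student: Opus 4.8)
The plan is to exploit that the gradient $\nabla T_s$ solves the KZ system \eqref{KZZ} modulo $p^s$, to divide the resulting congruences by $T_s$, and to pass to the limit $s\to\infty$ using the uniform convergence already supplied by Theorem \ref{thm ddT}.

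First I would put the mod-$p^s$ statement into a convenient shape. Extract the coefficient of $t^{p^s-1}$ from the identity \eqref{M2}: its right-hand side is the $t$-derivative $\der\Psi_s^i/\der t$, so its coefficient of $t^{p^s-1}$ is $p^s$ times the coefficient of $t^{p^s}$ in $\Psi_s^i$ and hence lies in $p^s\Z_p[z]$. Writing $I_s=(I_{s,1},I_{s,2},I_{s,3})$ for the vector \eqref{I_s} and using $\frac{p^s-1}2\equiv-\frac12\pmod{p^s}$, this says
\be
\der_{z_i}I_s-H_i(z)\,I_s\in p^s\Z_p[z],\qquad i=1,2,3,
\ee
where the combination $H_i(z)I_s$ is the one produced by the left-hand side of \eqref{M2}; its apparent poles along $z_i=z_j$ are spurious (each $\Om_{ij}I_s$ vanishes on $z_i=z_j$ by the symmetry of $\Phi_s$), so no division by $z_i-z_j$ is actually involved. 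The same extraction in \eqref{M1} gives $I_{s,1}+I_{s,2}+I_{s,3}\in p^s\Z_p[z]$. By \eqref{nqs} we have $\nabla T_s=\frac{1-p^s}2\,I_s$, and $\frac{1-p^s}2$ is a $p$-adic unit because $p$ is odd, so
\be
\der_{z_i}\der_{z_j}T_s-\bigl(H_i(z)\,\nabla T_s\bigr)_j\in p^s\Z_p[z],
\ee
and likewise $\der_{z_1}T_s+\der_{z_2}T_s+\der_{z_3}T_s\in p^s\Z_p[z]$.

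Next I would divide by $T_s$. On $\frak D$ one has $|T_s(z)|_p=1$ for every $s\ge1$ — by Lucas' theorem $\bar T_s\equiv\bar T_1(z)\bar T_1(z^p)\cdots\bar T_1(z^{p^{s-1}})\pmod p$, exactly as in the proof of Theorem \ref{thm LT} — so $T_s(z)$ is invertible in $\Z_p$ there. Dividing the relations above by $T_s$ gives, on $\frak D$,
\be
\eta^{(ij)}_s(z)\equiv\bigl(H_i(z)\,(\eta^{(1)}_s,\eta^{(2)}_s,\eta^{(3)}_s)\bigr)_j\pmod{p^s},
\ee
together with $\eta^{(1)}_s+\eta^{(2)}_s+\eta^{(3)}_s\equiv0\pmod{p^s}$, where again the bracketed expression is the pole-free combination of \eqref{M2}. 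By Theorem \ref{thm ddT} the sequences $(\eta^{(i)}_s)$ and $(\eta^{(ij)}_s)$ converge uniformly on $\frak D$ to the analytic functions $\eta^{(i)}$ and $\eta^{(ij)}$, so letting $s\to\infty$ turns these congruences into exact equalities. On the locus where the $z_i$ are pairwise distinct the pole-free combination is just $H_i(z)$ applied to $(\eta^{(1)},\eta^{(2)},\eta^{(3)})$, which is precisely the system \eqref{eq eta}; on the remaining locus \eqref{eq eta} is read after multiplication by $\prod_{k<l}(z_k-z_l)$, and the cleared identity extends to all of $\frak D$ by continuity, both sides being uniform limits of rational functions regular on $\frak D$. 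The final equation of \eqref{eq eta} is the relation \eqref{rel eta} already proved in Theorem \ref{thm ddT}.

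The only genuine obstacle is the bookkeeping with the denominators $z_i-z_j$ of the connection matrices $H_i(z)$, which need not be $p$-adic units on $\frak D$: the safe route is never to invert $z_i-z_j$, to keep instead the polynomial combination that \eqref{M2} hands us, to divide only by $T_s$ (a genuine unit on $\frak D$), and to interpret \eqref{eq eta} after multiplication by $\prod_{k<l}(z_k-z_l)$. Everything else is the limit machinery already set up in Theorems \ref{thm LT} and \ref{thm ddT}.
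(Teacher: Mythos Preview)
Your argument is correct and is exactly the approach the paper takes: its proof is the single sentence ``The theorem follows from Theorems \ref{thm 7.3} and \ref{thm ddT}'' (the printed self-reference to \ref{thm lim} is a typo), i.e.\ combine the fact that $\nabla T_s$ solves the KZ system modulo $p^s$ with the uniform convergence of $\eta^{(i)}_s,\eta^{(ij)}_s$ and pass to the limit. Your version simply spells out the details the paper suppresses, in particular the care with the denominators $z_i-z_j$; note that since \eqref{eq eta} only makes sense off the diagonals, your extra continuity step clearing $\prod_{k<l}(z_k-z_l)$ is not strictly needed, but it does no harm.
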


\begin{proof}
The theorem follows from Theorems \ref{thm 7.3} and \ref{thm lim}.
\end{proof}

\begin{thm}
\label{thm nonzero}

The column vector  
\bean
\label{vec eta}
\vec \eta(z):=(\eta^{(1)}(z), \eta^{(2)}(z), \eta^{(3)}(z))
\eean
is  nonzero at every point $z\in\frak D$.

\end{thm}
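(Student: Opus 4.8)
The plan is to bypass the differential-equation machinery of Theorems \ref{thm ddT}--\ref{thm lim} and instead extract an \emph{exact} scalar identity among $\eta^{(1)},\eta^{(2)},\eta^{(3)}$ that already excludes the value $(0,0,0)$; this identity comes from the homogeneity of the master polynomial.

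First I would note that $\Phi_s(\la t,\la z_1,\la z_2,\la z_3)=\la^{3(p^s-1)/2}\Phi_s(t,z)$, so that the coefficient $T_s(z)$ of $t^{p^s-1}$ in $\Phi_s(t,z)$ is homogeneous of degree $\tfrac{3(p^s-1)}2-(p^s-1)=\tfrac{p^s-1}2$ in $z=(z_1,z_2,z_3)$. Euler's identity then gives, identically in $z$,
\[
z_1\frac{\der T_s}{\der z_1}+z_2\frac{\der T_s}{\der z_2}+z_3\frac{\der T_s}{\der z_3}=\frac{p^s-1}2\,T_s(z).
\]
On $\frak D$ we have $|T_s(z)|_p=1$ (by Lucas' theorem, exactly as in the proof of Theorem \ref{thm LT}), while by \eqref{nqs} each $\der T_s/\der z_i=\tfrac{1-p^s}2\,I_{s,i}$ has coefficients in $\Z_p$. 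Hence $\eta^{(i)}_s(z)=(\der T_s/\der z_i)(z)/T_s(z)\in\Z_p$ for $z\in\frak D$, and dividing the displayed identity by $T_s(z)$ yields
\[
z_1\eta^{(1)}_s(z)+z_2\eta^{(2)}_s(z)+z_3\eta^{(3)}_s(z)=\frac{p^s-1}2\qquad\text{for all }z\in\frak D.
\]

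Now I would let $s\to\infty$. By Theorem \ref{thm ddT} each sequence $(\eta^{(i)}_s)_{s\ge1}$ converges uniformly on $\frak D$ to $\eta^{(i)}$, so passing to the limit in the last display (and using $p^s\to0$ in $\Z_p$) produces the exact relation
\[
z_1\eta^{(1)}(z)+z_2\eta^{(2)}(z)+z_3\eta^{(3)}(z)=-\tfrac12\qquad\text{for all }z\in\frak D.
\]
Since $p$ is odd, $|-\tfrac12|_p=1\ne0$, so the left-hand side never vanishes on $\frak D$; in particular $(\eta^{(1)}(z),\eta^{(2)}(z),\eta^{(3)}(z))$ cannot be the zero vector at any $z\in\frak D$, which is the assertion of the theorem.

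The argument has no serious obstacle; the only point requiring care is the justification that the pointwise limit of the \emph{values} $\eta^{(i)}_s(z)$ agrees with the analytic limit function $\eta^{(i)}$ and that the identity survives the limit --- this is guaranteed by the $p$-adic integrality of $\eta^{(i)}_s$ on $\frak D$ (from $|T_s|_p=1$ on $\frak D$ and $\der T_s/\der z_i\in\Z_p[z]$) together with the uniform convergence from Theorem \ref{thm ddT}. The homogeneity/Euler step itself is purely formal; alternatively one may obtain the same relation by applying the operator ``coefficient of $t^{p^s-1}$'' to the Euler identity $\sum_i z_i\der_{z_i}\Phi_s+t\,\der_t\Phi_s=\tfrac{3(p^s-1)}2\Phi_s$ and using that the coefficient of $t^{p^s-1}$ in $t\,\der_t\Phi_s$ equals $(p^s-1)T_s$.
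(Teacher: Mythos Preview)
Your argument is correct and takes a genuinely different route from the paper.

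The paper argues by contradiction through the differential system: from the KZ-type relations
\eqref{KZ eta} one sees that if $\vec\eta(a)=0$ at some $a\in\frak D$ then all partial
derivatives of $\vec\eta$ vanish at $a$, hence $\vec\eta\equiv0$ by analyticity, contradicting the explicit
value $\eta^{(2)}(1,0,0)=F'(0)/F(0)=1/4$ obtained from Theorem~\ref{thm LT}. Your proof bypasses
all of this: from the homogeneity of $T_s$ (degree $(p^s-1)/2$) and Euler's identity you obtain
$\sum_i z_i\,\eta^{(i)}_s(z)=(p^s-1)/2$ on $\frak D$, and the uniform convergence from
Theorem~\ref{thm ddT} lets you pass to the limit to get
$z_1\eta^{(1)}(z)+z_2\eta^{(2)}(z)+z_3\eta^{(3)}(z)=-\tfrac12$ on $\frak D$, which forbids
$\vec\eta(z)=0$ outright.

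What each approach buys: yours is shorter and more elementary, needing only Theorem~\ref{thm ddT}
(not Theorem~\ref{thm lim}), and it yields the bonus identity
$\sum_i z_i\eta^{(i)}=-\tfrac12$ complementing $\sum_i\eta^{(i)}=0$ from \eqref{rel eta}. It also
sidesteps a delicate point in the paper's argument, namely that ``all derivatives vanish at $a$
implies identically zero on $\frak D$'' strictly speaking gives vanishing only on the polydisc
component of $a$, which need not contain $(1,0,0)$. The paper's approach, on the other hand, is
more structural and would transfer to KZ situations lacking an Euler-type scaling symmetry.
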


\begin{proof}

On the one hand, if $\vec \eta(a) =0$ for some $a\in\frak D$, then all derivatives of
$\vec \eta(z)$ at $a$ are equal to zero. This follows from the first three equations in
\eqref{eq eta} written as
\bean
\label{KZ eta}
\frac\der{\der z_i} \vec \eta = (H_i -\eta^{(i)})\,\vec\eta,\qquad i=1,2,3.
\eean
Hence $\vec \eta(z)$ equals zero identically on $\frak D$. 
On the other hand, $\eta^{(2)}(1,0,0) = F'(0)/F(0) = 1/4$ by Theorem \ref{thm LT}.
This contradiction implies the theorem. 
\end{proof}

\subsection{Subbundle $\mc L  \,\to\,  \frak D$} 

Denote $W=\{(I_1,I_2,I_3)\in \Q_p^3\ |\ I_1+I_2+I_3=0\}$.  
The differential operators $\nabla^{\on{KZ}}_i$, $i=1,2,3$, define a connection on
the trivial bundle $W\times \frak D \to \frak D$,
called the KZ connection. The KZ connection is flat, 
\bea
\big[\nabla^{\on{KZ}}_i, \nabla^{\on{KZ}}_j\big]=0 \qquad \forall\,i,j.
\eea
The flat sections of the KZ connection are solutions of  system  \eqref{KZZ} of KZ equations.
 
\vsk.2>
For any $a\in \frak D$ let $\mc L_a \subset W$ be the 
one-dimensional vector subspace generated by $\vec \eta(a)$. Then
\bea
\mc L := \bigcup\nolimits_{a\in  \frak D}\,\mc L_a \,\to\,  \frak D
\eea
is an analytic line subbundle of the trivial bundle
$W\times \frak D \to \frak D$.

\begin{thm}
\label{thm inv} 

The subbundle 
$\mc L  \,\to\,  \frak D$   is invariant with respect to the KZ connection.
In other words, if $s(z)$ is any section of $\mc L  \,\to\,  \frak D$, then
sections $\nabla_i s(z)$, $i=1,2,3$, also are sections of $\mc L  \,\to\,  \frak D$.

\end{thm}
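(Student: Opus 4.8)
The plan is to exploit Theorem \ref{thm nonzero}: since $\vec\eta(z)$ is nowhere zero on $\frak D$, it serves as a global analytic frame for the line subbundle $\mc L\to\frak D$, so every analytic section $s$ of $\mc L\to\frak D$ has the form $s(z)=g(z)\,\vec\eta(z)$ for a unique $p$-adic analytic scalar function $g$ on $\frak D$. It therefore suffices to check that $\nabla^{\on{KZ}}_i\bigl(g\,\vec\eta\bigr)$ is again a scalar multiple of $\vec\eta$ for each $i=1,2,3$.

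First I would record the identity
\be
\frac{\der}{\der z_i}\,\vec\eta=\bigl(H_i-\eta^{(i)}\bigr)\,\vec\eta,\qquad i=1,2,3,
\ee
which is precisely \eqref{KZ eta} from the proof of Theorem \ref{thm nonzero}: it follows by combining the matrix relations \eqref{eq eta} of Theorem \ref{thm lim} — which identify the column vector $(\eta^{(i1)},\eta^{(i2)},\eta^{(i3)})$ with $H_i\,\vec\eta$ — and the differentiation rule \eqref{rel eta3}, namely $\frac{\der}{\der z_i}\eta^{(j)}=\eta^{(ij)}-\eta^{(j)}\eta^{(i)}$. Granting this, the Leibniz rule gives
\bea
\nabla^{\on{KZ}}_i\bigl(g\,\vec\eta\bigr)
&=&\frac{\der g}{\der z_i}\,\vec\eta+g\,\frac{\der\vec\eta}{\der z_i}-g\,H_i\,\vec\eta
\\
&=&\frac{\der g}{\der z_i}\,\vec\eta+g\,\bigl(H_i-\eta^{(i)}\bigr)\vec\eta-g\,H_i\,\vec\eta
\\
&=&\Bigl(\frac{\der g}{\der z_i}-g\,\eta^{(i)}\Bigr)\vec\eta,
\eea
which is a section of $\mc L\to\frak D$. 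Since $\nabla^{\on{KZ}}_i$ preserves analyticity, this establishes the invariance.

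I do not expect a genuine obstacle in this last step: all the substance has already been packaged into the earlier results, and what remains is essentially formal. The only point that needs care is the logical bookkeeping — that $\mc L\to\frak D$ is an honest line subbundle admitting the frame $\vec\eta$ relies on Theorem \ref{thm nonzero}, while the precise form of \eqref{KZ eta} relies on Theorem \ref{thm lim} together with \eqref{rel eta3}. It is worth closing with the remark that this invariance is exactly the $p$-adic feature advertised in the introduction: by Theorem \ref{thm dim} and the discussion after Theorem \ref{thm LT}, the complex KZ connection of \eqref{KZZ} has no proper invariant subbundle, because its monodromy representation is irreducible, so $\mc L\to\frak D$ has no complex-analytic analogue.
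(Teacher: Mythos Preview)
Your argument is correct and follows the paper's own approach: the paper's proof is the single line ``The theorem follows from equations \eqref{KZ eta},'' and you have simply spelled out the Leibniz-rule computation that this line encodes, together with the use of Theorem \ref{thm nonzero} to write an arbitrary section as $g\,\vec\eta$.
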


\begin{proof} 
The theorem follows from equations \eqref{KZ eta}.
\end{proof}

\begin{rem}
For any  $a\in\frak D$  we may find locally a scalar analytic  function $u(z)$ such that
$u(z)\cdot\vec \eta(z)$ is a solution of the KZ equations \eqref{KZZ}. Such a function is a solution of
the system of equations $\frac {\der u}{\der z_i} = - \eta^{(i)} u$, $i=1,2,3$.
This system is compatible since $\frac {\der \eta^{(j)}}{\der z_i} = \eta^{(ij)} - \eta^{(i)}\eta^{(j)}
=
\frac {\der \eta^{(i)}}{\der z_j}$.

\end{rem}

\begin{rem}
The corresponding complex KZ connection 
does not have invariant line subbundles due to irreducibility of the monodromy of the KZ connection,
which in our case is the Gauss--Manin connection of the family $y^2=(t-z_1)(t-z_2)(t-z_3)$. 
Thus the existence of the KZ invariant line subbundle $\mc L \to \frak D$ is a
pure $p$-adic feature.

\end{rem}

\begin{rem} 
The invariant subbundles of the KZ connection over $\C$ usually are related to some additional conformal block
constructions, see \cite{FSV, SV2, V3}. Apparently the subbundle $\mc L \to \frak D$  is of a
different $p$-adic nature, cf. \cite{V4}.

\end{rem}

\begin{rem}
Following Dwork we may expect that locally at any point $a\in\frak D$, the
 solutions of the KZ equations of the form $u(z)\cdot\vec \eta(z),$ where $u(z)$ is a scalar function, 
are given at $a$ by power series in 
$z_i-a_i$, $i=1,2,3$,  bounded in their polydiscs of convergence, 
while any other local solution at $a$ is given by a power series unbounded in its polydisc of convergence,
cf. \cite{Dw} and \cite[Theorem A.4]{V4}.
\end{rem}

\subsection{Other definitions of subbundle $\mc L \to \frak D$}

\subsubsection{Line subbundle $\mc M \to \frak D$}

Define a polynomial $U_s(z)$ as the coefficient of $t^{p^s-1}$ in the master polynomial
$\Phi_s(t+z_3,z) = \big((t-(z_1-z_3))(t-(z_2-z_3))t\big)^{(p^s-1)/2}$.
We have  $U_1(z)=T_1(z)$ $\pmod{p}$, by Lucas' theorem.  
Similarly to Theorem \ref{thm T} we conclude that

\bean
\label{UU}
\phantom{aaa}
U_{s+1}(z_1,z_2,z_3)\, U_{s-1}(z_1^p,z_2^p,z_3^p) \,\equiv\,
U_{s}(z_1,z_2,z_3)\, U_{s}(z_1^p,z_2^p,z_3^p) \pmod{p^s}.
\eean

${}$

\noindent
Hence the sequence $\big(\frac {U_{s+1}(z)}{U_s(z^p)}\big)_{s\geq 1}$ uniformly converges to an analytic function 
on the domain  $\frak D$ defined in \eqref{fD T}.
The vector-valued polynomial
$\nabla  U_s(z) =\big(\frac{\der U_s}{\der z_1}, \frac{\der U_s}{\der z_2}, 
\frac{\der U_s}{\der z_3} \big)$ 
is a solution modulo $p^s$ of the KZ equations  \eqref{KZZ}; see \cite[Theorem 9.1]{V4}, 
cf. the proof of Theorem \ref{thm 7.3}. Consider the function
\bea
\vec{\mu}=(\mu^{(1)}, \mu^{(2)},\mu^{(3)}):=\lim_{s\to\infty} 
\frac{\nabla U_s}{U_s}
\eea
defined on the same domain $\frak D$. 
Similarly to the proofs of Theorems \ref{thm ddT}--\ref{thm inv} we conclude that the function
$\vec{\mu}(z)$ is nonzero on $\frak D$ and its values span an analytic line subbundle
\bea
\mc M := \bigcup\nolimits_{a\in  \frak D}\,\mc M_a \,\to\,  \frak D
\eea
of the trivial bundle $W\times \frak D\to\frak D$; \ 
here $\mc M_a\subset W$ is the one-dimensional subspace generated by $\vec\mu(a)$.
The line subbundle $\mc M  \,\to\,  \frak D$ is invariant with respect to the KZ connection.

\begin{thm}
\label{thm L1=2} 

The line bundles $\mc M  \,\to\,  \frak D$ and $\mc L  \,\to\,  \frak D$ coincide.

\end{thm}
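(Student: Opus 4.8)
The plan is to show that the two line subbundles coincide by exhibiting, for each point $a \in \frak D$, a scalar that relates the spanning vectors $\vec\eta(a)$ and $\vec\mu(a)$. The cleanest route is to trace back the definitions of $T_s(z)$ and $U_s(z)$: the polynomial $U_s(z)$ is the coefficient of $t^{p^s-1}$ in $\Phi_s(t+z_3,z)$, which is exactly the coefficient of $(t-z_3)^{p^s-1}$ in $\Phi_s(t,z)$. Expanding $\Phi_s(t,z) = \sum_{j} c_j(z)\,(t-z_3)^j$ and comparing with the standard expansion $\Phi_s(t,z) = \sum_k T_s^{[k]}(z)\, t^k$ (where $T_s = T_s^{[p^s-1]}$) via the binomial relation between the two bases $\{t^k\}$ and $\{(t-z_3)^j\}$, one gets a congruence of the shape $U_s(z) \equiv T_s(z) \pmod{\text{something}}$, or more precisely a relation that, after dividing by $U_s$ and $T_s$ and passing to the limit, forces $\vec\mu$ to be proportional to $\vec\eta$. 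The base case $U_1(z) \equiv T_1(z) \pmod p$ is already noted in the excerpt, and it localizes the two bundles over the same domain $\frak D$.

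First I would make the substitution explicit: since $\Phi_s(t+z_3,z)$ and $\Phi_s(t,z)$ differ by a translation in $t$, the coefficient-extraction operators are intertwined by the shift, and hence there is a triangular (unipotent) change of basis with $\Z_p$-coefficients relating $(U_s(z))$ to the coefficients $(T_s^{[k]}(z))$ of $\Phi_s$. Concretely, $U_s(z) = \sum_{k} \binom{k}{p^s-1} (-z_3)^{k-(p^s-1)} T_s^{[k]}(z)$ summed over $k \ge p^s-1$ (only finitely many terms survive because $\deg_t \Phi_s = 3(p^s-1)/2 < 2(p^s-1)$ forces at most $k = p^s-1$ to contribute to the top, so in fact $U_s(z)$ coincides with $T_s(z)$ up to a unit); the precise bookkeeping here is routine. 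The point is that the gradient $\nabla U_s$ and $\nabla T_s$ are then related by the product rule applied to this same finite expansion, and upon dividing by $U_s$, respectively $T_s$, and invoking $|T_s|_p = |U_s|_p = 1$ on $\frak D$ (Lucas' theorem as in the proofs of Theorems \ref{thm LT} and \ref{thm ddT}), the correction terms are $p$-adically negligible in the limit.

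Then I would take $s \to \infty$. Both $\nabla U_s/U_s$ and $\nabla T_s/T_s$ converge uniformly on $\frak D$ by the argument already used for Theorem \ref{thm ddT} (congruence \eqref{UU} for the first, \eqref{TT} for the second), to $\vec\mu$ and $\vec\eta$ respectively. The limiting identity relating them will read $\vec\mu(z) = \vec\eta(z) + \nabla(\log(U_s/T_s))$-type corrections that vanish in the limit, so $\vec\mu(z)$ and $\vec\eta(z)$ are scalar multiples of one another at every $z \in \frak D$; since both are nonzero on $\frak D$ (Theorem \ref{thm nonzero} and its analog for $\vec\mu$), the one-dimensional subspaces $\mc M_a$ and $\mc L_a$ agree for each $a$, which is exactly the assertion $\mc M = \mc L$ as subbundles.

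The main obstacle I anticipate is purely the bookkeeping in the translation step: one must verify carefully that the change of basis between $\{t^k\}$ and $\{(t-z_3)^k\}$ truncated at degree $p^s-1$ really does produce only a $p$-adic unit times $T_s$ plus terms whose logarithmic derivatives die in the limit — in particular that no $z_3$-dependence is lost and that the degree bound $3(p^s-1)/2$ is used correctly. An alternative, perhaps safer, route that avoids this entirely is to observe that $\nabla U_s$ is, by \cite[Theorem 9.1]{V4}, a solution of \eqref{KZZ} modulo $p^s$, hence its limit $\vec\mu$ satisfies the same system \eqref{eq eta} that $\vec\eta$ does; then irreducibility is not available $p$-adically, but one checks instead that $\vec\mu$ and $\vec\eta$ agree at the single point $(1,0,0)$ (both equal, up to scalar, the gradient of $\log F$ at $z_2=0$, using $U_s(1,z_2,0) \equiv P_s(z_2) \pmod p$ just as $T_s(1,z_2,0) = P_s(z_2)$), and a flat line subbundle of \eqref{KZ eta} is determined by its fiber at one point — so the two invariant line subbundles through the same line at $(1,0,0)$ must coincide. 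I would present this second argument as the primary proof, with the first as a remark.
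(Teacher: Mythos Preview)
Your second approach\,---\,which you designate as the primary proof\,---\,is exactly the paper's argument: the paper first shows (Lemma~\ref{lem 1=2}) that two KZ-invariant line subbundles agreeing at a single point of $\frak D$ must agree everywhere, and then (Lemma~\ref{lem =}) verifies that $\nabla T_s/T_s$ and $\nabla U_s/U_s$ literally coincide along the line $z_1=1$, $z_3=0$, so that $\vec\eta=\vec\mu$ there. One small correction: you invoke only $U_s(1,z_2,0)\equiv P_s(z_2)\pmod p$, but a congruence modulo $p$ for each $s$ would not suffice to match the limits; what you actually need (and what holds trivially, since the shift $t\mapsto t+z_3$ is the identity on $z_3=0$) is the \emph{exact} equality $U_s(z_1,z_2,0)=T_s(z_1,z_2,0)$, which gives $\mu^{(1)}=\eta^{(1)}$, $\mu^{(2)}=\eta^{(2)}$ on that plane, and then $\mu^{(3)}=\eta^{(3)}$ follows from $\sum_i\mu^{(i)}=\sum_i\eta^{(i)}=0$.

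Your first approach, however, contains a genuine error in the bookkeeping you flagged as ``routine'': the expansion $U_s(z)=\sum_{k\ge p^s-1}\binom{k}{p^s-1}z_3^{\,k-p^s+1}T_s^{[k]}(z)$ has $(p^s-1)/2+1$ nonzero terms (all $k$ from $p^s-1$ to $3(p^s-1)/2$), not just one, and the binomial coefficients $\binom{p^s+j}{p^s-1}$ are not all divisible by $p^s$ (for instance $\binom{p^s+p-1}{p^s-1}$ has $p$-adic valuation only $s-1$). So the claim that $U_s$ equals $T_s$ up to a unit, or that the correction terms are $p$-adically negligible, is not justified and would need substantially more work; you were right to abandon this route.
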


\begin{proof}
The proof rests on the two lemmas.

\begin{lem}
\label{lem 1=2}
The line bundles  $\mc M  \,\to\,  \frak D$ and $\mc L  \,\to\,  \frak D$ coincide, 
if there is $a\in\frak D$ such that  $\mc M_a=\mc L_a$.
\end{lem}

\begin{proof}
Let  $\mc M_a=\mc L_a$   for some $a\in\frak D$. Then
$\mc M_z = \mc L_s$ in some neighborhood of $a$,
since locally the subbundles are generated by the values of the solutions with the same
initial condition at $z=a$. Hence $\mc M_z = \mc L_z$ on $\frak D$.
\end{proof}

\begin{lem}
\label{lem =} 

For $i=1,2,3$ the functions
$\frac{\der T_s}{\der z_i}(z)\big/ T_s(z)$ and $\frac{\der U_s}{\der z_i}(z)\big/ U_s(z)$
are equal on the line $z_1=1$, $z_3=0$.
\end{lem}

Hence $\mc M=\mc L$ over the points of that line and, therefore, $\mc M_z = \mc L_z$   for $z\in \frak D$.
\end{proof}

\subsubsection{Line subbundle $\mc N\to \hat{\frak D}$}

Let $\om(x) =F'(x)/F(x)$, where $F(x) = {}_2F_1(1/2,1/2;1;x)$.
We have
$\om(x) = \lim_{s\to\infty}\frac{P_s'(x)}{P_s(x)}$ on $D_{0,1}$. 
Introduce new variables
\bean
\label{id co}
u_1=z_1-z_3, \qquad
u_2=\frac{z_2-z_3}{z_1-z_3},
\qquad
u_3=z_1+z_2+z_3,
\eean
and a vector-valued function
\bean
\label{K/K}
\vec\om(u) 
 = \frac1{u_1}
\big(\!- 1/2 - \om(u_2) u_2,\, \om(u_2),\, 1/2 + \om(u_2)(u_2-1)\big)\,.
\eean
Define 
\bea
\hat{\frak D}_0 = \{(z_1,z_2,z_3)\in \Q_p^3\ |\ z_i\ne z_j\,\ \forall i\ne j\}.
\eea
For any $\si=(i,j,k)\in S_3$ define
\bea
&&
\phantom{aaaaa}
\hat {\frak D}_1^\si =\Big\{(z_1,z_2,z_3)\in \hat {\frak D}_0\  \Big|\ \
\frac{z_j-z_k}{z_i-z_k}\in \Z_p,\ \Big| g\Big(\frac{z_j-z_k}{z_i-z_k}\Big)\Big|_p = 1\Big\},
\\
&&
\hat{\frak D}_2^\si = \Big\{(z_1,z_2,z_3)\in \hat {\frak D}_0\  \Big|\ 
\frac{z_i-z_k}{z_j-z_k} \in \hat {\frak D}_1^\si\Big\},
\qquad
\hat{\frak D}^\si= \hat{\frak D}_1^\si\cup\hat {\frak D}_2^\si,
\qquad
\hat{\frak D} = \sum_{\si\in S_3}\hat{\frak D}^\si,
\eea
where   $g(\la)$ is the Igusa polynomial in \eqref{Igu}.

Using Dwork's results in \cite{Dw}, 
it is shown in \cite[Appendix]{V4} that the values of the analytic continuation of the function $\vec\om(u)$
generate a  line bundle $\mc N\to \hat{\frak D}$  invariant with respect to the KZ connection.

\begin{thm}
\label{thm hat=tilde}

The line bundles $\mc M\to \frak D$ and $\mc N \to \hat{\frak D}$ coincide on 
$\frak D\cap \hat{\frak D}$.

\end{thm}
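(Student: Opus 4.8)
The plan is to reduce the statement to the coincidence of the two line bundles along a single curve inside $\frak D\cap\hat{\frak D}$, using that both $\mc M$ and $\mc N$ are invariant under the KZ connection. Since a flat section of the KZ connection is determined by its value at one point, two KZ-invariant analytic line subbundles that share one fibre must share fibres on a whole neighborhood, and hence — by analytic continuation, exactly as in Lemma~\ref{lem 1=2} — on all of $\frak D\cap\hat{\frak D}$. So it suffices to produce one curve on which the fibres of $\mc M$ and $\mc N$ agree.

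The natural choice is the punctured disc $\frak E\setminus\{(1,0,0)\}=\{(1,z_2,0)\mid 0<|z_2|_p<1\}$. It lies in $\frak D$ by Lemma~\ref{lem KZ exp}, and it lies in $\hat{\frak D}$ because for such a point the coordinate $u_2=(z_2-z_3)/(z_1-z_3)$ of \eqref{id co} equals $z_2$, which reduces to $0$ modulo $p$, so $|g(z_2)|_p=|g(0)|_p=1$. Along this curve I would compute the generating vector $\vec\mu=\lim_{s\to\infty}\nabla U_s/U_s$ of $\mc M$ explicitly. The master polynomial $\Phi_s(t+z_3,z)=\big((t-(z_1-z_3))(t-(z_2-z_3))\,t\big)^{(p^s-1)/2}$ depends on $z$ only through $v_1=z_1-z_3$ and $v_2=z_2-z_3$, so $\der U_s/\der z_1=\der U_s/\der v_1$, $\der U_s/\der z_2=\der U_s/\der v_2$, and $\der U_s/\der z_3=-\der U_s/\der v_1-\der U_s/\der v_2$; moreover $U_s$, as a polynomial in $(v_1,v_2)$, is homogeneous of degree $(p^s-1)/2$, so Euler's identity gives $v_1\,\der U_s/\der v_1+v_2\,\der U_s/\der v_2=\tfrac{p^s-1}2\,U_s$. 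Restricting to $v_1=1$, $v_2=z_2$ and using $U_s(1,z_2,0)=P_s(z_2)$, which is immediate from \eqref{mp1/2} and \eqref{Ps}, one gets $\der U_s/\der v_2=P_s'(z_2)$ and $\der U_s/\der v_1=\tfrac{p^s-1}2P_s(z_2)-z_2P_s'(z_2)$ on the curve.

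Dividing by $U_s=P_s(z_2)$ and passing to the limit is the heart of the matter: in $\Z_p$ one has $\tfrac{p^s-1}2\to-\tfrac12$, while $P_s'(z_2)/P_s(z_2)\to\om(z_2)$ on $D_{0,1}$ by the limit formula recorded at the start of the subsection introducing $\mc N$. This yields
\[
\vec\mu(1,z_2,0)=\Big(-\tfrac12-\om(z_2)\,z_2,\ \om(z_2),\ \tfrac12+\om(z_2)\,(z_2-1)\Big),
\]
which is precisely $\vec\om(u)$ of \eqref{K/K} evaluated at $u_1=1$, $u_2=z_2$. Hence $\mc M$ and $\mc N$ have the same fibre at every point of $\frak E\setminus\{(1,0,0)\}$, and the invariance argument of the first paragraph then finishes the proof.

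The step I expect to be the main obstacle is not the computation — which is routine once the translation invariance and the homogeneity of $\Phi_s(t+z_3,z)$ are observed — but the passage from the curve to all of $\frak D\cap\hat{\frak D}$: one must argue that the identity $\mc M_z=\mc N_z$ established on $\frak E\setminus\{(1,0,0)\}$ propagates, either via a connectedness input for $\frak D\cap\hat{\frak D}$ or by applying the $p$-adic identity principle to the analytic function measuring the discrepancy of the two subbundles together with flatness of the KZ connection along suitable paths. This is the same subtlety that already underlies Lemma~\ref{lem 1=2}, and I would treat it in the same spirit.
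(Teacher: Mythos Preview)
Your argument is correct, but it takes a detour that the paper avoids. The two observations you make\,---\,that $U_s$ depends only on $v_1=z_1-z_3$, $v_2=z_2-z_3$ and is homogeneous of degree $(p^s-1)/2$ in these variables\,---\,combine to give the factorisation
\[
U_s(z)=(z_1-z_3)^{(p^s-1)/2}\,P_s\!\Big(\frac{z_2-z_3}{z_1-z_3}\Big)
\]
valid for \emph{all} $z$, not only on the curve $z_1=1$, $z_3=0$. The paper uses precisely this factorisation to write $\nabla U_s/U_s$ explicitly in the coordinates $u_1,u_2$ of \eqref{id co} at every point, and then passes to the limit to obtain $\vec\mu=\vec\om$ on the whole of $\frak D\cap\hat{\frak D}$ directly. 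Your restriction to $\frak E\setminus\{(1,0,0)\}$ and the subsequent propagation via KZ-invariance (the step you single out as the likely obstacle) are therefore unnecessary: had you not specialised $v_1=1$, $v_2=z_2$ in your Euler-identity computation you would have arrived exactly at the paper's formula. The benefit of the paper's route is that it bypasses the ``spread from one fibre'' argument entirely; what your approach buys is slightly lighter bookkeeping on the single curve, but at the cost of the extra global step you rightly worry about.
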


Thus we identified the line bundles
$\mc L\to \frak D$,  $\mc M\to \frak D$, and $\mc N \to \hat{\frak D}$
over $\frak D\cap \hat{\frak D}$.

\begin{proof}
We have
\bea
&
U_s(z) = (z_1-z_3)^{(p^s-1)/2} P_s\big(\frac{z_2-z_3}{z_1-z_3}\big),
\qquad
\frac{\der U_s}{\der z_2} = (z_1-z_3)^{(p^s-1)/2-1} P_s'\big(\frac{z_2-z_3}{z_1-z_3}\big)
\\
&
\frac{\der U_s}{\der z_1} = \frac{p^s-1}2\,\on{const}\,(z_1-z_3)^{(p^s-1)/2-1} P_s\big(\frac{z_2-z_3}{z_1-z_3}\big)-
\on{const}\,(z_1-z_3)^{(p^s-1)/2-1} \frac{z_2-z_3}{z_1-z_3} P_s'\big(\frac{z_2-z_3}{z_1-z_3}\big).
\eea
Hence
\bea
&
\frac 1{U_s(z)}\Big(\frac{\der U_s}{\der z_1}, \frac{\der U_s}{\der z_2}, 
\frac{\der U_s}{\der z_3}\Big) =
\frac 1{u_1}\Big(\frac{p^s-1}2 - u_2 \frac{P_s'(u_2)}{P_s(u_2)},\,\frac{P_s'(u_2)}{P_s(u_2)},
\,-\frac{p^s-1}2 + (u_2-1) \frac{P_s'(u_2)}{P_s(u_2)}\Big) .
\eea
Clearly  the limit of this vector equals $\vec\om(u)$ as $s\to\infty$. The theorem is proved.
\end{proof}

\section{Concluding remarks}
\label{sec 8}

\subsection{Conjectural stronger congruences for $\bar P_s(x)$}
\label{sec 8.1}

By Theorem \ref{conj B} we have for polynomials $\bar P_s(x) := (-1)^{(p^s-1)/2} P_s(x)$:
\bea
\bar P_{4}(x) \bar P_{2}(x^p) - \bar P_{3}(x) \bar P_{3}(x^p) \equiv 0\pmod{p^3}.
\eea
In particular, for the coefficient of $x^{N_0+N_1p+N_2p^2 +N_3p^3}$
in $\bar P_4(x) \bar P_2(x^p) -\bar P_3(x)\bar P_3(x^p)$ we have
\bean
\label{42=33}
&&
\sum_{\satop{k_1+l_1=N_1}{k_2+l_2=N_2}}
\bigg(\binom{(p^4-1)/2}{N_0+k_1p+k_2p^2+N_3p^3}^2
\binom{(p^2-1)/2}{l_1+l_2p}^2 -
\\
\notag
&&
\phantom{aaaaaaaa}
-
\binom{(p^3-1)/2}{N_0+k_1p+k_2p^2}^2
\binom{(p^3-1)/2}{l_1+l_2p+N_3p^2}^2\,\bigg) \equiv 0 \pmod{p^3}.
\eean
Computer experiments show that this sum can be split into subsums with at most
four terms so that each a subsum is divisible by $p^3$.
More precisely, let $0\leq a, b, \,c,c',d,d'\leq p-1$  be integers.
Define
\bea
&
A(a,b;c,c';d,d') =   \binom{(p^4-1)/2}{a+cp+dp^2+bp^3}^2
\binom{(p^2-1)/2}{c'+d'p}^2 
-\binom{(p^3-1)/2}{a+cp+dp^2}^2
\binom{(p^3-1)/2}{c'+d'p+bp^2}^2
\eea
and
\bean
\label{1}
&&
B(a,b;c,c';d,d') =  
 \Sym A(a,b;c,c';d,d'):=
\\
\notag
&&
\phantom{aaa}
:=A(a,b;c,c';d,d') + A(a,b;c',c;d,d')+A(a,b;c,c';d',d)+A(a,b;c',c;d',d).
\eean
We expect that
the integer $B(a,b;c,c';d,d')$ is divisible by $p^3$.
\vsk.2>

More generally, define
\bea
k=(k^{(1)}, \dots,k^{(s)}),\qquad k^{(i)}=(k^{(i)}_1, k^{(i)}_2),
\eea
\bea
&
A(a,b;k) = \binom{(p^{s+2}-1)/2}{a+\sum_{i=1}^sk^{(i)}_1p^i+bp^{s+1}}^2
\binom{(p^{s}-1)/2}{\sum_{i=1}^sk^{(i)}_2p^{i-1}}^2 
-\binom{(p^{s+1}-1)/2}{a+\sum_{i=1}^sk^{(i)}_1p^i}^2
\binom{(p^{s+1}-1)/2}{\sum_{i=1}^sk^{(i)}_2p^{i-1}+bp^{s}}^2.
\eea
Set
\bea
B(a,b;k) = \Sym A(a,b;k),
\eea
where $\Sym$ denotes the symmetrization with respect to the index $j$
in $k^{(i)}_j$ in each group $k^{(i)}=(k^{(i)}_1,k^{(i)}_2)$. Thus the symmetrization has $2^s$ summands; the case $s=2$ of this symmetrization is displayed in \eqref{1}.

\begin{conj}
\label{conj strong}
The integer  $B(a,b;k)$ is divisible by $p^{s+1}$.

\end{conj}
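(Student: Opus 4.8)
The plan is to realize $B(a,b;k)$ as a constant-term expression to which Theorem \ref{thm cg} applies, with the extra symmetrization being the combinatorial mechanism that upgrades the congruence from $p^s$ to $p^{s+1}$. First I would record the constant-term presentation behind $\bar P_m$: with $h(t,x)=t^{1-p}\bigl((t-1)(t-x)\bigr)^{(p-1)/2}=\bigl((t-1)(1-x/t)\bigr)^{(p-1)/2}$ one has $\bar P_m(x)=\on{CT}_t\bigl[h(t,x)h(t,x)^p\cdots h(t,x)^{p^{m-1}}\bigr]$, exactly as in the proof of Theorem \ref{conj B}. More precisely, tracking the powers of $t$ shows that the binomial products appearing in $A(a,b;k)$ are individual coefficients $C_{m,j}(x)$ of the Laurent polynomials $\hat\Phi_m(t,x)$ from Section \ref{sec 4}; so $A(a,b;k)$ is, up to signs, a difference of two products of the form $C_{s+2,\,jp}(x)\,C_{s,\,-j}(x^p)-C_{s+1,\,jp}(x)\,C_{s+1,\,-j}(x^p)$ for the index $j$ determined by $a,b$ and the $k^{(i)}_1$. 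By Theorem \ref{thm C_k} (applied with the shift by two levels rather than one, which is the same argument: the relevant $(s+2)$-tuple $\hat\Phi_1,t^{-k_0}\hat\Phi_1,\dots,t^{-k_{s-1}}\hat\Phi_1,\hat\Phi_1,\hat\Phi_1$ is admissible by Definition \ref{defn2}), each such difference is already $\equiv 0\pmod{p^s}$. So the content of the conjecture is the \emph{one extra power of $p$}.

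Next I would isolate where the extra power comes from. Re-examining the proof of Theorem \ref{thm cg}, the congruence modulo $p^{l(a)}$ holds because the surviving terms on the two sides are in bijection and equal, while all other terms are divisible by $p^{l(a)}$. To gain one more power, I would show that, after the symmetrization $\Sym$ over the pairs $k^{(i)}=(k^{(i)}_1,k^{(i)}_2)$, the "error terms" organize into groups each of which is divisible by $p^{l(a)+1}=p^{s+1}$. Concretely, swapping $k^{(i)}_1\leftrightarrow k^{(i)}_2$ corresponds to interchanging the roles of $(t-1)$ and $(1-x/t)$ in the $i$-th factor $\hat\Phi_1$ — equivalently, using Lemma \ref{lem ptp} ($\hat\Phi_s(x/t,x)=\hat\Phi_s(t,x)$, i.e.\ $C_{s,-j}=x^jC_{s,j}$), it corresponds to the substitution $t\mapsto x/t$ in that slot. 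The symmetrized sum $B(a,b;k)$ is therefore the constant term of an expression built from products over $S^{\on{ind}}$-decompositions (as in Lemma \ref{lem decomp} and Corollary \ref{cor Cp}) that is manifestly invariant under each of these local involutions; and a term $R_{m,\la}$ with $|m|=l(a)-1$ (the minimal-weight indecomposable contribution, which is the only obstruction to divisibility by $p^{l(a)+1}$) cannot be fixed by the involution unless its partner term coincides with it, forcing the offending pieces to pair up and sum to something divisible by the next power of $p$.

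The hard part will be making this pairing argument precise: I need to show that among the terms $R_{m,\la}$ contributing to (the constant term of) $\tilde\la$ with $p$-adic valuation exactly $l(a)-1$, the symmetrization acts freely — i.e.\ no such minimal term is self-symmetric — so that they cancel in pairs modulo $p^{l(a)+1}$, while the genuinely surviving (bijection) terms of Theorem \ref{thm cg} contribute $0$ to $B$ because they already match exactly on both sides and are unaffected by the symmetry. Establishing "no minimal self-symmetric term" will require a close look at Lemma \ref{lem ind}: a term of weight exactly $k-1$ in an indecomposable $k$-tuple is extremal, and I expect that extremality pins down the positions of the nonzero $m_j$ tightly enough that a genuine $k^{(i)}_1\leftrightarrow k^{(i)}_2$ swap always moves it. One then sums over all decompositions $\la=\la^1*\cdots*\la^r$ and over $r$, checking that the pairing is compatible with concatenation (here Lemma \ref{lem un dec} on unique indecomposition and Lemma \ref{lem 1pt} on the multiplicativity of constant terms over admissible tuples are the technical workhorses). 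I would carry out the bookkeeping first for $s=2$ — the case $B(a,b;c,c';d,d')$ in \eqref{1}, with its four-term symmetrization — to fix the combinatorial pattern, and then run the induction on $s$ in parallel with the structure of the proof of Theorem \ref{thm cg}.
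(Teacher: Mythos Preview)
The paper does \emph{not} prove this statement: it is explicitly recorded as a conjecture, supported only by computer experiments and by a verification of the case $s=1$ via Granville's binomial congruences~\cite{Gr}. So there is no ``paper's own proof'' to compare your plan against, and your proposal should be judged on its own merits as an attempted attack on an open problem.

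On that score there is a genuine gap. Your identification of $A(a,b;k)$ with a difference $C_{s+2,jp}(x)\,C_{s,-j}(x^p)-C_{s+1,jp}(x)\,C_{s+1,-j}(x^p)$ is not right: the $C_{m,j}(x)$ are \emph{polynomials} in $x$, whereas each $A(a,b;k)$ is a single integer (a product of squared binomial coefficients). The congruence of Theorem~\ref{thm C_k}, when read off on a fixed power $x^{N}$, does not reduce to a single $A$-term but to a sum over all splittings $N=\sum_i k^{(i)}_1p^i+\sum_i k^{(i)}_2p^{i-1}+\cdots$, which is precisely the full coefficient already handled by Theorem~\ref{conj B}. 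In other words, the ghost/constant-term machinery of Section~\ref{sec 2} lives at the level of polynomials, and passing to one coefficient destroys exactly the structure that makes Theorem~\ref{thm cg} work; the conjecture asks for something finer. Your proposed remedy, the symmetrization pairing, is the whole content of the problem and you have not shown why the ``minimal-weight indecomposable terms'' are moved freely by the swaps $k^{(i)}_1\leftrightarrow k^{(i)}_2$\,---\,indeed, the involution $t\mapsto x/t$ in Lemma~\ref{lem ptp} acts on the Laurent polynomial globally, not slot-by-slot, so it does not obviously model a swap in a single position $i$. That the authors had to invoke Granville's explicit mod~$p^2$ binomial identities already for $s=1$ suggests that an argument using only the admissibility/ghost framework of Section~\ref{sec 2} is unlikely to suffice without substantial new input.
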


This conjecture is supported by computer experiments and is checked for $s=1$ using \cite{Gr}.

\subsection{Dwork crystals}

After the first draft of this preprint was posted, Beukers and Vlasenko kindly pointed out that
some of the congruences in Sections~\ref{sec 4} and \ref{sec 5} can be deduced from the general principles in their papers~\cite{BV} and \cite{Vl}.
For example, the coefficient $A_s(t)$ of $(xy)^{p^s-1}$ in the polynomial $(y^2-x(x-1)(x-t))^{p^s-1}$ is equal to
\bea
(-1)^{(p^s-1)/2}\binom{p^s-1}{\frac{p^s-1}2}P_s(x),
\eea
where the polynomial $P_s(x)$ is defined in \eqref{Ps}, hence the congruence
\bea
A_{s+1}(t)A_{s-1}(t^p) \equiv A_s(t)A_s(t^p)  \pmod{p^s}
\eea
which follows from \cite[Theorem~I~(ii)]{Vl} is equivalent to the one in Theorem~\ref{conj B}.
A similar analysis of the coefficient of $(xy^2)^{p^s-1}$ in $(y^3-x(x-1)(x-t)^2)^{p^s-1}$ allows one to produce a different proof of Theorem~\ref{th-QR1} using the main results in~\cite{BV}.
Notice that these arguments require introducing an extra variable, $y$, into the story and make it difficult (if possible)
to establish results like Theorems \ref{thm C_k}, \ref{th-QR2} and congruences \eqref{4132}--\eqref{3232}.
However, there is more than these applications in the methodology of \cite{BV}:
one notable example is the connection of the unit root, which is read off as the $p$-adic limit, with the local zeta function\,---\,see \cite[Part~I, Appendix~A]{BV} for details.

\bigskip

\end{document}